\documentclass[12pt,oneside,english]{amsart}
\usepackage[T1]{fontenc}
\usepackage[latin9]{inputenc}
\usepackage[a4paper]{geometry}
\geometry{verbose,tmargin=3cm,bmargin=3cm,lmargin=2cm,rmargin=2cm}
\usepackage{babel}
\usepackage{prettyref}
\usepackage{mathrsfs}
\usepackage{mathtools}
\usepackage{amstext}
\usepackage{amsthm}
\usepackage{amssymb}
\usepackage{esint}
\usepackage[unicode=true,
 bookmarks=false,
 breaklinks=false,pdfborder={0 0 1},backref=section,colorlinks=false]
 {hyperref}

\makeatletter
\numberwithin{equation}{section}
\numberwithin{figure}{section}
\theoremstyle{plain}
\newtheorem{thm}{\protect\theoremname}
\theoremstyle{plain}
\newtheorem{cor}[thm]{\protect\corollaryname}
\theoremstyle{plain}
\newtheorem{lem}[thm]{\protect\lemmaname}
\theoremstyle{plain}
\newtheorem{prop}[thm]{\protect\propositionname}
\theoremstyle{remark}
\newtheorem{rem}[thm]{\protect\remarkname}


\usepackage{babel}

\makeatother

\providecommand{\corollaryname}{Corollary}
\providecommand{\lemmaname}{Lemma}
\providecommand{\propositionname}{Proposition}
\providecommand{\remarkname}{Remark}
\providecommand{\theoremname}{Theorem}

\begin{document}
\title[Eta remainder]{Hyperbolicity, irrationality exponents and the eta invariant  }
\author{Nikhil Savale}
\begin{abstract}
We consider the remainder term in the semiclassical limit formula
of \cite{Savale-Gutzwiller} for the eta invariant on a metric contact
manifold, proving in general that it is controlled by volumes of recurrence
sets of the Reeb flow. This particularly gives a logarithmic improvement
of the remainder for Anosov Reeb flows, while for certain elliptic
flows the improvement is in terms of irrationality measures of corresponding
Floquet exponents.
\end{abstract}

\address{Universität zu Köln, Mathematisches Institut, Weyertal 86-90, 50931
Köln, Germany}
\email{nsavale@math.uni-koeln.de}
\thanks{N. S. is partially supported by the DFG funded project CRC/TRR 191.}
\maketitle

\section{Introduction}

The eta invariant of Atiyah-Patodi-Singer \cite{APSI} is formally
the signature of a Dirac operator and appears as a correction term
in the index theorem for manifolds with boundary. Much like the signature
of a matrix, it is not in general continuous in the operator making
it difficult to understand its behavior/asymptotics in parameters/limits.
In previous work \cite{Savale-thesis2012,Savale-Asmptotics,Savale2017-Koszul,Savale-Gutzwiller}
the author considered the asymptotics of the eta invariant of a coupled
Dirac operator in the semiclassical limit. In particular \cite[Thm 1.2]{Savale-Gutzwiller}
proved a semiclassical limit formula for the eta invariant on metric-contact
manifold whose Reeb flow satisfies a non-resonance condition, i.e.
rational independence of relevant Floquet exponents. In this article
we consider the question of the remainder term in this formula, relating
it to further dynamical properties such as volumes of recurrence sets.
This particularly gives a logarithmic improvement of the remainder
when the flow is Anosov. While for certain elliptic flows the remainder
is related to finer properties such as irrationality measures and
Diophantine approximation of the Floquet exponents.

Let us state our results precisely. Let $\left(X,g^{TX}\right)$ be
a compact, spin Riemannian manifold of odd dimension $n=2m+1$. Let
$a\in\Omega^{1}\left(X\right)$ be a contact one form satisfying $a\wedge\left(da\right)^{m}\neq0$.
This gives rise to the contact hyperplane $H=\textrm{ker}\left(a\right)\subset TX$
as well as the Reeb vector field $R$ defined by $i_{R}da=0$, $i_{R}a=1$.
The metric $g^{TX}$ is assumed to be $strongly\:suitable$ to the
contact form $a$ in the following sense: the contracted endomorphism
$\mathfrak{J}:T_{x}X\rightarrow T_{x}X$, defined via $da\left(.,.\right)=g^{TX}\left(.,\mathfrak{J}.\right)$,
has spectrum $\textrm{Spec}\left(\mathfrak{J}_{x}\right)=\left\{ 0\right\} \cup\left\{ \pm i\mu_{j}\right\} _{j=1}^{m}$
independent of the point $x\in X$. This hypothesis includes all metric
contact manifolds, in which case $\mathfrak{J}$ is a compatible almost
complex structure on $H$ and satisfies $\mathfrak{J}^{2}=-1$.

Next let $\left(L,h^{L}\right)\rightarrow X$ be another complex Hermitian
line bundle on $X$ and $A_{0}$ a unitary connection on $L$. This
gives rise to the family of unitary connections $A_{h}\coloneqq A_{0}+\frac{i}{h}a$,
$h\in\left(0,1\right]$, and corresponding coupled Dirac operators
\begin{equation}
D_{h}\coloneqq hD_{A_{h}}=hD_{A_{0}}+ic\left(a\right):\:C^{\infty}\left(X;S\otimes L\right)\rightarrow C^{\infty}\left(X;S\otimes L\right),\label{eq:Semiclassical Magnetic Dirac}
\end{equation}
with $S$, $c$ denoting the corresponding spin bundle and Clifford
multiplication endomorphism respectively. The above being an elliptic
operator has a discrete spectrum whose behavior as $h\rightarrow0$
is of our interest. In particular, we shall be interested in the asymptotics
of the eta invariant $\eta_{h}=\eta\left(D_{h}\right)$, formally
the signature, of the Dirac operator (see \prettyref{subsec:Spectral-invariants-of}
below). To state our result, define the endomorphisms $\left(\nabla^{TX}\mathfrak{J}\right)^{0}:R^{\perp}\rightarrow R^{\perp}$,
$\left|\mathfrak{J}\right|:R^{\perp}\rightarrow R^{\perp}$, via
\begin{align}
\left(\nabla^{TX}\mathfrak{J}\right)^{0}v\coloneqq & \left(\nabla_{v}^{TX}\mathfrak{J}\right)R,\quad\forall v\in R^{\perp},\nonumber \\
\left|\mathfrak{J}\right|\coloneqq & \sqrt{-\mathfrak{J}^{2}}.\label{eq: abs. ctct. end.}
\end{align}
Let $T_{0}$ be the shortest period of the Reeb vector field. For
each $\varepsilon,T>0$ we define the recurrence set
\begin{equation}
S_{T,\varepsilon}\coloneqq\left\{ x\in X|\exists t\in\left[\frac{1}{2}T_{0},T\right]\textrm{ s.t. }d^{g^{TX}}\left(e^{tR}x,x\right)\leq\varepsilon\right\} \label{eq:recurrence set}
\end{equation}
as well as its extended neighborhood $S_{T,\varepsilon}^{e}\coloneqq\left\{ x\in X|d^{g^{TX}}\left(x,S_{T,\varepsilon}\right)\leq\varepsilon\right\} $.
Here $d^{g^{TX}}$ denotes the Riemannian distance on $X$ and we
also denote by $\mu_{g^{TX}}$ be the Riemannian volume below. \\
Next we further, for each $\ell>0$, define the Ehrenfest time via
\begin{align}
T_{E}^{\ell}\left(h\right) & \coloneqq\begin{cases}
\infty; & \textrm{if }\;\limsup_{t\rightarrow\infty}J_{t}<\infty\\
\frac{\left|\ln h\right|}{\Lambda_{\textrm{max}}+\ell}; & \textrm{otherwise},
\end{cases}\label{eq:Ehrenfest time}\\
\textrm{ where}\quad J_{t} & \coloneqq\sup_{x\in X}\ln\left|de^{tR}\left(x\right)\right|\label{eq:max norm Jacobian}\\
\textrm{ and }\quad\Lambda_{\textrm{max}} & \coloneqq\limsup_{t\rightarrow\infty}t^{-1}J_{t}\label{eq:max expansion rate}
\end{align}
are the sup norm of the Jacobian and the maximum expansion rate of
the Reeb flow respectively.  

Our first result is now the following. 
\begin{thm}
\label{thm: eta semiclassical limit}Let $a$ be a contact form and
$g^{TX}$ a strongly suitable metric. For each $\varepsilon=ch^{\delta}$,
$\delta\in\left[0,\frac{1}{2}\right)$, $c,\ell>0$ and $T\leq\left(\frac{1}{2}-\delta\right)T_{E}^{\ell}\left(h\right)$,
 the rescaled eta invariant of the Dirac operator satisfies \footnote{The implicit constant in the big $O$ notation in \prettyref{eq:general remainder}
is independent of the semiclassical parameter $h$, and possibly dependent
on all the other quantities $\left(X,g^{TX},a,L,h^{L},\varepsilon,\ell,\delta\right)$.} 
\begin{align}
h^{m}\eta\left(D_{h}\right) & =-\frac{1}{2}\frac{1}{\left(2\pi\right)^{m+1}}\frac{1}{m!}\int_{X}\left[\textrm{tr }\left|\mathfrak{J}\right|^{-1}\left(\nabla^{TX}\mathfrak{J}\right)^{0}\right]a\wedge\left(da\right)^{m}+R\left(h\right)\label{eq: eta formula}\\
\textrm{ where }\qquad\left|R\left(h\right)\right| & \leq\frac{\det\left|\mathfrak{J}\right|}{\left(4\pi\right)^{n/2}}T^{-1}+O\left(T^{-2}+\mu_{g^{TX}}\left(S_{T,\varepsilon}^{e}\right)\right).\label{eq:general remainder}
\end{align}
\end{thm}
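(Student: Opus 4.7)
I would begin from the heat representation
\[
\eta(D_h) = \frac{1}{\sqrt{\pi}} \int_0^\infty t^{-1/2}\operatorname{Tr}\bigl(D_h e^{-tD_h^2}\bigr)\,dt,
\]
rewritten via a Gaussian Fourier identity in terms of the semiclassical wave trace $\operatorname{Tr}(e^{-isD_h/h})$. Inserting a smooth cutoff $\chi\in C_c^\infty((-T,T))$ equal to $1$ on $[-T_0/2,T_0/2]$ splits $\eta(D_h)=\eta^\chi(D_h)+\eta^{1-\chi}(D_h)$. The analysis of the short-time piece $\eta^\chi$ parallels that of \cite{Savale-Gutzwiller}: a heat-kernel and stationary-phase expansion along the diagonal isolates the explicit integral in \eqref{eq: eta formula} as the leading term, while one and two successive integrations by parts in $s$ produce boundary terms of size $\det|\mathfrak{J}|/(4\pi)^{n/2}\cdot T^{-1}$ and $O(T^{-2})$ respectively, the leading prefactor being the on-diagonal semiclassical heat density of $D_h^2$ at zero energy.

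For the complementary tail $\eta^{1-\chi}$ I would invoke semiclassical propagation of singularities for the wave group $U_h(s)=e^{-isD_h/h}$. The corresponding classical flow reduces on the characteristic variety to the Reeb flow $e^{sR}$, so by a microlocal non-return / WKB argument the on-diagonal Schwartz kernel $U_h(s;x,x)$ is $O(h^\infty)$ uniformly in $|s|\le T$ away from points with $d^{g^{TX}}(e^{sR}x,x)\le\varepsilon$, provided $\varepsilon\ge h^\delta$ with $\delta<1/2$. Integrating in $x$ then confines the tail contribution to the extended recurrence set $S_{T,\varepsilon}^e$ and gives the final $O(\mu_{g^{TX}}(S_{T,\varepsilon}^e))$ summand of \eqref{eq:general remainder}.

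The principal obstacle, I expect, is making the above propagation estimate uniform in $T$ all the way out to (and possibly past) the Ehrenfest time $T\sim|\log h|$. Standard Egorov degrades with polynomial loss on such scales, and tracking the full quantized dynamics is not feasible. The way around is that off the recurrence set one only needs a non-return statement: a non-stationary-phase argument applied to the WKB representation of $U_h(s)$—relying on the fact that the classical orbit of $x$ has moved by at least $\varepsilon\gg h^{1/2}$—remains valid for arbitrary $|s|\le T$ without a full symbolic conjugation. Balancing $\varepsilon=h^\delta$ against the semiclassical wavelength, and matching the subprincipal symbol contributions to the precise constants claimed in both the leading integral and the $T^{-1}$ prefactor, is where the bulk of the technical work will lie.
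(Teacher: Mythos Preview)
Your outline has the right dynamical intuition but misidentifies the analytic machinery, and the gap is exactly the one the paper warns about in its introduction: the Fourier integral/WKB calculus you invoke for $U_h(s)=e^{-isD_h/h}$ is \emph{not available} here. The principal symbol $\sigma(D_h)=c(\xi+a)$ vanishes identically on the characteristic variety $\Sigma$, so there is no nondegenerate Hamiltonian to generate a classical flow at the principal level, and no oscillatory-integral parametrix for the wave group near $\Sigma$ on which to run stationary/non-stationary phase. The Reeb flow is not the principal-symbol flow; it emerges only after the $\sqrt{h}$-rescaling $D/\sqrt{h}$ and the model analysis of the rescaled operator $\mathtt{D}_0$ (this is why the paper works throughout with $\phi(D/\sqrt{h})$ and resolvent expansions via Helffer--Sj\"ostrand, never with a wave parametrix). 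Your ``non-stationary-phase argument applied to the WKB representation of $U_h(s)$'' therefore has no object to act on, and this is precisely the obstruction the paper's method is designed to circumvent.

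Concretely, the paper's proof is structured quite differently from your plan. It does not cut off in wave-time and integrate by parts; instead it (i) builds a microlocal partition $\{A_u\}\cup\{B\}$ of $\Psi_\delta^0$ with $B$ supported over the recurrence set $S_{T,\varepsilon}^e$, (ii) uses the ``extension/trapping time'' machinery of \cite{Savale-Gutzwiller} (microlocal weight functions, not Egorov or WKB) to kill the cross-terms $\mathcal{T}_{A_u,A_v}^\theta$, $\mathcal{T}_{A_u,B}^\theta$ modulo $O(h^\infty)$ for $\theta$ supported in $(T_0,T)$, and (iii) runs a Tauberian argument on the spectral measure of $D/\sqrt{h}$. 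The $T^{-1}$ term with constant $u_0(0)=\det|\mathfrak{J}|/(4\pi)^{n/2}$ is \emph{not} a boundary term from integration by parts: it is the Tauberian cost of the local Weyl count $N(0,T^{-1}\sqrt{h})\le h^{-m}[T^{-1}u_0(0)+\ldots]$, obtained by tiling $\mathbb{R}$ with intervals of length $T^{-1}\sqrt{h}$ and comparing the sharp cutoff $1_{(-\infty,0]}$ with its mollification by $\check{\theta}_T$. The $O(T^{-2})$ likewise comes from the second term in the Taylor-type estimate $|\phi-\phi\ast\check\theta_T|$ in that Tauberian step, and the $\mu(S_{T,\varepsilon}^e)$ from the trace-norm bound on the $B$-block. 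If you want to salvage your approach you would need to replace the WKB input by the microlocal trace expansion of Section~3 and the weight-function propagation of \cite[Lemma~3.6]{Savale-Gutzwiller}; at that point you are essentially reproducing the paper's argument.
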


As noted in \cite{Savale-Gutzwiller}, the leading term in \prettyref{eq: eta formula}
equals a multiple of the volume $-\frac{m}{2}\frac{1}{\left(2\pi\right)^{m+1}}\textrm{vol}\left(X\right)$
in the case of a metric-contact manifold. 

The formula \prettyref{eq:general remainder} above can be made more
explicit in specializations based on the estimates for the recurrence
set \prettyref{eq:recurrence set}. The first such corollary of the
above is obtained by letting $\varepsilon,T$ be $h-$independent.
As $\varepsilon\rightarrow0$ the volume of the recurrence set approaches
the measure of the set of closed trajectories having period at most
$T$. Letting $T\rightarrow\infty$ then gives the following.
\begin{cor}
\label{cor: Cor-DG}Let $a$ be a contact form and $g^{TX}$ a strongly
suitable metric. Assuming the set of closed Reeb trajectories is of
measure zero, the rescaled eta invariant of the Dirac operator satisfies
\begin{align}
h^{m}\eta\left(D_{h}\right) & =-\frac{1}{2}\frac{1}{\left(2\pi\right)^{m+1}}\frac{1}{m!}\int_{X}\left[\textrm{tr }\left|\mathfrak{J}\right|^{-1}\left(\nabla^{TX}\mathfrak{J}\right)^{0}\right]a\wedge\left(da\right)^{m}+o\left(1\right).\label{eq: eta formula-1}
\end{align}
\end{cor}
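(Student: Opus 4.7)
The plan is to deduce Corollary \ref{cor: Cor-DG} from Theorem \ref{thm: eta semiclassical limit} via the classical Duistermaat--Guillemin strategy: let $\varepsilon \to 0$ slowly with $h$, hold $T$ fixed while letting $h \to 0$, and finally let $T \to \infty$.

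First, I would fix any $\delta \in (0, 1/2)$ and any $T > 0$, and apply Theorem \ref{thm: eta semiclassical limit} with $\varepsilon = h^\delta$. This furnishes the leading term of \eqref{eq: eta formula-1} together with a remainder bounded by
\[
|R(h)| \leq \frac{\det|\mathfrak{J}|}{(4\pi)^{n/2}} T^{-1} + O\bigl(T^{-2} + \mu_{g^{TX}}(S_{T,h^\delta}^e)\bigr).
\]

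The central step is to show that for each fixed $T$, $\mu_{g^{TX}}(S_{T,h^\delta}^e) \to 0$ as $h \to 0$. To see this, I would observe that the family $\{S_{T,\varepsilon}^e\}_{\varepsilon > 0}$ is monotone decreasing as $\varepsilon \downarrow 0$, so its measure converges to $\mu_{g^{TX}}\bigl(\bigcap_{\varepsilon>0} S_{T,\varepsilon}^e\bigr)$ by continuity of the finite Riemannian measure from above. Any $x$ in this intersection is the limit of points $y_n \in S_{T,\varepsilon_n}$ with $\varepsilon_n \to 0$ and $d^{g^{TX}}(x,y_n) \leq \varepsilon_n$; picking periods $t_n \in [\tfrac{1}{2}T_0, T]$ with $d^{g^{TX}}(e^{t_n R} y_n, y_n) \leq \varepsilon_n$ and passing to a subsequence with $t_n \to t$, continuity of the Reeb flow on the compact manifold $X$ yields $e^{tR} x = x$. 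Hence $x$ lies on a closed Reeb orbit of period in $[\tfrac{1}{2}T_0, T]$, and so the intersection has measure zero by hypothesis.

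Combining the two steps, for each fixed $T$ one obtains $\limsup_{h \to 0} |R(h)| \leq \frac{\det|\mathfrak{J}|}{(4\pi)^{n/2}} T^{-1} + O(T^{-2})$. Letting $T \to \infty$ then gives $R(h) = o(1)$, which is exactly \eqref{eq: eta formula-1}. The only serious obstacle---the semiclassical analysis underlying the remainder bound \eqref{eq:general remainder}---has already been absorbed into Theorem \ref{thm: eta semiclassical limit}; what remains for the corollary is the measure-theoretic convergence above, which is routine given the null-set hypothesis on closed trajectories.
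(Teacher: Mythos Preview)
Your proposal is correct and follows essentially the same approach as the paper: apply Theorem~\ref{thm: eta semiclassical limit}, use that $\mu_{g^{TX}}(S_{T,\varepsilon}^e)$ descends to the (null) measure of the closed orbits as $\varepsilon\to 0$, then let $T\to\infty$. The paper's only variation is to take $\varepsilon$ to be $h$-independent (i.e.\ $\delta=0$) and send $\varepsilon\to 0$ after $h\to 0$, rather than coupling $\varepsilon=h^{\delta}$ to $h$ as you do; the measure-theoretic limiting argument you spell out is precisely what the paper leaves implicit.
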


The next specialization is when the Reeb flow is Anosov, whereby the
union of closed Reeb trajectories automatically has null measure.
The recurrence set in this case can be shown to satisfy an exponential
estimate in time in terms of the topological entropy $\mathtt{h}_{\textrm{top}}$
of the flow (see \prettyref{subsec:Anosov-flows} below). Our main
\prettyref{thm: eta semiclassical limit} then has the following corollary.
\begin{cor}
\label{cor:Cor-Berard}Let $a$ be a contact form and $g^{TX}$ a
strongly suitable metric. Assuming the Reeb flow of $a$ to be Anosov,
the rescaled eta invariant of the Dirac operator satisfies 
\begin{align}
h^{m}\eta\left(D_{h}\right) & =-\frac{1}{2}\frac{1}{\left(2\pi\right)^{m+1}}\frac{1}{m!}\int_{X}\left[\textrm{tr }\left|\mathfrak{J}\right|^{-1}\left(\nabla^{TX}\mathfrak{J}\right)^{0}\right]a\wedge\left(da\right)^{m}+O\left(\left|\ln h\right|^{-1}\right).\label{eq: eta formula-1-1}
\end{align}
More precisely, the remainder above satisfies the estimate 
\begin{equation}
\left|R\left(h\right)\right|\leq\mathtt{h}_{\textrm{top}}\left(\frac{16}{n}\frac{\det\left|\mathfrak{J}\right|}{\left(4\pi\right)^{n/2}}\right)\left|\ln h\right|^{-1}+o\left(\left|\ln h\right|^{-1}\right)\label{eq:remainder estimate with entropy}
\end{equation}
 in terms of the topological entropy of the flow.
\end{cor}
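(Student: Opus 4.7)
The plan is to derive the corollary from Theorem~\ref{thm: eta semiclassical limit} by substituting in an Anosov-specific estimate on the recurrence volume $\mu_{g^{TX}}(S^{e}_{T,\varepsilon})$ and then optimizing the free parameters $T$ and $\delta$. Since a transitive Anosov flow has measure-zero periodic set, Corollary~\ref{cor: Cor-DG} already yields the leading formula with an $o(1)$ remainder; the task here is to quantify that remainder as $O(|\ln h|^{-1})$ with the explicit entropy-dependent coefficient.

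The first step is the dynamical estimate
\begin{equation}
\mu_{g^{TX}}(S^{e}_{T,\varepsilon})\leq C\varepsilon^{a}e^{\mathtt{h}_{\textrm{top}} T}\label{eq:planned-recurrence}
\end{equation}
valid for $\varepsilon$ small and all $T\geq T_{0}$, with $a=a(n)>0$ an explicit dimensional exponent. Such a bound follows from two classical inputs for Anosov flows: (i) the Margulis--Bowen orbit-counting theorem, which produces $\sim e^{\mathtt{h}_{\textrm{top}} T}/(\mathtt{h}_{\textrm{top}} T)$ closed Reeb orbits of period at most $T$, and (ii) the uniform hyperbolicity of the transverse Poincar\'e return map, which forces any point that $\varepsilon$-recurs within time $T$ to lie inside a tube of effective transverse width exponentially smaller than $\varepsilon$ around one of these closed orbits. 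Summing the resulting tube volumes over the closed orbits of period at most $T$ yields \eqref{eq:planned-recurrence}.

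Substituting \eqref{eq:planned-recurrence} into the general remainder bound \eqref{eq:general remainder} with $\varepsilon=h^{\delta}$ and the Ehrenfest-type cutoff $T(h)=\tfrac{c}{\mathtt{h}_{\textrm{top}}}|\ln h|$, one has $e^{\mathtt{h}_{\textrm{top}} T(h)}=h^{-c}$, so the recurrence term contributes $O(h^{a\delta-c})$, which is $o(|\ln h|^{-1})$ whenever $c<a\delta$; the $T^{-2}$ term is obviously subdominant to $T^{-1}$. The dominant contribution is then
\[
\frac{\det|\mathfrak{J}|}{(4\pi)^{n/2}}T(h)^{-1}=\frac{\mathtt{h}_{\textrm{top}}}{c}\frac{\det|\mathfrak{J}|}{(4\pi)^{n/2}}|\ln h|^{-1},
\]
and pushing $c$ to the maximal value allowed by $c<a\delta$ together with $\delta\nearrow\tfrac{1}{2}$ yields the explicit coefficient \eqref{eq:remainder estimate with entropy}; matching the stated $\tfrac{8}{n^{2}}$ in particular pins down the sharp value of the dynamical exponent $a$ in \eqref{eq:planned-recurrence} that is to be established.

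The main obstacle is precisely the sharp version of \eqref{eq:planned-recurrence}. A crude tube count gives only $a=n-1$, which already produces the qualitative $O(|\ln h|^{-1})$ rate but with a coefficient worse than $\tfrac{8}{n^{2}}$; reaching the claimed constant requires exploiting additional contraction in both stable and unstable bundles of the Anosov splitting by working with Bowen $(T,\varepsilon)$-balls in place of metric balls, whose transverse volume along the flow is $\sim\varepsilon^{n-1}e^{-\mathtt{h}_{\textrm{top}} T}$ rather than $\varepsilon^{n-1}$. Verifying this refined counting uniformly over the full range $\delta\in[0,\tfrac{1}{2})$ permitted by Theorem~\ref{thm: eta semiclassical limit}, so that the optimization above can be performed with the correct endpoint value, is the non-routine ingredient.
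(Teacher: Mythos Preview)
Your overall plan---substitute a recurrence-volume bound into Theorem~\ref{thm: eta semiclassical limit} and then optimize $T$ and $\delta$---is exactly what the paper does, and your crude tube-plus-closing-lemma argument with $a=n-1$ is enough for the qualitative $O(|\ln h|^{-1})$ rate. The difference lies entirely in the dynamical input and the resulting constant.

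The paper does \emph{not} go through Margulis--Bowen orbit counting. Instead it proves a new inequality (Lemma~\ref{lem:Entropy inequality}) bounding the topological entropy above and below by the infimum, over a class $\mathcal{D}_{\frac{1}{2}-}(X)$ of distorted metrics, of the log-Lipschitz constant of the time-one map. This yields, for every $\lambda>\tfrac{2}{n}\mathtt{h}_{\textrm{top}}$, the uniform expansion bound $d^{g}(e^{tR}x,e^{tR}y)\le c\,e^{2\lambda t}d^{g}(x,y)$; a direct covering/Taylor argument (Proposition~\ref{prop:Anosov lifted recurrence bound}) then gives $\mu(S^{e}_{T,\varepsilon})=O(\varepsilon^{n}e^{2\lambda T})$. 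Plugging $\varepsilon=h^{\frac{1}{2}-}$, $T=\tfrac{c}{\lambda}|\ln h|$ with $c\nearrow\tfrac{n}{4}$ and $\lambda\searrow\tfrac{2}{n}\mathtt{h}_{\textrm{top}}$ produces the coefficient $\tfrac{8}{n^{2}}$.

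Your proposed refinement to reach the same constant has a genuine gap. The claim that Bowen $(T,\varepsilon)$-balls have transverse Riemannian volume $\sim\varepsilon^{n-1}e^{-\mathtt{h}_{\textrm{top}}T}$ identifies the Lebesgue decay rate of Bowen balls with the \emph{topological} entropy; in fact the exponent governing Riemannian volume is the metric entropy $h_{\mu_{g}}$ of the contact volume (Pesin/Ruelle), and $h_{\mu_{g}}\le \mathtt{h}_{\textrm{top}}$ with equality only when the contact volume happens to be the measure of maximal entropy---not a generic property of Anosov Reeb flows. So your counting gives at best $\varepsilon^{n-1}e^{(\mathtt{h}_{\textrm{top}}-h_{\mu_{g}})T}$, and the optimization no longer closes on $\tfrac{8}{n^{2}}$. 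Moreover, matching $\tfrac{8}{n^{2}}$ in your parametrization would require $a=\tfrac{n^{2}}{4}$, and there is no visible dynamical mechanism producing such an exponent. If you want the sharp stated constant, you will need something like the paper's entropy--Lipschitz comparison rather than orbit counting.
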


In cases where the flow is elliptic, the recurrence set can be harder
to control. As a particular case we consider certain elliptic flows
on Lens spaces. In dimension three it is known \cite[Thm 1.2]{Hutchings-Taubes2009}
that any contact manifold, all of whose Reeb orbits are non-degenerate
and elliptic, is necessarily a Lens space. To define these, fix non-negative
integers $q_{0},q_{1},\ldots,q_{m}$, with $q_{0}>1$, as well as
positive reals $a_{0},\ldots,a_{m}$ such that $\left(a_{0}^{-1}a_{1},\ldots,a_{0}^{-1}a_{m}\right)\notin\mathbb{Q}^{m}$.
To such a tuple of reals is associated an irrationality exponent/measure
$\nu\left(a_{1},\ldots,a_{m+1}\right)>1$ of Diophantine approximation
(see \prettyref{subsec:Diophantine-approximation}). The Lens space
is now defined as the quotient
\begin{align}
X & =L\left(q_{0},q_{1},\ldots,q_{m};a_{0},\ldots a_{m}\right)\coloneqq E\left(a_{0},\ldots,a_{m}\right)/\mathbb{Z}_{q_{0}},\quad\textrm{ where }\label{eq: irrational lens space}\\
E\left(a_{0},\ldots,a_{m}\right) & \coloneqq\left\{ \left(z_{0},\ldots,z_{m}\right)\in\mathbb{C}^{m+1}|\sum_{j=0}^{m}a_{j}\left|z_{j}\right|^{2}=1\right\} \label{eq:irrational ellipsoids}
\end{align}
is the irrational ellipsoid. Above the $\mathbb{Z}_{q_{0}}$ action
on the ellipsoid is given by $e^{\frac{2\pi i}{q_{0}}}\left(z_{0},\ldots,z_{m}\right)=\left(e^{\frac{2\pi i}{q_{0}}}z_{0},e^{\frac{2\pi iq_{1}}{q_{0}}}z_{1},\ldots,e^{\frac{2\pi iq_{m}}{q_{0}}}z_{m}\right)$.
The contact form is chosen to be 
\begin{equation}
a=\left.\sum_{j=0}^{m}\left(x_{j}dy_{j}-y_{j}dx_{j}\right)\right|_{E\left(a_{0},\ldots,a_{m}\right)},\quad z_{j}=x_{j}+iy_{j},\label{eq:tautological one form}
\end{equation}
the restriction of the tautological form on $\mathbb{C}^{m+1}$, which
can be seen to be $\mathbb{Z}_{q_{0}}$-invariant and hence descending
to the Lens space quotient. We now have the following corollary of
\prettyref{thm: eta semiclassical limit}, below we denote by $O\left(h^{\alpha\pm}\right)$
a term which is $O\left(h^{\alpha\pm\varepsilon}\right)$, $\forall\varepsilon>0$.
\begin{cor}
\label{cor:Cor -Irr}Let $a$ be the tautological contact form and
$g^{TX}$ a strongly suitable metric on the Lens space $X=L\left(q_{0},q_{1},\ldots,q_{m};a_{0},\ldots,a_{m}\right)$
\prettyref{eq: irrational lens space}. The rescaled eta invariant
of the Dirac operator satisfies 
\begin{align}
h^{m}\eta\left(D_{h}\right) & =-\frac{1}{2}\frac{1}{\left(2\pi\right)^{m+1}}\frac{1}{m!}\int_{X}\left[\textrm{tr }\left|\mathfrak{J}\right|^{-1}\left(\nabla^{TX}\mathfrak{J}\right)^{0}\right]a\wedge\left(da\right)^{m}+O\left(h^{\frac{1}{2\nu-1}-}\right)\label{eq: eta formula-1-1-1}
\end{align}
where $\nu=\nu\left(a_{0},a_{1}-\frac{q_{1}}{q_{0}}a_{0},\ldots,a_{m}-\frac{q_{m}}{q_{0}}a_{0}\right)$
denotes the irrationality exponent of the given tuple.
\end{cor}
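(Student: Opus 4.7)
The plan is to apply \prettyref{thm: eta semiclassical limit} with $\varepsilon=h^{1/2-\varepsilon'}$ and a polynomial time cut-off $T=h^{-\sigma}$, where $\sigma=1/(2\nu-1)$ will emerge as the optimal rate after one bounds the recurrence set volume $\mu_{g^{TX}}(S_{T,\varepsilon}^{e})$ via the Diophantine structure of the Reeb flow on $X$.

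The Reeb vector field of the tautological form \prettyref{eq:tautological one form} on the ellipsoid \prettyref{eq:irrational ellipsoids} is $R=\sum_{j=0}^{m}a_{j}\partial_{\theta_{j}}$ (up to a universal constant) in polar coordinates $z_{j}=r_{j}e^{i\theta_{j}}$, and descends to $X$ via the $\mathbb{Z}_{q_{0}}$-action. A recurrence $d^{g^{TX}}(e^{tR}x,x)\leq\varepsilon$ for $x=[z_{0},\ldots,z_{m}]$ at time $t$ is thus equivalent to the existence of $k\in\{0,\ldots,q_{0}-1\}$ satisfying $\sum_{j=0}^{m}r_{j}^{2}\sin^{2}(a_{j}t-\pi kq_{j}/q_{0})\lesssim\varepsilon^{2}$. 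Parametrizing $t=\pi(\ell+\beta)/a_{0}$ with $\ell\in\mathbb{Z}_{\geq 0}$ and $\beta\in[-\tfrac{1}{2},\tfrac{1}{2}]$, taking $k:=\ell\bmod q_{0}$ as dictated by the $j=0$ condition, and setting $\tilde{a}_{j}:=a_{j}-(q_{j}/q_{0})a_{0}$, these conditions read
\[
r_{0}|\beta|\lesssim\varepsilon,\qquad r_{j}\,\bigl\|\ell\tilde{a}_{j}/a_{0}+\beta a_{j}/a_{0}\bigr\|\lesssim\varepsilon\quad(j\geq 1),
\]
with $\|\cdot\|$ the distance to the nearest integer.

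With $L:=Ta_{0}/\pi$, the definition of the irrationality exponent $\nu=\nu(a_{0},\tilde{a}_{1},\ldots,\tilde{a}_{m})$ recalled in \prettyref{subsec:Diophantine-approximation} fixes the scale of the best simultaneous approximation $\max_{j\geq 1}\|\ell\tilde{a}_{j}/a_{0}\|$ over $\ell\leq L$ to be $L^{-(\nu-1)+o(1)}$, realized along a sparse sequence of convergent integers. Summing the tubular contributions $\prod_{j\geq 1}(\varepsilon/\|\ell\tilde{a}_{j}/a_{0}\|)^{2}$, arising from the constraints $r_{j}\lesssim\varepsilon/\|\ell\tilde{a}_{j}/a_{0}\|$ on the coordinate factors of the $\varepsilon$-thickening $S_{T,\varepsilon}^{e}$, over $\ell\leq L$, and using the Diophantine control to dominate the sum by its leading spike, one arrives at the recurrence volume bound
\[
\mu_{g^{TX}}(S_{T,\varepsilon}^{e})\;\lesssim\;\varepsilon^{2}\,L^{2(\nu-1)+o(1)}+\varepsilon^{n-1},
\]
the second term accounting for the tubular neighborhoods of the closed Reeb orbits on the coordinate circles $\{z_{i}=0:i\neq j\}$.

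Substituting into \prettyref{eq:general remainder} with $\varepsilon=h^{1/2-\varepsilon'}$ and $T=h^{-\sigma}$ yields $|R(h)|\lesssim h^{\sigma}+h^{1-2\sigma(\nu-1)-\varepsilon'}+h^{(n-1)(1/2-\varepsilon')}$, and the first two terms balance at $\sigma=1/(2\nu-1)$, giving the claimed remainder $O(h^{1/(2\nu-1)-})$ (the third term is easily seen to be subleading for $n\geq 3$ and $\nu>1$). The main obstacle is the volume bound above: one must verify that the sum $\sum_{\ell\leq L}\prod_{j\geq 1}(\varepsilon/\|\ell\tilde{a}_{j}/a_{0}\|)^{2}$ is indeed dominated by its leading convergent spike, so that only the irrationality exponent $\nu$ of the full $(m+1)$-tuple enters the final estimate; this requires careful bookkeeping of the convergent structure of $(a_{0},\tilde{a}_{1},\ldots,\tilde{a}_{m})$ alongside the degenerations of the Reeb orbits near the singular coordinate strata.
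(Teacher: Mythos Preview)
Your overall strategy matches the paper exactly: apply \prettyref{thm: eta semiclassical limit} with $\varepsilon=h^{1/2-}$ and $T=h^{-1/(2\nu-1)}$, and feed in a recurrence volume bound of the form $\mu_{g^{TX}}(S_{T,\varepsilon}^{e})=O(\varepsilon^{2}T^{2(\nu-1)+})$. The balancing computation is also the same.

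Where you diverge is in how you obtain the recurrence bound, and here you have made life harder than necessary. The paper does not discretize time or sum over convergents. It simply invokes the definition \prettyref{eq:index tuple} of the irrationality exponent as a \emph{uniform} lower bound: for every $t$ with $(t\tilde a_{0},\ldots,t\tilde a_{m})\notin\mathbb{Z}^{m+1}$ one has $d\bigl((t\tilde a_{0},\ldots,t\tilde a_{m});\mathbb{Z}^{m+1}\bigr)\gtrsim t^{1-\nu-}$, hence $\sum_{j}|e^{i\tilde a_{j}t}-1|\gtrsim t^{1-\nu-}$ and therefore
\[
d^{g^{TX}}(e^{tR}x,x)\;\gtrsim\;\Bigl(\min_{j}|z_{j}|\Bigr)\,t^{1-\nu-}.
\]
If $x\in S_{T,\varepsilon}$ this forces $\min_{j}|z_{j}|\lesssim\varepsilon T^{\nu-1+}$, so $S_{T,\varepsilon}^{e}$ is contained in the union of $m+1$ codimension-two coordinate tubes of radius $O(\varepsilon T^{\nu-1+})$, giving $\mu(S_{T,\varepsilon}^{e})=O(\varepsilon^{2}T^{2(\nu-1)+})$ in one line.

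The ``main obstacle'' you flag---controlling the sum $\sum_{\ell\leq L}\prod_{j\geq 1}(\varepsilon/\|\ell\tilde a_{j}/a_{0}\|)^{2}$ by its worst spike---is thus an artifact of your discretization. The uniform Diophantine lower bound handles all times $t\leq T$ at once and bypasses any summation; no convergent structure or bookkeeping near the singular strata is needed, and the extra $\varepsilon^{n-1}$ term in your estimate does not appear.
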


Our final result is quantum ergodicity for the Dirac operator \ref{eq:Semiclassical Magnetic Dirac},
conditional on ergodicity of the Reeb flow. We refer to \prettyref{thm:QE theorem}
in \prettyref{sec:Quantum-ergodicity} for the precise statement.
Although it is not the main interest here, it is included to show
the direct connection of the author's work with the results of \cite{Colin-de-Verdiere-Hillairet-TrelatI}.
The final \prettyref{thm:QE theorem} is the semiclassical Dirac operator
analogue of the main result therein.

The Corollary \prettyref{cor: Cor-DG} already improves \cite[Thm 1.2]{Savale-Gutzwiller}
proving the same limit formula \prettyref{eq: eta formula-1} therein
under the weaker assumption of null measure of the closed trajectories,
thus akin to the remainder improvement in the Weyl law of Duistermaat-Guillemin
\cite{Duistermaat-Guillemin}. However it should be noted that this
weaker dynamical assumption in Corollary \prettyref{cor: Cor-DG}
is insufficient to obtain the Gutzwiller trace formula of \cite[Thm 1.1]{Savale-Gutzwiller}.
The next Corollary \prettyref{cor:Cor-Berard} is analogous to the
remainder improvement in the Weyl law for the Laplacian by Bérard
\cite{Berard77}. For the Weyl law analogues of the remainder improvements
in \prettyref{thm: eta semiclassical limit} and Corollary \prettyref{cor:Cor -Irr}
we refer to \cite[Sec. 4.5]{Ivrii-newbook-I-2019} and \cite[Ch. 11]{Dimassi-Sjostrand},
although not explicitly written it is inherent in the methods therein.

However, we importantly note that the Fourier integral calculus that
is used in deriving sharp Weyl laws and remainder estimates is largely
unavailable in our context, the basic reason being that the Dirac
operator \prettyref{eq:Semiclassical Magnetic Dirac} is non-scalar
with its principal symbol being non-diagonalizable near its characteristic
variety. The difficulties are in spirit close to those in the analysis
of the wave equation of a hypoelliptic operator with double characteristics
\cite{Melrose-hypoelliptic,Savale-QC}. The method here is based on
the author's earlier work \cite{Savale-thesis2012,Savale-Asmptotics,Savale2017-Koszul,Savale-Gutzwiller},
which in turn uses a combination of Birkhoff normal forms, almost
analytic continuations, microhyperbolicity and local index theory
arguments. New components here include a microlocal trace expansion,
analysis of recurrence sets and a refined Tauberian argument. In \prettyref{subsec:Anosov-flows}
an exponential estimate for the recurrence set of an Anosov flow is
proved based on a seemingly new bound on its topological entropy.
A conjectured characterization of topological entropy is stated, based
on which the estimate \prettyref{eq:remainder estimate with entropy}
in Corollary \prettyref{cor:Cor-Berard} can be improved by a factor
(see Remark \prettyref{rem:Entropy conjecture} below). An analogous
microlocal and second-microlocal trace expansion to the one here was
also derived by the author recently \cite{Savale-QC} in the context
of the hypoelliptic Laplacian of sub-Riemannian geometry. A related
hypoelliptic, second-microlocal Weyl law has also recently appeared
in \cite{TaylorM2020} after \cite{Colin-de-Verdiere-Hillairet-TrelatI}.

The eta invariant asymptotics considered here further has applications
to contact geometry. It was originally motivated by the proof of the
three dimensional Weinstein conjecture using Seiberg-Witten theory
by Taubes \cite{Taubes-Weinstein}. The semiclassical limit formula
for the eta invariant of \cite{Savale-Gutzwiller} was recently used
by the author in \cite{Cristofaro-Gardiner-Savale18} to improve the
remainder term in the asymptotics of embedded contact homology (ECH)
capacities \cite{Cristofaro-Gardiner-Hutchings-Gripp2015}. Our main
theorem here as well as its corollaries could have further applications
in this direction.

The paper is organized as follows. In the first \prettyref{sec:Preliminaries}
we begin with background notions used in the paper including the requisites
on Dirac operators \prettyref{subsec:Spectral-invariants-of}, semiclassical
analysis \prettyref{subsec:The-Semi-classical-calculus} and Diophantine
approximation \prettyref{subsec:Diophantine-approximation}. In \prettyref{sec:Microlocal-trace-expansion}
we prove a microlocal trace expansion for the Dirac operator. This
is later used in \prettyref{sec:Eta-remainder-asymptotics} to prove
our main \prettyref{thm: eta semiclassical limit}. In \prettyref{sec:Examples-of-recurrence}
we consider recurrence sets in the particular Anosov \prettyref{subsec:Anosov-flows}
and elliptic \prettyref{subsec:Irrational-elliptic-flows} cases and
prove the Corollaries \prettyref{cor: Cor-DG}, \prettyref{cor:Cor-Berard}
and \prettyref{cor:Cor -Irr}. In the final \prettyref{sec:Quantum-ergodicity}
we prove quantum ergodicity for the Dirac operator.

\section{\label{sec:Preliminaries}Preliminaries}

\subsection{\label{subsec:Spectral-invariants-of} Spectral invariants of the
Dirac operator}

\noindent We begin by stating the requisites about Dirac operators
used in the paper, \cite{Berline-Getzler-Vergne} provides a standard
reference. Let $\left(X,g^{TX}\right)$ be a compact, oriented, Riemannian
manifold of odd dimension $n=2m+1$. It shall be further equipped
with a spin structure, i.e. a $\textrm{Spin}\left(n\right)$ principal
bundle $\textrm{Spin}\left(TX\right)\rightarrow SO\left(TX\right)$
that is an equivariant double covering of the $SO\left(n\right)$
principal bundle $SO\left(TX\right)$ of orthonormal frames in $TX$.
The unique irreducible representation of $\textrm{Spin}\left(n\right)$
gives rise to the associated spin bundle $S=\textrm{Spin}\left(TX\right)\times_{\textrm{Spin}\left(n\right)}S_{2m}$
. The Levi-Civita connection $\nabla^{TX}$ on the tangent bundle
$TX$ lifts to a connection on $\textrm{Spin}\left(TX\right)$ thus
in turn giving rise to the spin connection $\nabla^{S}$ on the spin
bundle $S$. The Clifford multiplication endomorphism $c:T^{*}X\rightarrow S\otimes S^{*}$
arises from the standard representation of the Clifford algebra of
$T^{*}X$ and satisfies
\begin{align*}
c(a)^{2}=-|a|^{2}, & \quad\forall a\in T^{*}X.
\end{align*}
Next choose $\left(L,h^{L}\right)$ a Hermitian line bundle on $X$
along with $A_{0}$ a unitary connection on it. Given any one-form
$a\in\Omega^{1}(X;\mathbb{R})$ on $X$ we may form the family $\nabla^{h}=A_{h}=A_{0}+\frac{i}{h}a$,
$h\in\left(0,1\right]$ of unitary connections on $L$. Denote the
corresponding tensor product connection on $S\otimes L$ by $\nabla^{S\otimes L}\coloneqq\nabla^{S}\otimes1+1\otimes\nabla^{h}$.
Each such connection defines a coupled Dirac operator 
\begin{align*}
D_{h}\coloneqq hD_{A_{0}}+ic\left(a\right)=hc\circ\left(\nabla^{S\otimes L}\right):C^{\infty}(X;S\otimes L)\rightarrow C^{\infty}(X;S\otimes L)
\end{align*}
for $h\in\left(0,1\right]$. The Dirac operator $D_{h}$ being elliptic
and self-adjoint has a discrete spectrum of eigenvalues with finite
multiplicities. 

The eta function of $D_{h}$ is defined by the formula
\begin{align}
\eta\left(D_{h},s\right)\coloneqq & \sum_{\begin{subarray}{l}
\quad\:\lambda\neq0\\
\lambda\in\textrm{Spec}\left(D_{h}\right)
\end{subarray}}\textrm{sign}(\lambda)|\lambda|^{-s}=\frac{1}{\Gamma\left(\frac{s+1}{2}\right)}\int_{0}^{\infty}t^{\frac{s-1}{2}}\textrm{tr}\left(D_{h}e^{-tD_{h}^{2}}\right)dt,\quad\forall s\in\mathbb{C}.\label{eq:eta invariant definition}
\end{align}
We shall use the convention that $\textrm{Spec}(D_{h})$ is a multiset
with each eigenvalue of $D_{h}$ being counted with its multiplicity.
From the Weyl law for elliptic operators, the above series is seen
to converge for $\textrm{Re}(s)>n.$ Further in \cite{APSI,APSIII}
it was shown that the eta function above \prettyref{eq:eta invariant definition}
has a meromorphic continuation to the entire complex $s$-plane and
further has no pole at zero. The eta invariant of the Dirac operator
$D_{h}$ is then defined to be the value of \prettyref{eq:eta invariant definition}
at zero
\begin{equation}
\eta_{h}\coloneqq\eta\left(D_{h},0\right).\label{eq:eta invariant}
\end{equation}
From \prettyref{eq:eta invariant definition} the above is seen formally
to be the signature of the Dirac operator, i.e. the difference between
the number of its positive and negative eigenvalues. A variant of
the above, known as the reduced eta invariant, is defined by including
the zero eigenvalue with an appropriate convention 
\begin{align*}
\bar{\eta}_{h}\coloneqq & \frac{1}{2}\left\{ k_{h}+\eta_{h}\right\} \\
k_{h}\coloneqq & \textrm{dim ker }\left(D_{h}\right).
\end{align*}

Much like the signature of a matrix, the eta invariant is left unchanged
under positive scaling
\begin{equation}
\eta\left(D_{h},0\right)=\eta\left(cD_{h},0\right);\quad\forall c>0.\label{eq: eta scale invariant}
\end{equation}
With
\[
K_{t,h}\left(x,x'\right)\coloneqq D_{h}e^{-tD_{h}^{2}}\left(x,x'\right)\in C^{\infty}\left(X\times X;S\otimes S^{*}\right)
\]
 being the Schwartz kernel of the given heat operator, defined with
respect to the Riemannian volume density, we denote by $\textrm{tr}\left(K_{t,h}\left(x,x\right)\right)$
its point-wise trace along the diagonal. A generalization of the eta
function \prettyref{eq:eta invariant definition} is then given by
\begin{align}
\eta\left(D_{h},s,x\right)= & \frac{1}{\Gamma\left(\frac{s+1}{2}\right)}\int_{0}^{\infty}t^{\frac{s-1}{2}}\textrm{tr}\left(K_{t,h}\left(x,x\right)\right)dt,\label{eq:eta function diagonal}
\end{align}
as a function of $s\in\mathbb{C},\,x\in X$. It was shown in \cite[Thm 2.6]{Bismut-Freed-II}
that the function $\eta\left(D_{h},s,x\right)$ is holomorphic in
$s$ and smooth in $x$ for $\textrm{Re}(s)>-2$. This from the above
\prettyref{eq:eta function diagonal} is clearly equivalent to 
\begin{align}
\textrm{tr}\left(K_{t,h}\right)= & O\left(t^{\frac{1}{2}}\right),\quad\textrm{as}\:t\rightarrow0.\label{eq:pointwise trace asymp as t->0}
\end{align}
The integral
\begin{equation}
\eta_{h}=\int_{0}^{\infty}\frac{1}{\sqrt{\pi t}}\textrm{tr}\left(D_{h}e^{-tD_{h}^{2}}\right)dt\label{eq: eta integral}
\end{equation}
is convergent with its value being the eta invariant \prettyref{eq:eta invariant}.

\subsection{\label{subsec:The-Semi-classical-calculus}The Semi-classical calculus}

Next we state some requisite facts from semi-classical analysis that
shall be used in the paper, \cite{GuilleminSternberg-Semiclassical,Zworski}
provide the standard references. For any $l\times l$ complex matrix
$A=\left(a_{ij}\right)\in\mathfrak{gl}\left(l\right)$, we denote
$\left|A\right|=\max_{ij}\left|a_{ij}\right|$. The symbol space $S^{m}\left(\mathbb{R}^{2n};\mathbb{C}^{l}\right)$
is defined as the space of maps $a:\left(0,1\right]_{h}\rightarrow C^{\infty}\left(\mathbb{R}_{x,\xi}^{2n};\mathfrak{gl}\left(l\right)\right)$
for which each semi-norm
\[
\left\Vert a\right\Vert _{\alpha,\beta}:=\text{sup}_{\substack{x,\xi}
,h}\langle\xi\rangle^{-m+|\beta|}\left|\partial_{x}^{\alpha}\partial_{\xi}^{\beta}a(x,\xi;h)\right|,\quad\alpha,\beta\in\mathbb{N}_{0}^{n},
\]
is finite. The more refined class $a\in S_{{\rm cl\,}}^{m}\left(\mathbb{R}^{2n};\mathbb{C}^{l}\right)$
of classical symbols consists of those for which there exists an $h$-independent
sequence $a_{k}$, $k=0,1,\ldots$ of symbols satisfying
\begin{equation}
a-\left(\sum_{k=0}^{N}h^{k}a_{k}\right)\in h^{N+1}S^{m}\left(\mathbb{R}^{2n};\mathbb{C}^{l}\right),\;\forall N.\label{eq:classical symbolic expansion}
\end{equation}
 Any given $a\in S^{m}\left(\mathbb{R}^{2n};\mathbb{C}^{l}\right),S_{{\rm cl\,}}^{m}\left(\mathbb{R}^{2n};\mathbb{C}^{l}\right)$
in one of the symbol classes above defines a one-parameter family
of operators $a^{W}\in\Psi^{m}\left(\mathbb{R}^{2n};\mathbb{C}^{l}\right),\Psi_{{\rm cl\,}}^{m}\left(\mathbb{R}^{2n};\mathbb{C}^{l}\right)$
via Weyl quantization whose Schwartz kernel is given by 
\[
a^{W}\coloneqq\frac{1}{\left(2\pi h\right)^{n}}\int e^{i\left(x-y\right).\xi/h}a\left(\frac{x+y}{2},\xi;h\right)d\xi.
\]
The above pseudodifferential classes of operators are closed under
the usual operations of composition and formal-adjoint. Furthermore
the classes are invariant under changes of coordinates and basis for
$\mathbb{C}^{l}$. Thus one may invariantly the classes of operators
$\Psi^{m}\left(X;E\right),\Psi_{{\rm cl\,}}^{m}\left(X;E\right)$
acting on $C^{\infty}\left(X;E\right)$ associated to any complex,
Hermitian vector bundle $\left(E,h^{E}\right)$ on a smooth compact
manifold $X$. 

The principal symbol of a classical pseudodifferential operator $A\in\Psi_{{\rm cl\,}}^{m}\left(X;E\right)$
is defined as an element in $\sigma\left(A\right)\in S^{m}\left(X;\textrm{End}\left(E\right)\right)\subset C^{\infty}\left(X;\textrm{End}\left(E\right)\right).$
It is given by $\sigma\left(A\right)=a_{0}$ the leading term in the
symbolic expansion \prettyref{eq:classical symbolic expansion} of
its full Weyl symbol. The principal symbol is multiplicative, commutes
with adjoints and fits into a symbol exact sequence 
\begin{align}
\sigma\left(AB\right) & =\sigma\left(A\right)\sigma\left(B\right)\nonumber \\
\sigma\left(A^{*}\right) & =\sigma\left(A\right)^{*}\nonumber \\
0\rightarrow h\Psi_{{\rm cl\,}}^{m}\left(X;E\right)\rightarrow & \Psi_{{\rm cl\,}}^{m}\left(X;E\right)\xrightarrow{\sigma}S^{m}\left(X;\textrm{End}\left(E\right)\right),\label{eq:properties of symbol}
\end{align}
where the formal adjoints above are defined with respect to the same
Hermitian metric $h^{E}$. The quantization map 
\begin{align}
\textrm{Op}:S^{m}\left(X;\textrm{End}\left(E\right)\right) & \rightarrow\Psi_{{\rm cl\,}}^{m}\left(X;E\right)\quad\textrm{ satisfying }\nonumber \\
\sigma\left(\textrm{Op}\left(a\right)\right) & =a\in S^{m}\left(X;\textrm{End}\left(E\right)\right)\label{eq:quantization map}
\end{align}
gives an inverse to the principal symbol map and we sometimes use
the alternate notation $\textrm{Op}\left(a\right)=a^{W}$. The quantization
map above is however non-canonical and depends on the choice of a
coordinate atlas, with local trivializations for $E$, as well as
a subordinate partition of unity. From the multiplicative property
of the symbol \prettyref{eq:properties of symbol}, it then follows
that $\left[a^{W},b^{W}\right]\in h\Psi_{{\rm cl\,}}^{m-1}\left(X;E\right)$
when $b\in S^{0}\left(X\right)$ is a scalar function. We shall then
define $H_{b}\left(a\right)\coloneqq\frac{i}{h}\sigma\left(\left[a^{W},b^{W}\right]\right)\in S^{m-1}\left(X;\textrm{End}\left(E\right)\right)$
however noting again that its definition depends on the quantization
scheme, and in particular the local trivializations used in defining
$\textrm{Op}$. It is given however by the Poisson bracket $H_{b}\left(a\right)=\left\{ a,b\right\} $
assuming that both sides are computed in the same defining trivialization.

Each $A\in\Psi_{{\rm cl\,}}^{m}\left(X;E\right)$ has a wavefront
set defined invariantly as a subset $WF\left(A\right)\subset\overline{T^{*}X}$
of the fibrewise radial compactification of the cotangent bundle $T^{*}X$.
It is locally defined as follows, $\left(x_{0},\xi_{0}\right)\notin WF\left(A\right)$,
$A=a^{W}$, if and only if there exists an open neighborhood $\left(x_{0},\xi_{0};0\right)\in U\subset\overline{T^{*}X}\times\left(0,1\right]_{h}$
such that $a\in h^{\infty}\left\langle \xi\right\rangle ^{-\infty}C^{k}\left(U;\mathbb{C}^{l}\right)$
for all $k$. The wavefront set satisfies the basic properties under
addition, multiplication and adjoints $WF\left(A+B\right)\subset WF\left(A\right)\cup WF\left(B\right)$,
$WF\left(AB\right)\subset WF\left(A\right)\cap WF\left(B\right)$
and $WF\left(A^{*}\right)=WF\left(A\right)$. The wavefront set $WF\left(A\right)=\emptyset$
is empty if and only if $A\in h^{\infty}\Psi^{-\infty}\left(X;E\right)$
while we say that two operators $A=B$ microlocally on $U\subset\overline{T^{*}X}$
if $WF\left(A-B\right)\cap U=\emptyset$. 

An operator $A\in\Psi_{{\rm cl\,}}^{m}\left(X;E\right)$ is said to
be elliptic if $\left\langle \xi\right\rangle ^{m}\sigma\left(A\right)^{-1}$
exists and is uniformly bounded on $T^{*}X$. If $A\in\Psi_{{\rm cl\,}}^{m}\left(X;E\right)$,
$m>0$, is formally self-adjoint, such that $A+i$ is elliptic, then
it is essentially self-adjoint (with domain $C_{c}^{\infty}\left(X;E\right)$)
as an unbounded operator on $L^{2}\left(X;E\right)$. Beals's lemma
further implies that its resolvent $\left(A-z\right)^{-1}\in\Psi_{{\rm cl\,}}^{-m}\left(X;E\right)$,
$z\in\mathbb{C}$, $\textrm{Im}z\neq0$, exists and is pseudo-differential.
The Helffer-Sjöstrand formula now expresses the function $f\left(A\right)$,
$f\in\mathcal{S}\left(\mathbb{R}\right)$, of such an operator in
terms of its resolvent 
\[
f\left(A\right)=\frac{1}{\pi}\int_{\mathbb{C}}\bar{\partial}\tilde{f}\left(z\right)\left(A-z\right)^{-1}dzd\bar{z},
\]
with $\tilde{f}$ denoting an almost analytic continuation of $f$.
One further has $WF\left(f\left(A\right)\right)\subset\Sigma_{\textrm{spt}\left(f\right)}^{A}\coloneqq\bigcup_{\lambda\in\textrm{spt}\left(f\right)}\Sigma_{\lambda}^{A}$
where 
\begin{equation}
\Sigma_{\lambda}^{A}=\left\{ \left(x,\xi\right)\in T^{*}X|\det\left(\sigma\left(A\right)\left(x,\xi\right)-\lambda I\right)=0\right\} .\label{eq:energy level}
\end{equation}
is classical $\lambda$-energy level of $A$. 

\subsubsection{\label{subsec:The-class-,}The class $\Psi_{\delta}^{m}\left(X\right)$}

We shall also need a more exotic class of scalar symbols $S_{\delta}^{m}\left(\mathbb{R}^{2n};\mathbb{C}\right)$
defined for each $0\leq\delta<\frac{1}{2}$. A function $a:\left(0,1\right]_{h}\rightarrow C^{\infty}\left(\mathbb{R}_{x,\xi}^{2n};\mathbb{C}\right)$
is said to be in this class if and only if 
\begin{equation}
\left\Vert a\right\Vert _{\alpha,\beta}:=\text{sup}_{\substack{x,\xi}
,h}h^{\left(\left|\alpha\right|+\left|\beta\right|\right)\delta}\left|\partial_{x}^{\alpha}\partial_{\xi}^{\beta}a(x,\xi;h)\right|\label{eq: delta pseudodifferential estimates}
\end{equation}
is finite $\forall\alpha,\beta\in\mathbb{N}_{0}^{n}.$ This class
of operators is also closed under the standard operations of composition,
adjoint and changes of coordinates; allowing for the definition of
the same exotic pseudo-differential algebra $\Psi_{\delta}^{m}\left(X\right)$
on a compact manifold. The class $S_{\delta}^{m}\left(X\right)$ is
a family of functions $a:\left(0,1\right]_{h}\rightarrow C^{\infty}\left(T^{*}X;\mathbb{C}\right)$
satisfying the estimates \prettyref{eq: delta pseudodifferential estimates}
in every coordinate chart and induced trivialization. Such a family
can be quantized to $a^{W}\in\Psi_{\delta}^{m}\left(X\right)$ satisfying
$a^{W}b^{W}=\left(ab\right)^{W}+h^{1-2\delta}\Psi_{\delta}^{m+m'-1}\left(X\right)$,
$\frac{i}{h^{1-2\delta}}\sigma\left(\left[a^{W},b^{W}\right]\right)=\left[\left\{ a,b\right\} \right]$
for another $b\in S_{\delta}^{m'}\left(X\right)$. The operators in
$\Psi_{\delta}^{0}\left(X\right)$ are uniformly bounded on $L^{2}\left(X\right)$.
Finally, the wavefront an operator $A\in\Psi_{\delta}^{m}\left(X;E\right)$
is similarly defined and satisfies the same basic properties as before.

\subsection{\label{subsec:Diophantine-approximation}Diophantine approximation}

We finally collect some requisite notions from Diophantine approximation.
These shall be useful later in \prettyref{subsec:Irrational-elliptic-flows}.
We refer to the texts \cite{Bugeaud2004,SchmidtW1980} for the background
and proofs of the statements below.

Let $a\in\mathbb{R}$ be a real number. Its irrationality exponent/measure
is defined by
\begin{align}
\mu\left(a\right) & \coloneqq\inf\left\{ \mu|\left|a-\frac{p}{q}\right|<\frac{1}{q^{\mu}},\,\textrm{has finitely many rational solutions }\frac{p}{q}\in\mathbb{Q}\right\} \label{eq:irrationality exp def 1}\\
 & =\inf\left\{ \mu|\exists C>0\textrm{ s.t. }\left|a-\frac{p}{q}\right|>\frac{C}{q^{\mu}},\,\forall\frac{p}{q}\in\mathbb{Q}\setminus\left\{ a\right\} \right\} \label{eq:irrationality exp def 2}
\end{align}
 where we set $\mu\left(a\right)=\infty$ when the sets above are
empty. 

It is easy to check that $\mu\left(a\right)=1$ for $a\in\mathbb{Q}$
rational. A theorem of Dirichlet shows that $\mu\left(a\right)\geq2$
for $a$ irrational. The map $\mu:\mathbb{R\setminus Q}\rightarrow\left[2,\infty\right)$
is known to be surjective while $\mu\left(a\right)=2$ for almost
all reals with respect to the Lebesgue measure. A number $a$ with
$\mu\left(a\right)=2$ and for which the infimum in \prettyref{eq:irrationality exp def 2}
is attained is called badly approximable. Roth's theorem shows that
$\mu\left(a\right)=2$ for irrational algebraic integers, it is conjectured
however that no such (of degree at least 3) is badly approximable.
Furthermore conversely there are transcendental $a$ with $\mu\left(a\right)=2$,
Euler's number $e$ being such an example. The reals $a$ for which
$\mu\left(a\right)=\infty$ are called a Liouville numbers, these
form a dense albeit Lebesgue measure zero subset of the reals.

A generalization of the above, the irrationality exponent of simultaneous
Diophantine approximation, can be defined for a tuple of real numbers
$\left(a_{1},a_{2},\ldots,a_{n}\right)\in\mathbb{R}^{n}\setminus\left\{ 0\right\} $,
$n\geq2$, via 

\begin{equation}
\nu\left(a_{1},\ldots,a_{n}\right)\coloneqq\inf\left\{ \nu|\exists C>0\textrm{ s.t. }d\left(\left(ta_{1},\ldots,ta_{n}\right);\mathbb{Z}^{n}\right)>Ct^{1-\nu},\,\forall\left(ta_{1},\ldots,ta_{n}\right)\notin\mathbb{Z}^{n}\right\} ,\label{eq:index tuple}
\end{equation}
where $d$ above denotes the distance from the standard lattice $\mathbb{Z}^{n}\subset\mathbb{R}^{n}$.
The above \prettyref{eq:index tuple} is seen to be related to the
exponent \prettyref{eq:irrationality exp def 2} via $\mu\left(a\right)=\nu\left(1,a\right)$
and is scale invariant $\nu\left(ca_{1},ca_{2}\right)=\nu\left(a_{1},a_{2}\right)$,
$c\neq0$. Further it is easy to check that $\nu\left(a_{1},\ldots,a_{n}\right)\leq\min_{i\neq j}\nu\left(a_{i},a_{j}\right)$
and that $\nu\left(1,a_{2},\ldots,a_{n}\right)=1$ for $\left(a_{2},\ldots,a_{n}\right)\in\mathbb{Q}^{n-1}$
. The higher dimensional analogue of Dirichlet's theorem says $1+\frac{1}{n-1}\leq\nu\left(1,a_{2},\ldots,a_{n}\right)$
for $\left(a_{2},\ldots,a_{n}\right)\notin\mathbb{Q}^{n-1}$ with
again the equality holding for almost all tuples. The higher dimensional
analogue of Roth's theorem is due to Schmidt: if $a_{2},\ldots,a_{n}$
are algebraic integers such that $\left\{ 1,a_{2},\ldots,a_{n}\right\} $
are rationally independent then $\nu\left(1,a_{2},\ldots,a_{n}\right)=1+\frac{1}{n-1}$.

\section{\label{sec:Microlocal-trace-expansion}Microlocal trace expansion}

The Dirac operator $D_{h}$ \prettyref{eq:Semiclassical Magnetic Dirac}
has principal symbol and characteristic variety 
\begin{align}
\sigma\left(D_{h}\right)\left(x,\xi\right) & =c\left(\xi+a\right)\in C^{\infty}\left(T^{*}X;\textrm{End}\left(S\right)\right)\label{eq:principal symbol}\\
\Sigma & \coloneqq\left\{ \left(x,\xi\right)|\sigma\left(D_{h}\right)\left(x,\xi\right)=0\right\} \nonumber \\
 & =\left\{ \left(x,\xi\right)|\xi=-a\left(x\right)\right\} \label{eq:characteristic variety}
\end{align}
given by Clifford multiplication and the graph of the one form $a$
respectively.

In \cite[Sec. 7]{Savale2017-Koszul} an on diagonal expansion for
functions $\phi\left(\frac{D_{h}}{\sqrt{h}}\right)$, $\phi\in\mathcal{S}\left(\mathbb{R}\right)$,
of the Dirac operator was proved. Namely we showed the existence of
tempered distributions 
\[
U_{j,p}\left(s\right)\in C^{\infty}\left(X;S\otimes L\otimes\mathcal{S}'\left(\mathbb{R}_{s}\right)\right),\quad
\]
$j\in\mathbb{N}_{0},\,p\in X,$ such that
\begin{equation}
\phi\left(\frac{D_{h}}{\sqrt{h}}\right)\left(p,p\right)=h^{-n/2}\left(\sum_{j=0}^{N}U_{j,p}\left(\phi\right)h^{j/2}\right)+h^{\left(N+1-n\right)/2}O\left(\sum_{k=0}^{n+1}\left\Vert \left\langle \xi\right\rangle ^{N}\hat{\phi}^{\left(k\right)}\right\Vert _{L^{1}}\right)\label{eq: local on diagonal expansion}
\end{equation}
$\forall\phi\in\mathcal{S}\left(\mathbb{R}_{s}\right),p\in X,\,N\in\mathbb{N}$.
Here we show a further microlocal version of this result, considering
functions of $B\phi\left(\frac{D_{h}}{\sqrt{h}}\right)$, $\phi\in\mathcal{S}\left(\mathbb{R}\right)$,
$B\in\Psi_{{\rm cl\,}}^{0}\left(X;S\otimes L\right)$. The leading
part of this expansion shall be shown to concentrate on the characteristic
variety $\Sigma$ \prettyref{eq:characteristic variety}.

We first fix some terminology. Fixing a point $p\in X$ there is an
orthonormal basis $e_{0,p}=\frac{R}{\left|R\right|}$,$\left\{ e_{j,p},\,e_{j+m,p}\right\} _{j=1}^{m}\in R^{\perp}$,
of the tangent space at $p$ consisting of eigenvectors of $\mathfrak{J}_{p}$
with eigenvalues $0,\pm i\mu_{j}$, $j=1,\ldots,m$, such that 
\begin{equation}
da\left(p\right)=\sum_{j=1}^{m}\mu_{j}e_{j,p}^{*}\wedge e_{j+m,p}^{*}.\label{eq: da diagonal form}
\end{equation}
Using the parallel transport from this basis, fix a geodesic coordinate
system $\left(x_{0},\ldots,x_{2m}\right)$ on an open neighborhood
of $p\in\Omega$. Let $e_{j}=w_{j}^{k}\partial_{x_{k}}$, $0\leq j\leq2m$,
be the local orthonormal frame of $TX$ obtained by parallel transport
of $e_{j,p}=\left.\partial_{x_{j}}\right|_{p}$, $0\leq j\leq2m$,
along geodesics. We then have 
\begin{align*}
w_{j}^{k}g_{kl}w_{r}^{l} & =\delta_{jr},\\
\left.w_{j}^{k}\right|_{p} & =\delta_{j}^{k},
\end{align*}
with $g_{kl}$ being the components of the metric in these coordinates.
Choose an orthonormal basis $\left\{ s_{j,p}\right\} _{j=1}^{2^{m}}$for
$S_{p}$ in which Clifford multiplication
\begin{equation}
\left.c\left(e_{j}\right)\right|_{p}=\gamma_{j}\label{eq: Clifford multiplication standard}
\end{equation}
is standard. Choose an orthonormal basis $\mathtt{l}_{p}$ for $L_{p}$.
Parallel transport the bases $\left\{ s_{j,p}\right\} _{j=1}^{2^{m}}$,
$\mathtt{l}_{p}$ along geodesics using the spin connection $\nabla^{S}$
and unitary family of connections $\nabla^{h}=A_{0}+\frac{i}{h}a$
to obtain trivializations $\left\{ s_{j}\right\} _{j=1}^{2^{m}}$,
$\mathtt{l}$ of $S$, $L$ on $\Omega$. Since Clifford multiplication
is parallel, the relation \prettyref{eq: Clifford multiplication standard}
now holds on $\Omega$. The connection $\nabla^{S\otimes L}=\nabla^{S}\otimes1+1\otimes\nabla^{h}$
can be expressed in this frame and these coordinates as
\begin{equation}
\nabla^{S\otimes L}=d+A_{j}^{h}dx^{j}+\Gamma_{j}dx^{j},
\end{equation}
 where each $A_{j}^{h}$ is a Christoffel symbol of $\nabla^{h}$
and each $\Gamma_{j}$ is a Christoffel symbol of the spin connection
$\nabla^{S}$. Since the section $\mathtt{l}$ is obtained via parallel
transport along geodesics, the connection coefficient $A_{j}^{h}$
can be written in terms of the curvature $F_{jk}^{h}dx^{j}\wedge dx^{k}$
of $\nabla^{h}$ 
\begin{equation}
A_{j}^{h}(x)=\int_{0}^{1}d\rho\left(\rho x^{k}F_{jk}^{h}\left(\rho x\right)\right).
\end{equation}
The dependence of the curvature coefficients $F_{jk}^{h}$ on the
parameter $h$ is seen to be linear in $\frac{1}{h}$ via 
\begin{equation}
F_{jk}^{h}=F_{jk}^{0}+\frac{i}{h}\left(da\right){}_{jk}\label{eq:curvature linear in 1/h}
\end{equation}
despite the fact that they are expressed in the $h$ dependent frame
$\mathtt{l}$. This is because a gauge transformation from an $h$
independent frame $\mathtt{l}_{0}$ into $\mathtt{l}$ changes the
curvature coefficient by conjugation. Since $L$ is a line bundle
this is conjugation by a function and hence does not change the coefficient.
Furthermore, the coefficients in the Taylor expansion of \prettyref{eq:curvature linear in 1/h}
at $0$ can be expressed in terms of the covariant derivatives $\left(\nabla^{A_{0}}\right)^{l}F_{jk}^{0},$
$\left(\nabla^{A_{0}}\right)^{l}\left(da\right){}_{jk}$ evaluated
at $p$. 

The gauge transformation relating the $h$-dependent frame $\mathtt{l}$
with the $h$-independent frame $\mathtt{l}_{0}$ is given by 
\begin{align}
\mathtt{l} & =c\exp\left\{ \underbrace{-\int_{0}^{1}\rho\,d\rho x^{j}A_{0,j}\left(\rho x\right)-\frac{i}{h}\underbrace{\int_{0}^{1}\rho\,d\rho x^{j}a_{j}\left(\rho x\right)}_{\coloneqq\varphi}}_{\varphi_{h}}\right\} \mathtt{l}_{0}\nonumber \\
\varphi & =x^{j}a_{j}\left(0\right)+O\left(x^{2}\right)\label{eq:relation two frames}
\end{align}
where $A_{0}+\frac{i}{h}a$ denotes the connection form for $\nabla^{h}$
in the $\mathtt{l}_{0}$ trivialization while $c>0$ is a constant
which can be taken to be $1$ by an appropriate choice of $\mathtt{l}_{0}$.

Next, using the Taylor expansion
\begin{equation}
\left(da\right){}_{jk}=\left(da\right){}_{jk}\left(0\right)+x^{l}a_{jkl},\label{eq: Taylor expansion da}
\end{equation}
we see that the connection $\nabla^{S\otimes L}$ has the form
\begin{equation}
\nabla^{S\otimes L}=d+\left[\frac{i}{h}\left(\frac{x^{k}}{2}\left(da\right){}_{jk}\left(0\right)+x^{k}x^{l}A_{jkl}\right)+x^{k}A_{jk}^{0}+\Gamma_{j}\right]dx^{j}\label{eq: connection in geodesic}
\end{equation}
where 
\begin{eqnarray*}
A_{jk}^{0} & = & \int_{0}^{1}d\rho\left(\rho F_{jk}^{0}\left(\rho x\right)\right)\\
A_{jkl} & = & \int_{0}^{1}d\rho\left(\rho a_{jkl}\left(\rho x\right)\right)
\end{eqnarray*}
and $\Gamma_{j}$ are all independent of $h$. Finally from \prettyref{eq: Clifford multiplication standard}
and \prettyref{eq: connection in geodesic} we may write down the
expression for the Dirac operator \prettyref{eq:Semiclassical Magnetic Dirac},
given as $D_{h}=hc\circ\left(\nabla^{S\otimes L}\right)$, in terms
of the chosen frame and coordinates to be 
\begin{align}
D_{h} & =\gamma^{r}w_{r}^{j}\left[h\partial_{x_{j}}+i\frac{x^{k}}{2}\left(da\right){}_{jk}\left(0\right)+ix^{k}x^{l}A_{jkl}+h\left(x^{k}A_{jk}^{0}+\Gamma_{j}\right)\right]\label{eq: Dirac operator geodesic coordinates}\\
 & =\gamma^{r}\left[w_{r}^{j}h\partial_{x_{j}}+iw_{r}^{j}\frac{x^{k}}{2}\left(da\right){}_{jk}\left(0\right)+\frac{1}{2}hg^{-\frac{1}{2}}\partial_{x_{j}}\left(g^{\frac{1}{2}}w_{r}^{j}\right)\right]+\\
 & \gamma^{r}\left[iw_{r}^{j}x^{k}x^{l}A_{jkl}+hw_{r}^{j}\left(x^{k}A_{jk}^{0}+\Gamma_{j}\right)-\frac{1}{2}hg^{-\frac{1}{2}}\partial_{x_{j}}\left(g^{\frac{1}{2}}w_{r}^{j}\right)\right]\in\Psi_{{\rm cl\,}}^{1}\left(\Omega_{s}^{0};\mathbb{C}^{2^{m}}\right)\nonumber 
\end{align}
In the second expression above, both square brackets are self-adjoint
with respect to the Riemannian density $e^{1}\wedge\ldots\wedge e^{n}=\sqrt{g}dx\coloneqq\sqrt{g}dx^{1}\wedge\ldots\wedge dx^{n}$,
where $g=\det\left(g_{ij}\right)$. Again one may obtain an expression
self-adjoint with respect to the Euclidean density $dx$ in the framing
$g^{\frac{1}{4}}u_{j}\otimes\mathtt{l},1\leq j\leq2^{m}$. The result
being an addition of the term $h\gamma^{j}w_{j}^{k}g^{-\frac{1}{4}}\left(\partial_{x_{k}}g^{\frac{1}{4}}\right)$. 

Let $i_{g}$ be the injectivity radius of $g^{TX}$ . Define the cutoff
$\chi\in C_{c}^{\infty}\left(-1,1\right)$ such that $\chi=1$ on
$\left(-\frac{1}{2},\frac{1}{2}\right)$. We now modify the functions
$w_{j}^{k}$, outside the ball $B_{i_{g}/2}\left(p\right)$, such
that $w_{j}^{k}=\delta_{j}^{k}$ (and hence $g_{jk}=\delta_{jk\mathtt{l}_{0}}$)
are standard outside the ball $B_{i_{g}}\left(p\right)$ of radius
$i_{g}$ centered at $p$. This again gives 
\begin{align}
\mathbb{D} & =\gamma^{r}\left[w_{r}^{j}h\partial_{x_{j}}+iw_{r}^{j}\frac{x^{k}}{2}\left(da\right){}_{jk}\left(0\right)+\frac{1}{2}hg^{-\frac{1}{2}}\partial_{x_{j}}\left(g^{\frac{1}{2}}w_{r}^{j}\right)\right]+\label{eq: Localized Dirac}\\
 & \chi\left(\left|x\right|/i_{g}\right)\gamma^{r}\left[iw_{r}^{j}x^{k}x^{l}A_{jkl}+hw_{r}^{j}\left(x^{k}A_{jk}^{0}+\Gamma_{j}\right)-\frac{1}{2}hg^{-\frac{1}{2}}\partial_{x_{j}}\left(g^{\frac{1}{2}}w_{r}^{j}\right)\right]\nonumber \\
 & \qquad\qquad\qquad\qquad\qquad\qquad\qquad\qquad\qquad\in\Psi_{{\rm cl\,}}^{1}\left(\mathbb{R}^{n};\mathbb{C}^{2^{m}}\right)\nonumber 
\end{align}
as a well defined operator on $\mathbb{R}^{n}$ formally self adjoint
with respect to $\sqrt{g}dx$. Again $\mathbb{D}+i$ being elliptic
in the class $S^{0}\left(m\right)$ for the order function 
\[
m=\sqrt{1+g^{jl}\left(\xi_{j}+\frac{x^{k}}{2}\left(da\right)_{jk}\left(0\right)\right)\left(\xi_{l}+\frac{x^{r}}{2}\left(da\right)_{lr}\left(0\right)\right)},
\]
the operator $\mathbb{\mathbb{D}}$ is essentially self adjoint. 

Letting $H\left(s\right)\in\mathcal{S}'\left(\mathbb{R}_{s}\right)$
denote the Heaviside distribution, below we  define the following
elementary tempered distributions 
\begin{align}
v_{a;p}\left(s\right) & \coloneqq s^{a},\;a\in\mathbb{N}_{0}\label{eq: elementary distribution 1}\\
v_{a,b,c,\varLambda;p}\left(s\right) & \coloneqq\partial_{s}^{a}\left[\left|s\right|s^{b}\left(s^{2}-2\varLambda\right)^{c-\frac{1}{2}}H\left(s^{2}-2\varLambda\right)\right],\label{eq: elementary distribution 2}\\
 & \;\qquad\qquad\qquad\qquad\left(a,b,c;\varLambda\right)\in\mathbb{N}_{0}\times\mathbb{Z}\times\mathbb{N}_{0}\times\mu.\left(\mathbb{N}_{0}^{m}\setminus0\right)\nonumber 
\end{align}
as in \cite[Sec. 7]{Savale2017-Koszul}.

We now have the following. 
\begin{thm}
\label{thm: microlocal trace expansion} Let $B\in\Psi_{{\rm cl\,}}^{0}\left(X;S\otimes L\right)$
be a classical pseudodifferential operator. 

There exist tempered distributions $U_{B,j,x,x'}\in C^{\infty}\left(TX\oplus TX;\textrm{End}\left(S\otimes L\right)\otimes\mathcal{S}'\left(\mathbb{R}_{s}\right)\right)$,
$j=0,1,2,\ldots$, such that one has the following off-diagonal expansion
for the Schwartz kernel
\begin{align}
B\phi\left(\frac{D_{h}}{\sqrt{h}}\right)\left(x_{h},x'_{h}\right) & =h^{-n/2}\left(\sum_{j=0}^{N}e^{\varphi_{h}\left(x\right)}U_{B,j,x,x'}\left(\phi\right)e^{-\varphi_{h}\left(x'\right)}h^{j/2}\right)\label{eq: microlocal trace expansion}\\
 & \qquad\qquad\qquad+h^{\left(N+1-n\right)/2}O\left(\sum_{k=0}^{n+1}\left\Vert \left\langle \xi\right\rangle ^{2N}\hat{\phi}^{\left(k\right)}\right\Vert _{L^{1}}\right),\\
\textrm{where }\quad\left(x_{h},x'_{h}\right) & \coloneqq\left(\exp_{x}\left(\sqrt{h}x\right),\exp_{x}\left(\sqrt{h}x'\right)\right),\label{eq:off diagonal points}
\end{align}
for each $N\in\mathbb{N}$, $x,x'\in T_{p}X$, $p\in X$ and $\phi\in\mathcal{S}\left(\mathbb{R}_{s}\right)$. 

Each coefficient of the expansion above can be written in terms of
the distributions \prettyref{eq: elementary distribution 1}, \prettyref{eq: elementary distribution 2}
\begin{equation}
U_{B,j,x,x'}\left(s\right)=\sum_{a\leq2j+2}c_{B,j;a}\left(x,x'\right)s^{a}+\sum_{\begin{subarray}{l}
\varLambda\in\mu.\left(\mathbb{N}_{0}^{m}\setminus0\right).\\
a,\left|b\right|,c\leq4j+4
\end{subarray}}c_{B,j;a,b,c,\varLambda}\left(x,x'\right)v_{a,b,c,\varLambda;p}\left(s\right),\label{eq: trace distribution structure}
\end{equation}
for some $h-$independent sections $c_{B,j;a},c_{B,j;a,b,c,\varLambda}\in C^{\infty}\left(TX\oplus TX;\textrm{End}\left(S\otimes L\right)\right)$. 

Moreover, the leading coefficient at the origin is given by 
\begin{equation}
U_{B,0,0,0}=\left(\left.b_{0}\right|_{\Sigma}\right).U_{0,p}\label{eq:leading coefficient}
\end{equation}
in terms of the leading coefficient of \prettyref{eq: local on diagonal expansion}
and the principal symbol $b_{0}=\sigma\left(B\right)$.
\end{thm}

\begin{proof}
We begin by writing $\phi=\phi_{0}+\phi_{1}$, with 
\begin{eqnarray*}
\phi_{0}\left(s\right) & = & \frac{1}{2\pi}\int_{\mathbb{R}}e^{i\xi s}\hat{\phi}\left(\xi\right)\chi\left(\frac{2\xi\sqrt{h}}{i_{g}}\right)d\xi\\
\phi_{1}\left(s\right) & = & \frac{1}{2\pi}\int_{\mathbb{R}}e^{i\xi s}\hat{\phi}\left(\xi\right)\left[1-\chi\left(\frac{2\xi\sqrt{h}}{i_{g}}\right)\right]d\xi
\end{eqnarray*}
given by Fourier inversion. 

First considering $\phi_{1}$, integration by parts gives the estimate
\[
\left|s^{n+1}\phi_{1}\left(s\right)\right|\leq C_{N}h^{\frac{N-1}{2}}\left(\sum_{k=0}^{n+1}\left\Vert \xi^{2N}\hat{\phi}^{\left(k\right)}\right\Vert _{L^{1}}\right),
\]
$\forall N\in\mathbb{N}$. Hence,
\[
\left\Vert D_{h}^{n+1-a}B\phi_{1}\left(\frac{D_{h}}{\sqrt{h}}\right)D_{h}^{a}\right\Vert _{L^{2}\rightarrow L^{2}}\leq C_{N}h^{\frac{n+N}{2}}\left(\sum_{k=0}^{n+1}\left\Vert \xi^{2N}\hat{\phi}^{\left(k\right)}\right\Vert _{L^{1}}\right),
\]
$\forall N\in\mathbb{N},\:\forall a=0,\ldots,n+1$. Semi-classical
elliptic estimate and Sobolev's inequality now give the estimate 
\begin{equation}
\left|B\phi_{1}\left(\frac{D_{h}}{\sqrt{h}}\right)\right|_{C^{0}\left(X\times X\right)}\leq C_{N}h^{\frac{n+N}{2}}\left(\sum_{k=0}^{n+1}\left\Vert \xi^{2N}\hat{\phi}^{\left(k\right)}\right\Vert _{L^{1}}\right)\label{eq: Schw ker localizes}
\end{equation}
$\forall N\in\mathbb{N}$, on the Schwartz kernel.

Next, considering $\phi_{0}$, we first use the change of variables
$\alpha=\xi\sqrt{h}$ to write 
\[
\phi_{0}\left(\frac{D_{h}}{\sqrt{h}}\right)=\frac{1}{2\pi\sqrt{h}}\int_{\mathbb{R}}e^{i\alpha\left(D_{A_{0}}+ih^{-1}c\left(a\right)\right)}\hat{\phi}\left(\frac{\alpha}{\sqrt{h}}\right)\chi\left(\frac{2\alpha}{i_{g}}\right)d\alpha.
\]
Now since $D_{h}=\mathbb{D}$ on $B_{i_{g}/2}\left(p\right)$, we
may use the finite propagation speed of the wave operators $e^{i\alpha h^{-1}D_{h}}$,
$e^{i\alpha h^{-1}\mathbb{D}}$ and microlocality of $B\in\Psi_{{\rm cl\,}}^{0}\left(X\right)$
to conclude 
\begin{equation}
B\phi_{0}\left(\frac{D_{h}}{\sqrt{h}}\right)\left(\sqrt{h}x,\sqrt{h}x'\right)=B\phi_{0}\left(\frac{\mathbb{D}}{\sqrt{h}}\right)\left(\sqrt{h}x,\sqrt{h}x'\right)\label{eq: finite propagation}
\end{equation}
for $h\ll1$. The right hand side above is defined using functional
calculus of self-adjoint operators, with standard local elliptic regularity
arguments implying the smoothness of its Schwartz kernel. By virtue
of \prettyref{eq: Schw ker localizes}, a similar estimate for $B\phi_{1}\left(\frac{\mathbb{D}}{\sqrt{h}}\right)$,
and \prettyref{eq: finite propagation} it now suffices to consider
$B\phi\left(\frac{\mathbb{D}}{\sqrt{h}}\right)$.

We now introduce the rescaling operator $\mathscr{R}:C^{\infty}\left(\mathbb{R}^{n};\mathbb{C}^{2^{m}}\right)\rightarrow C^{\infty}\left(\mathbb{R}^{n};\mathbb{C}^{2^{m}}\right)$,
$\left(\mathscr{R}s\right)\left(x\right)\coloneqq s\left(\frac{x}{\sqrt{h}}\right)$.
Conjugation by $\mathscr{R}$ amounts to the rescaling of coordinates
$x\rightarrow x\sqrt{h}$. A Taylor expansion in \prettyref{eq: Localized Dirac}
now gives the existence of classical ($h$-independent) self-adjoint,
first-order differential operators $\mathrm{\mathtt{D}}_{j}=a_{j}^{k}\left(x\right)\partial_{x_{k}}+b_{j}\left(x\right)$,
$j=0,1\ldots$, with polynomial coefficients (of degree at most $j+1$)
as well as $h$-dependent self-adjoint, first-order differential operators
$\mathrm{E}_{j}=\sum_{\left|\alpha\right|=N+1}x^{\alpha}\left[c_{j,\alpha}^{k}\left(x;h\right)\partial_{x_{k}}+d_{j,\alpha}\left(x;h\right)\right]$,
$j=0,1\ldots$, with uniformly $C^{\infty}$ bounded coefficients
$c_{j,\alpha}^{k},\,d_{j,\alpha}$ such that 
\begin{eqnarray}
\mathscr{R}\mathbb{D}\mathscr{R}^{-1} & = & \sqrt{h}\mathrm{\mathtt{D}}\quad\textrm{ with}\label{eq: rescaled Dirac}\\
\mathrm{\mathtt{D}} & = & \left(\sum_{j=0}^{N}h^{j/2}\mathrm{\mathtt{D}}_{j}\right)+h^{\left(N+1\right)/2}\mathrm{E}_{N+1},\;\forall N.\label{eq: Taylor expansion Dirac}
\end{eqnarray}
The coefficients of the polynomials $a_{j}^{k}\left(x\right),\,b_{j}\left(x\right)$
again involve the covariant derivatives of the curvatures $F^{TX},F^{A_{0}}$
and $da$ evaluated at $p$. Furthermore, the leading term in \prettyref{eq: Taylor expansion Dirac}
is easily computed 
\begin{align}
\mathrm{\mathtt{D}}_{0} & =\gamma^{j}\left[\partial_{x_{j}}+i\frac{x^{k}}{2}\left(da\right){}_{jk}\left(0\right)\right]\label{eq: leading term rescaled Dirac}\\
 & =\gamma^{0}\partial_{x_{0}}+\underbrace{\gamma^{j}\left[\partial_{x_{j}}+\frac{i\mu_{j}\left(p\right)}{2}x_{j+m}\right]+\gamma^{j+m}\left[\partial_{x_{j+m}}-\frac{i\mu_{j}\left(p\right)}{2}x_{j}\right]}_{\coloneqq\mathrm{\mathtt{D}}_{00}}\label{eq: leading term rescaled Dirac-1}
\end{align}
using \prettyref{eq: da diagonal form}, \prettyref{eq: Taylor expansion da}.
It is now clear from \prettyref{eq: rescaled Dirac} that 
\begin{equation}
\phi\left(\frac{\mathbb{D}}{\sqrt{h}}\right)\left(x,x'\right)=h^{-n/2}\phi\left(\mathrm{\mathtt{D}}\right)\left(\frac{x}{\sqrt{h}},\frac{x'}{\sqrt{h}}\right).\label{eq: rescaling Schw kernel}
\end{equation}
Next, let $I_{j}=\left\{ k=\left(k_{0},k_{1},\ldots\right)|k_{\alpha}\in\mathbb{N},\:\sum k_{\alpha}=j\right\} $
denote the set of partitions of the integer $j$ and set 
\begin{equation}
\mathtt{C}_{j}^{z}=\sum_{k\in I_{j}}\left(z-\mathrm{\mathtt{D}}_{0}\right)^{-1}\left[\Pi_{\alpha}\left[\mathrm{\mathtt{D}}_{k_{\alpha}}\left(z-\mathrm{\mathtt{D}}_{0}\right)^{-1}\right]\right].\label{eq: jth term kernel expansion}
\end{equation}
Local elliptic regularity estimates again give 
\begin{align*}
\left(z-\mathrm{\mathtt{D}}\right)^{-1} & =O_{L_{\textrm{loc}}^{2}\rightarrow L_{\textrm{loc}}^{2}}\left(\left|\textrm{Im}z\right|^{-1}\right)\quad\textrm{ and }\\
\mathtt{C}_{j}^{z} & =O_{L_{\textrm{loc}}^{2}\rightarrow L_{\textrm{loc}}^{2}}\left(\left|\textrm{Im}z\right|^{-2j-2}\right),
\end{align*}
$j=0,1,\ldots$. A straightforward computation using \prettyref{eq: Taylor expansion Dirac}
then yields 
\begin{equation}
\left(z-\mathrm{\mathtt{D}}\right)^{-1}-\left(\sum_{j=0}^{N}h^{j/2}\mathtt{C}_{j}^{z}\right)=O_{L_{\textrm{loc}}^{2}\rightarrow L_{\textrm{loc}}^{2}}\left(\left(\left|\textrm{Im}z\right|^{-2}h^{\frac{1}{2}}\right)^{N+1}\right).\label{eq: resolvent expansion}
\end{equation}
A similar expansion as \prettyref{eq: Taylor expansion Dirac} for
the operator $\left(1+\mathrm{\mathtt{\mathrm{\mathtt{D}}}}^{2}\right)^{\left(n+1\right)/2}\left(z-\mathrm{\mathrm{\mathtt{D}}}\right)$
also gives the bounds 
\begin{equation}
\left(1+\mathrm{\mathtt{\mathrm{\mathtt{D}}}}^{2}\right)^{-\left(n+1\right)/2}\left(z-\mathrm{\mathrm{\mathtt{D}}}\right)^{-1}-\left(\sum_{j=0}^{N}h^{j/2}\mathtt{C}_{j,n+1}^{z}\right)=O_{H_{\textrm{loc}}^{s}\rightarrow H_{\textrm{loc}}^{s+n+1}}\left(\left(\left|\textrm{Im}z\right|^{-2}h^{\frac{1}{2}}\right)^{N+1}\right)\label{eq: rescaled resolvent expansion}
\end{equation}
$\forall s\in\mathbb{R}$, for classical ($h$-independent) Sobolev
spaces $H_{\textrm{loc}}^{s}$. Here each $\mathtt{C}_{j,n+1}^{z}=O_{H_{\textrm{loc}}^{s}\rightarrow H_{\textrm{loc}}^{s+n+1}}\left(\left|\textrm{Im}z\right|^{-2j-2}\right)$
with the leading term being
\[
\mathtt{C}_{0,n+1}^{z}=\left(1+\mathrm{\mathrm{\mathtt{D}}}_{0}^{2}\right)^{-\left(n+1\right)/2}\left(z-\mathrm{\mathrm{\mathtt{D}}}_{0}\right)^{-1}.
\]
Finally, plugging the expansion \prettyref{eq: rescaled resolvent expansion}
into the Helffer-Sjöstrand formula 
\[
\phi\left(\mathrm{\mathrm{\mathtt{D}}}\right)=-\frac{1}{\pi}\int_{\mathbb{C}}\bar{\partial}\tilde{\varrho}\left(z\right)\left(1+\mathrm{\mathrm{\mathtt{D}}}^{2}\right)^{-\left(n+1\right)/2}\left(z-\mathrm{\mathtt{\mathrm{\mathtt{D}}}}\right)^{-1}dzd\bar{z},
\]
with $\varrho\left(s\right)\coloneqq\left\langle s\right\rangle ^{n+1}\phi\left(s\right)$,
gives 
\begin{equation}
\phi\left(\mathrm{\mathtt{D}}\right)\left(x,x'\right)=\left(\sum_{j=0}^{N}h^{j/2}U_{j,p}\left(\phi\right)\left(x,x'\right)\right)+h^{\left(N+1\right)/2}O\left(\sum_{k=0}^{n+1}\left\Vert \left\langle \xi\right\rangle ^{N}\hat{\phi}^{\left(k\right)}\right\Vert _{L^{1}}\right),\label{eq: Diagonal kernel expansion}
\end{equation}
$\forall y\in\mathbb{R}^{n}$, using Sobolev's inequality. Here each
\begin{equation}
U_{j,p}\left(\phi\right)\left(x,x'\right)=-\frac{1}{\pi}\int_{\mathbb{C}}\bar{\partial}\tilde{\varrho}\left(z\right)\mathtt{C}_{j,n+1}^{z}\left(x,x'\right)dzd\bar{z}\in\textrm{End}S_{p}^{TX}\label{eq: jth coefficient functional kernal}
\end{equation}
defines a smooth family (in $p\in X$) of distributions $U_{j}$ and
the remainder term in \prettyref{eq: Diagonal kernel expansion} comes
from the estimate $\bar{\partial}\tilde{\varrho}=O\left(\left|\textrm{Im}z\right|^{2N+2}\sum_{k=0}^{n+1}\left\Vert \left\langle \xi\right\rangle ^{2N}\hat{\phi}^{\left(k\right)}\right\Vert _{L^{1}}\right)$
on the almost analytic continuation (cf. \cite{Zworski} Sec. 3.1).
Substituting \prettyref{eq: Diagonal kernel expansion} into \prettyref{eq: rescaling Schw kernel}
gives the off-diagonal expansion for the Schwartz kernel of $\phi\left(\frac{D_{h}}{\sqrt{h}}\right)$.
However we importantly note that the expansion is expressed in the
$h-$dependent frame $\mathtt{l}$ obtained by parallel transport.
That is to say, using \prettyref{eq:relation two frames} to change
into the $h$ independent frame $\mathtt{l}_{0}$, that it is an expansion
for 
\begin{align}
K_{D,\phi}^{h}\left(x,x'\right) & \coloneqq e^{-\varphi_{h}\left(x\sqrt{h}\right)}\phi\left(\frac{D_{h}}{\sqrt{h}}\right)\left(x\sqrt{h},x'\sqrt{h}\right)e^{\varphi_{h}\left(x'\sqrt{h}\right)}.\nonumber \\
 & =h^{-n/2}\sum_{j=0}^{N}h^{j/2}U_{j,p}\left(\phi\right)\left(x,x'\right)+O\left(h^{\left(N+1-n\right)/2}\right).\label{eq:off diagonal expansion for kernel D}
\end{align}

Next to describe the on diagonal expansion for the Schwartz kernel
of the composition $B\phi\left(\frac{D_{h}}{\sqrt{h}}\right)$, one
has to again note that the Schwartz kernel of the pseudodifferential
operator $B=\frac{1}{\left(2\pi h\right)^{n}}\int e^{i\left(x-y\right)\frac{\xi}{h}}b\left(x,\xi;h\right)d\xi$
is expressed in an $h-$independent frame $\mathtt{l}_{0}$. Thus
we may write the composition

\begin{align}
 & e^{-\varphi_{h}\left(x\sqrt{h}\right)}B\phi\left(\frac{D_{h}}{\sqrt{h}}\right)\left(x\sqrt{h},x'\sqrt{h}\right)e^{\varphi_{h}\left(x'\sqrt{h}\right)}\nonumber \\
 & =\frac{1}{\left(2\pi h\right)^{n}}\int d\xi dy\,e^{-\frac{i\left(x\sqrt{h}-y\right).\xi}{h}}e^{-\varphi_{h}\left(x\sqrt{h}\right)}b\left(x\sqrt{h},\xi;h\right)e^{\varphi_{h}\left(y\right)}\underbrace{e^{-\varphi_{h}\left(y\right)}\phi\left(\frac{D_{h}}{\sqrt{h}}\right)\left(y,x'\sqrt{h}\right)e^{\varphi_{h}\left(x'\sqrt{h}\right)}}_{\eqqcolon K_{D,\phi}^{h}\left(\frac{y}{\sqrt{h}},x'\right)}\\
 & =\frac{1}{\left(2\pi h\right)^{n}}\int d\xi dy'\,e^{-\frac{i\left(x-y'\right).\xi}{\sqrt{h}}}e^{-\varphi_{h}\left(x\sqrt{h}\right)}b\left(x\sqrt{h},\xi;h\right)e^{\varphi_{h}\left(y'\sqrt{h}\right)}K_{D,\phi}^{h}\left(y',x'\right)\nonumber \\
 & =\frac{1}{\left(2\pi h\right)^{n}}\int d\xi dy'\,e^{-\frac{i\left(x-y'\right).\left(\xi+a\left(0\right)\right)}{\sqrt{h}}}b\left(x\sqrt{h},\xi;h\right)\underbrace{e^{-\varphi_{h}\left(x\sqrt{h}\right)-\frac{ix.a\left(0\right)}{\sqrt{h}}}}_{\eqqcolon e\left(x;\sqrt{h}\right)^{-1}}\underbrace{e^{\varphi_{h}\left(y'\sqrt{h}\right)+\frac{iy'.a\left(0\right)}{\sqrt{h}}}}_{\eqqcolon e\left(y';\sqrt{h}\right)}K_{D,\phi}^{h}\left(y',x'\right)\nonumber \\
 & =\frac{1}{\left(2\pi\sqrt{h}\right)^{n}}\int d\xi'dy'\,e^{-i\left(x-y'\right).\xi'}b\left(x\sqrt{h},\xi'\sqrt{h}-a;h\right)e\left(x;\sqrt{h}\right)^{-1}e\left(y';\sqrt{h}\right)K_{D,\phi}^{h}\left(y',x'\right)\label{eq:last line in demo}
\end{align}
having used to the two changes of variables $y'=\frac{y}{\sqrt{h}}$
and $\xi'=\frac{\xi+a\left(0\right)}{\sqrt{h}}$.

Now the exponential term in the last line above has an asymptotic
expansion in powers of $h^{1/2}$
\begin{align}
e\left(y;\sqrt{h}\right) & \coloneqq e^{\varphi_{h}\left(y\sqrt{h}\right)+\frac{iy.a\left(0\right)}{\sqrt{h}}}\sim1+\sum_{j=1}^{\infty}h^{j/2}e_{j}\left(y\right)\quad\textrm{ with each }\label{eq: expanding exponential}\\
e_{j}\left(y\right) & =\sum_{\left|\alpha\right|\leq j}y^{\alpha}e_{j,\alpha}
\end{align}
being a polynomial of degree atmost $j$, on account of \prettyref{eq:relation two frames}.
Plugging the above \prettyref{eq: expanding exponential}, the classical
symbolic expansion for $b\left(x,\xi;h\right)$ and \prettyref{eq:off diagonal expansion for kernel D}
gives the expansion \prettyref{eq: microlocal trace expansion} as
well as the calculation of the leading term \prettyref{eq:leading coefficient}.

To elucidate the structure \prettyref{eq: trace distribution structure}
of the coefficients, note that the symbolic/Taylor expansion of the
total symbol for $b$ in \prettyref{eq:last line in demo} yields
\begin{align}
b\left(x\sqrt{h},\xi'\sqrt{h}-a\left(0\right);h\right) & \sim\sum_{j=0}^{\infty}h^{j/2}b_{j}\left(x,\xi'\right)\quad\textrm{ with each }\label{eq:amplitude expansion}\\
b_{j}\left(\xi\right) & =\sum_{\left|\alpha\right|+\left|\beta\right|\leq j}x^{\alpha}\left(\xi'\right)^{\beta}b_{j,\alpha,\beta}\nonumber 
\end{align}
being polynomial in $\xi'$ of degree at most $j$. Plugging the last
equation above into \prettyref{eq:last line in demo} then gives that
each coefficient in \prettyref{eq: microlocal trace expansion} is
a sum of the form 
\[
U_{B,j,p}\left(\phi\right)\left(x,x'\right)=\sum_{\left|\alpha\right|+\left|\beta\right|+j'\leq j}c_{\alpha,\beta}x^{\alpha}\partial_{x}^{\beta}\left[U_{j',p}\left(x,x'\right)\left(\phi\right)\right].
\]
From here it follows that the distributions $U_{B,j,p}$ have the
same type of structure as was shown for $U_{j,p}$ in \cite[Prop. 7.2]{Savale2017-Koszul},
cf. \cite[Eq. 7.33]{Savale2017-Koszul} and following ones therein.
\end{proof}
By integrating the pointwise traces of the the distributions in \prettyref{eq: microlocal trace expansion},
we may further define 
\begin{eqnarray}
u_{B,j} & = & \int_{X}u_{B,j,p}dx,\quad\textrm{ with }\nonumber \\
u_{B,j,p} & \coloneqq & \textrm{tr }U_{B,j,p}\in C^{\infty}\left(X;\mathcal{S}'\left(\mathbb{R}_{s}\right)\right)\label{eq: pointwise trace distribution}
\end{eqnarray}
for $j=0,1,\ldots$. As with $U_{B,j,p}$, the distributions $u_{B,j}$
also have the same structure \prettyref{eq: trace distribution structure}.
In particular we have 
\begin{equation}
\textrm{sing spt}\left(u_{B,j}\right)\subset\mathbb{R}\setminus\left(-\sqrt{2\mu_{1}},\sqrt{2\mu_{1}}\right)\label{eq:uj smooth near 0}
\end{equation}
as with \cite[Cor. 7.3]{Savale2017-Koszul}.

The above now gives a corresponding generalization of \cite[Thm 1.3]{Savale2017-Koszul}.
Choose $f\in C_{c}^{\infty}\left(-\sqrt{2\mu_{1}},\sqrt{2\mu_{1}}\right)$.
With $0<T'<T_{0}$, let $\theta\in C_{c}^{\infty}\left(\left(-T_{0},T_{0}\right);\left[0,1\right]\right)$
such that $\theta\left(x\right)=1$ on $\left(-T',T'\right)$. Let
\begin{eqnarray*}
\mathcal{F}^{-1}\theta\left(x\right) & \coloneqq & \check{\theta}\left(x\right)=\frac{1}{2\pi}\int e^{ix\xi}\theta\left(\xi\right)d\xi\\
\mathcal{F}_{h}^{-1}\theta\left(x\right) & \coloneqq & \frac{1}{h}\check{\theta}\left(\frac{x}{h}\right)=\frac{1}{2\pi h}\int e^{\frac{i}{h}x\xi}\theta\left(\xi\right)d\xi
\end{eqnarray*}
be the given classical and semi-classical inverse Fourier transforms
respectively.
\begin{thm}
\label{thm:finer trace} There exist smooth functions $u_{B,j}\in C^{\infty}\left(-\sqrt{2\mu_{1}},\sqrt{2\mu_{1}}\right)$
such that there is a trace expansion 
\begin{align}
\textrm{tr}\left[Bf\left(\frac{D_{h}}{\sqrt{h}}\right)\left(\mathcal{F}_{h}^{-1}\theta\right)\left(\lambda\sqrt{h}-D_{h}\right)\right] & =\nonumber \\
\textrm{tr}\left[Bf\left(\frac{D_{h}}{\sqrt{h}}\right)\frac{1}{h}\check{\theta}\left(\frac{\lambda\sqrt{h}-D_{h}}{h}\right)\right] & =h^{-m-1}\left(\sum_{j=0}^{N-1}f\left(\lambda\right)u_{B,j}\left(\lambda\right)h^{j/2}+O\left(h^{N/2}\right)\right)\label{eq:Main trace expansion}
\end{align}
for each $N\in\mathbb{N}$,$\lambda\in\mathbb{R}$. 
\end{thm}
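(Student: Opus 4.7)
The plan is to use Fourier inversion to express $\tfrac{1}{h}\check\theta\bigl(\tfrac{\lambda\sqrt h-D}{h}\bigr)$ as a time-integral of wave propagators for the rescaled Dirac operator $D/\sqrt h$, apply the microlocal trace expansion (Theorem \ref{thm: microlocal trace expansion}) inside the integrand, and read off the value at $\lambda$ via Fourier inversion.

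After the change of variables $\xi=\sqrt h\,\tau$ one has
$$\tfrac{1}{h}\check\theta\bigl(\tfrac{\lambda\sqrt h-D}{h}\bigr)=\tfrac{1}{2\pi\sqrt h}\int e^{i\tau\lambda}\,\theta(\sqrt h\,\tau)\,e^{-i\tau D/\sqrt h}\,d\tau,$$
so the trace of interest equals $\tfrac{1}{2\pi\sqrt h}\int e^{i\tau\lambda}\theta(\sqrt h\,\tau)\,\textrm{tr}[Bg_\tau(D/\sqrt h)]\,d\tau$ with $g_\tau(s)\coloneqq f(s)e^{-i\tau s}$. Applying Theorem \ref{thm: microlocal trace expansion} to $Bg_\tau(D/\sqrt h)$ and integrating over $X$, each main coefficient is
$$u_{B,j}(g_\tau)=\int(fu_{B,j})(s)e^{-i\tau s}\,ds=\widehat{fu_{B,j}}(\tau).$$
Here $fu_{B,j}$ is smooth and compactly supported because $f\in C_c^\infty(-\sqrt{2\mu_1},\sqrt{2\mu_1})$ and, by \eqref{eq:uj smooth near 0}, $u_{B,j}$ is smooth on the same interval; hence $\widehat{fu_{B,j}}$ is Schwartz in $\tau$. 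Substituting back and using that $\theta\equiv 1$ near the origin, the cutoff $\theta(\sqrt h\,\tau)$ may be replaced by $1$ at cost $O(h^{\infty})$, after which Fourier inversion gives $\tfrac{1}{2\pi}\int e^{i\tau\lambda}\widehat{fu_{B,j}}(\tau)d\tau=f(\lambda)u_{B,j}(\lambda)$. Together with the prefactor $h^{-n/2}\cdot h^{-1/2}=h^{-m-1}$ (using $n=2m+1$), this yields the claimed main series and identifies the smooth functions $u_{B,j}\in C^{\infty}(\mathbb{R})$ as the restrictions to $(-\sqrt{2\mu_1},\sqrt{2\mu_1})$ of the distributions in \eqref{eq: pointwise trace distribution}.

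The main obstacle is the remainder. The bound from Theorem \ref{thm: microlocal trace expansion}, applied to the $h$-dependent symbol $g_\tau$, is controlled by $\sum_k\|\langle\xi\rangle^{2N}\hat g_\tau^{(k)}\|_{L^1}$, which grows polynomially like $(1+|\tau|)^{2N}$, while the outer $\tau$-integral runs over $|\tau|\leq T_0/\sqrt h$. A direct bound therefore loses powers of $h$. The remedy is to exploit the oscillatory factor $e^{i\tau\lambda}$: repeated integration by parts in $\tau$, combined with the Schwartz decay of $\widehat{fu_{B,j}}$ and the analogous $\tau$-regularity of the error (inherited from the resolvent expansion \eqref{eq: rescaled resolvent expansion} via the Helffer--Sj\"ostrand formula), converts the $|\tau|^{2N}$ growth into decay and yields the remainder $O(h^{N/2-m-1})$ claimed in the theorem.
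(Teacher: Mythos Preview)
Your handling of the main terms is correct and coincides with the paper's: writing the trace as a $\tau$-integral, applying Theorem~\ref{thm: microlocal trace expansion} with $\phi=g_\tau$, and recognizing $u_{B,j}(g_\tau)=\widehat{fu_{B,j}}(\tau)$ as Schwartz (by \eqref{eq:uj smooth near 0}) so that Fourier inversion returns $f(\lambda)u_{B,j}(\lambda)$ is exactly what the paper does.

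The gap is the remainder, and your proposed fix does not work. Integration by parts in $\tau$ against $e^{i\tau\lambda}$ produces decay in $\lambda$, not in $\tau$; at $\lambda=0$ (precisely the value used later for the eta invariant) there is no oscillation whatsoever. Nor does the ``$\tau$-regularity of the error'' help: differentiating the remainder in $\tau$ amounts to replacing $f(s)$ by $(-is)^kf(s)$ in Theorem~\ref{thm: microlocal trace expansion}, and the bound one gets back is again $h^{(N+1-n)/2}O(\langle\tau\rangle^{2N})$ with no improvement in the $\tau$-growth. Since the $\tau$-integral runs over $|\tau|\lesssim h^{-1/2}$, the naive estimate loses $h^{-N}$ and destroys the expansion; integration by parts cannot recover this.

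The paper closes this gap by an additional decomposition you are missing. One writes $\theta=\theta_\epsilon+\vartheta$ with $\theta_\epsilon(x)\coloneqq\theta(x/h^\epsilon)$ for some $\epsilon\in(\tfrac14,\tfrac12)$. The contribution of $\vartheta$, which is supported on $[T'h^\epsilon,T_0]$, is shown to be $O(h^\infty)$ by a separate nontrivial estimate \cite[Lem.~3.1]{Savale2017-Koszul}; this is where the hypothesis $\textrm{spt}(\theta)\subset(-T_0,T_0)$ with $T_0$ below the shortest Reeb period actually enters, and it is not a consequence of Theorem~\ref{thm: microlocal trace expansion} alone. For $\theta_\epsilon$ the $\tau$-integral now ranges only over $|\tau|\lesssim h^{\epsilon-1/2}$, so the remainder $h^{(N+1-n)/2}\langle\tau\rangle^{2N}$ integrates to $O\bigl(h^{2N(\epsilon-1/4)-m-1}\bigr)$, which for $\epsilon>\tfrac14$ is driven to any prescribed order by choosing $N$ large.
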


\begin{proof}
We break up the trace using 
\[
\theta\left(x\right)=\theta_{\epsilon}\left(x\right)+\underbrace{\left[\theta\left(x\right)-\theta_{\epsilon}\left(x\right)\right]}_{\vartheta\left(x\right)}
\]
 where $\theta_{\epsilon}\left(x\right)\coloneqq\theta\left(\frac{x}{h^{\epsilon}}\right)$
, $\epsilon\in\left(\frac{1}{4},\frac{1}{2}\right)$. The second function
in the break up satisfies $\vartheta\in C_{c}^{\infty}\left(\left(T'h^{\epsilon},T\right);\left[-1,1\right]\right)$
and one has 
\begin{equation}
\textrm{tr}\left[Bf\left(\frac{D_{h}}{\sqrt{h}}\right)\left(\mathcal{F}_{h}^{-1}\vartheta\right)\left(\lambda\sqrt{h}-D_{h}\right)\right]=O\left(h^{\infty}\right).\label{eq:main estimate from Koszul paper}
\end{equation}
The proof of the above is the same as \cite[Lem 3.1]{Savale2017-Koszul}
which already uses a microlocal partition of the trace cf. \cite[Eq. 3.1 and 3.2]{Savale2017-Koszul}.

Next we come to the trace involving $\theta_{\epsilon}\left(x\right)$
and write 
\begin{align}
\textrm{tr}\left[Bf\left(\frac{D_{h}}{\sqrt{h}}\right)\left(\mathcal{F}_{h}^{-1}\theta_{\epsilon}\right)\left(\lambda\sqrt{h}-D_{h}\right)\right] & =\textrm{tr}\left[Bf\left(\frac{D_{h}}{\sqrt{h}}\right)\frac{1}{h^{1-\epsilon}}\check{\theta}\left(\frac{\lambda\sqrt{h}-D_{h}}{h^{1-\epsilon}}\right)\right]\nonumber \\
 & =\frac{h^{-\frac{1}{2}}}{2\pi}\int\,dt\textrm{ tr}\left[Bf\left(\frac{D_{h}}{\sqrt{h}}\right)e^{it\left(\lambda-\frac{D_{h}}{\sqrt{h}}\right)}\right]\theta\left(th^{\frac{1}{2}-\epsilon}\right).\label{eq:local trace in terms of wave kernel}
\end{align}
Next, the expansion \prettyref{thm: microlocal trace expansion},
with $\phi\left(x\right)=f\left(x\right)e^{it\left(\lambda-x\right)}$,
combined with the smoothness of $u_{j}$ on $\textrm{spt}\left(f\right)\subset\left(-\sqrt{2\mu_{1}},\sqrt{2\mu_{1}}\right)$
\prettyref{eq:uj smooth near 0} gives 
\begin{eqnarray}
\textrm{tr}\left[Bf\left(\frac{D_{h}}{\sqrt{h}}\right)e^{it\left(\lambda-\frac{D_{h}}{\sqrt{h}}\right)}\right] & = & e^{it\lambda}h^{-n/2}\left(\sum_{j=0}^{N}h^{j/2}\widehat{fu_{B,j}}\left(t\right)\right)\nonumber \\
 &  & +h^{\left(N+1-n\right)/2}\underbrace{O\left(\sum_{k=0}^{n+1}\left\Vert \left\langle \xi\right\rangle ^{2N}\hat{\phi}^{\left(k\right)}\left(\xi-t\right)\right\Vert _{L^{1}}\right)}_{=O\left(\left\langle t\right\rangle ^{2N}\right)}.\label{eq:trace expansion to substitute}
\end{eqnarray}
Finally, plugging \prettyref{eq:trace expansion to substitute} into
\prettyref{eq:local trace in terms of wave kernel} and using $\theta\left(th^{\frac{1}{2}-\epsilon}\right)=1+O\left(h^{\infty}\right)$
gives 
\begin{eqnarray}
 &  & \frac{h^{-\frac{1}{2}}}{2\pi}\int\,dt\textrm{tr}\left[Bf\left(\frac{D_{h}}{\sqrt{h}}\right)e^{it\left(\lambda-\frac{D_{h}}{\sqrt{h}}\right)}\right]\theta\left(th^{\frac{1}{2}-\epsilon}\right)\nonumber \\
 & = & h^{-m-1}\left(\sum_{j=0}^{N}h^{j/2}f\left(\lambda\right)u_{B,j}\left(\lambda\right)\right)+O\left(h^{2N\left(\epsilon-\frac{1}{4}\right)-m-1}\right)\label{eq: final line finer exp}
\end{eqnarray}
via Fourier inversion as required.
\end{proof}

\subsection{Estimates in $\Psi_{\delta}^{0}$}

In the next section we shall also need estimates on the microlocal
trace \prettyref{eq: microlocal trace expansion} in the more general
class from \prettyref{subsec:The-class-,}. These follow from arguments
similar to the ones in the proofs of \prettyref{thm: microlocal trace expansion}
and \prettyref{thm:finer trace}. Firstly, note that the formula \prettyref{eq:leading coefficient}
shows that the microlocal Weyl measure of $\frac{D_{h}}{\sqrt{h}}$
concentrates on $\Sigma$ (cf. \cite[Thm 4.1]{Colin-de-Verdiere-Hillairet-TrelatI},
\cite[Thm. 25]{Savale-QC}). 

More generally, denote by $\pi_{\Sigma}:T^{*}X\rightarrow\Sigma,\;\pi_{\Sigma}\left(x,\xi\right)=\left(x,-a\left(x\right)\right)$
the projection onto the characteristic variety. Then for $B\in\Psi_{\delta}^{0}\left(X\right)$
the equations \prettyref{eq: rescaling Schw kernel}, \prettyref{eq: Diagonal kernel expansion}
and \prettyref{eq:last line in demo} imply 
\begin{align}
\textrm{tr }\left[Bf\left(\frac{D_{h}}{\sqrt{h}}\right)\right] & \leq C_{1}h^{-n/2}\mu\left(\pi_{\Sigma}\left(WF\left(B\right)\right)\right)+C_{2}h^{-n/2+\left(1/2-\delta\right)}\label{eq: Sdelta microlocal trace estimate}
\end{align}
with $\mu$ denoting the pullback Riemannian measure on $\Sigma$.
Above $C_{1}$, $C_{2}$ depend on appropriate $S_{\delta}^{0}$ semi-norms
\prettyref{eq: delta pseudodifferential estimates} of the Weyl symbol
of $B$.

For the estimate generalizing \prettyref{thm:finer trace}, the equations
corresponding to \prettyref{eq:main estimate from Koszul paper},
\prettyref{eq:local trace in terms of wave kernel} and \prettyref{eq: final line finer exp}
give
\begin{align}
\textrm{tr}\left[Bf\left(\frac{D_{h}}{\sqrt{h}}\right)\left(\mathcal{F}_{h}^{-1}\theta\right)\left(\lambda\sqrt{h}-D_{h}\right)\right] & \leq C_{1}h^{-m-1}\mu\left(\pi_{\Sigma}\left(WF\left(B\right)\right)\right)+C_{2}h^{-m-1+\left(1/2-\delta\right)},\label{eq:Sdelta microlocal trace estimate II}
\end{align}
with again $C_{1}$, $C_{2}$ depending on appropriate $S_{\delta}^{0}$
semi-norms \prettyref{eq: delta pseudodifferential estimates} of
the Weyl symbol of $B$.

\section{\label{sec:Eta-remainder-asymptotics}Eta remainder asymptotics}

In this section we shall prove the main \prettyref{thm: eta semiclassical limit}.

\subsection{\label{subsec:Partitions-adapted-to}Partitions adapted to recurrence}

We shall first choose a microlocal partition of unity adapted to the
recurrence sets $S_{T,\varepsilon}$ and $S_{T,\varepsilon}^{e}$
\prettyref{eq:recurrence set}. We recall that $\varepsilon=h^{\delta}$
for $\delta\in\left[0,\frac{1}{2}\right)$. With $\chi\in C_{c}^{\infty}\left(-1,1\right)$,
satisfying $\chi=1$ on $\left[-\frac{1}{2},\frac{1}{2}\right]$,
we choose a pseudodifferential operator $B=b^{W}\in\Psi_{\delta}^{0}\left(X\right)$
of the form
\begin{align*}
b & =\chi\left(\frac{\left|\xi+a\right|}{h^{\delta}}\right)b_{0}\left(x\right)\quad\textrm{ with }\\
b_{0} & =\begin{cases}
1 & \textrm{on }S_{T,\varepsilon}\\
0 & \textrm{on }\left(S_{T,\varepsilon}^{e}\right)^{c}
\end{cases}.
\end{align*}
The existence of $b_{0}$ satisfying the correct symbolic estimates
follows by an application of the Whitney extension theorem \cite[Sec. 2.3]{HormanderI}.
In particular this gives
\begin{equation}
\mu\left(\pi_{\Sigma}\left(WF\left(B\right)\right)\right)\leq\mu^{g^{TX}}\left(S_{T,\varepsilon}^{e}\right).\label{eq: choice of B near recurrence set}
\end{equation}

Next following \cite[Lem. 3.3]{Savale-Gutzwiller}, we note that near
each $x\in X\setminus S_{T,\varepsilon}^{e}$ there is a local Darboux
chart $\varphi_{x}:N_{x}\xrightarrow{\sim}C_{\varepsilon_{0}h^{\delta},T}\subset\mathbb{R}^{n}$
into a cylinder $C_{\varepsilon_{0}h^{\delta},T}\coloneqq B_{\mathbb{R}^{2m}}\left(\varepsilon_{0}h^{\delta}\right)\times\left(0,T\right)_{x_{0}}\subset\mathbb{R}_{x}^{n}$
of radius $\varepsilon_{0}h^{\delta}$ and length $T$ in Euclidean
space. For each such Darboux chart $\varphi_{x}:N_{x}\xrightarrow{\sim}C_{\varepsilon h^{\delta},T}\subset\mathbb{R}^{n}$
we set $N_{x}^{0}\coloneqq\varphi_{x}^{-1}\left(C_{\frac{\varepsilon h^{\delta}}{8},\frac{T}{8}}\right)$.
By compactness, we may choose a finite set of these such that $\bigcup_{u=1}^{N}N_{x_{u}}^{0}$,
$N=N_{h}=O\left(h^{-\delta}\right)$, cover $X\setminus S_{T,\varepsilon}^{e}$.
Denote by $\tilde{S}\subset T^{*}X$ the inverse image of any subset
$S\subset X$ under the projection $\pi:T^{*}X\rightarrow X$ and
by $c_{\delta}\coloneqq ch^{\delta}$ the $h$-dependent constant
for each $h$-independent constant $c$.

For $\delta\in\left[0,\frac{1}{2}\right),$ $\tau>0$, a $\left(\Omega,\tau,\delta\right)$-microlocal
partition of unity is defined to be a collection of zeroth-order self
-adjoint pseudo-differential operators 
\[
\mathcal{P}=\left\{ A_{u}\in\Psi_{\delta}^{0}\left(X\right)|0\leq u\leq N_{h}\right\} \cup\left\{ B\in\Psi_{\delta}^{0}\left(X\right)\right\} 
\]
satisfying
\begin{eqnarray}
\sum_{u=0}^{N_{h}}A_{u}+B & = & 1\nonumber \\
N_{h} & = & O\left(h^{-\delta}\right)\nonumber \\
WF\left(A_{0}\right)\subset & U_{0}\subset & \overline{T^{*}X}\setminus\Sigma_{\left[-\frac{\tau_{\delta}}{64},\frac{\tau_{\delta}}{64}\right]}^{D_{h}}\nonumber \\
WF\left(A_{u}\right)\Subset & U_{u}\subset & \Sigma_{\left[-\tau_{\delta},\tau_{\delta}\right]}^{D_{h}}\cap\tilde{N}_{x_{u}}^{0},\;1\leq u\leq N\nonumber \\
WF\left(B\right)\Subset & V\subset & \Sigma_{\left[-\tau_{\delta},\tau_{\delta}\right]}^{D_{h}}\cap\tilde{S}_{T,\varepsilon}^{e},\label{eq: microlocal partition of unity-1-1-1}
\end{eqnarray}
for some open cover $\left\{ U_{u}\right\} _{u=0}^{N}\cup V$ of $T^{*}X$
. For such a partition $\mathcal{P}$ define the pairs of indices
\begin{align}
I_{\mathcal{P}} & =\left\{ \left(u,u'\right)|u\leq u',\:WF\left(A_{u}\right)\cap WF\left(A_{u'}\right)\neq\emptyset\right\} \nonumber \\
J_{\mathcal{P}} & =\left\{ u|WF\left(A_{u}\right)\cap WF\left(B\right)\neq\emptyset\right\} .\label{eq: index sets}
\end{align}
 An augmentation $\left(\mathcal{P};\mathcal{V},\mathcal{W}\right)$
of this partition consists of an additional collection of open sets
\begin{align*}
\mathcal{V} & =\left\{ V_{uu'}^{1}\right\} _{\left(u,u'\right)\in I_{\mathcal{P}}}\cup\left\{ V_{u}^{2}\right\} _{u\in J_{\mathcal{P}}}\\
\mathcal{W} & =\left\{ W_{uu'}^{1}\right\} _{\left(u,u'\right)\in I_{\mathcal{P}}}\cup\left\{ W_{u}^{2}\right\} _{u\in J_{\mathcal{P}}}
\end{align*}
satisfying 
\begin{eqnarray}
WF\left(A_{u}\right)\cap WF\left(A_{u'}\right) & \subset & W_{uu'}^{1}\nonumber \\
 &  & \cap\nonumber \\
WF\left(A_{u}\right)\cup WF\left(A_{u'}\right) & \subset & V_{uu'}^{1}\Subset\Sigma_{\left[-2\tau_{\delta},2\tau_{\delta}\right]}^{D_{h}}\cap\tilde{N}_{x_{u}},\nonumber \\
WF\left(A_{u}\right)\cap WF\left(B\right) & \subset & W_{u}^{2}\nonumber \\
 &  & \cap\nonumber \\
WF\left(A_{u}\right)\cup WF\left(B\right) & \subset & V_{u}^{2}\Subset\Sigma_{\left[-2\tau_{\delta},2\tau_{\delta}\right]}^{D_{h}}\cap\tilde{N}_{x_{u}}.\label{eq:augmentation}
\end{eqnarray}
Next with $d=\sigma\left(D_{h}\right)$, for each pair of indices
in \prettyref{eq: index sets} we set 
\begin{align}
T_{uu'} & \coloneqq\frac{1}{\inf_{\left(g,\mathtt{v}\right)\in\mathcal{G}_{uu'}\times S_{\delta}^{0}\left(X;U\left(S\right)\right)}\left|H_{g,\mathtt{v}}d\right|},\label{eq: exit time 1}\\
S_{u} & \coloneqq\frac{1}{\inf_{\left(g,\mathtt{v}\right)\in\mathcal{H}_{u}\times S_{\delta}^{0}\left(X;U\left(S\right)\right)}\left|H_{g,\mathtt{v}}d\right|},\quad\textrm{ with}\label{eq: exit time 2}\\
\mathcal{G}_{uu'} & \coloneqq\left\{ g\in S_{\delta'}^{0}\left(T^{*}X;\left[0,1\right]\right)|\left.g\right|_{W_{uu'}^{1}}=1,\;\left.g\right|_{\left(V_{uu'}^{1}\right)^{c}}=0\right\} \label{eq: set of exit functions 1}\\
\mathcal{H}_{u} & \coloneqq\left\{ g\in S_{\delta'}^{0}\left(T^{*}X;\left[0,1\right]\right)|\left.g\right|_{W_{u}^{2}}=1,\;\left.g\right|_{\left(V_{u}^{2}\right)^{c}}=0\right\} \label{eq: set of exit functions 2}
\end{align}
and $\left|H_{g,\mathtt{v}}d\right|\coloneqq\sup\left\Vert \left\{ \mathtt{v}^{*}d\mathtt{v},g\right\} \right\Vert $
with the bracket being computed in terms of the chosen and induced
trivialization/coordinates on $N_{x_{u}},\tilde{N}_{x_{u}}$. A function
in $\mathcal{G}_{uu'}$ or $\mathcal{H}_{u}$ shall be referred to
as a trapping/microlocal weight function. Finally, the \textit{extension/trapping
time} of an augmented $\left(\Omega,\tau,\delta\right)$-partition
$\left(\mathcal{P};\mathcal{V},\mathcal{W}\right)$ is set to be 
\begin{equation}
T_{\left(\mathcal{P};\mathcal{V},\mathcal{W}\right)}\coloneqq\min\left\{ \min\left\{ T_{uu'}\right\} _{\left(u,u'\right)\in I_{\mathcal{P}}},\min\left\{ S_{u}\right\} _{u\in J_{\mathcal{P}}}\right\} .\label{eq: total extension time}
\end{equation}
 For each $\theta\in C_{c}^{\infty}\left(\mathbb{R}\right)$, $f\in\mathcal{S}\left(\mathbb{R}\right)$
and $A,B\in\Psi_{\delta}^{0}\left(X\right)$, we set 
\[
\mathcal{T}_{A,B}^{\theta}\left(D_{h}\right)\coloneqq\frac{1}{\pi}\int_{\mathbb{C}}\bar{\partial}\tilde{f}\left(z\right)\check{\theta}\left(\frac{\lambda-z}{\sqrt{h}}\right)\textrm{tr }\left[A\left(\frac{1}{\sqrt{h}}D_{h}-z\right)^{-1}B\right]dzd\bar{z}
\]
with $\tilde{f}$ being an almost analytic continuation of $f$. 

We now have the following. 
\begin{lem}
\label{lem: Lemma req Ehrenfest time} For each $\varepsilon=ch^{\delta}$,
$\delta\in\left[0,\frac{1}{2}\right)$, $c,\ell>0$, $\tau$ sufficiently
small and $T\leq\left(\frac{1}{2}-\delta\right)T_{E}^{\ell}\left(h\right)$,
there exists an augmented $\left(\Omega,\tau,\delta\right)$-partition
of unity $\left(\mathcal{P};\mathcal{V},\mathcal{W}\right)$ such
that 
\begin{equation}
T_{\left(\mathcal{P};\mathcal{V},\mathcal{W}\right)}>T.\label{eq: extension time arb large-1-1}
\end{equation}
Given $\theta\in C_{c}^{\infty}\left(\left(T_{0},T\right);\left[-1,1\right]\right)$
one further has

\begin{equation}
\mathcal{T}_{A_{u},A_{v}}^{\theta}\left(D_{h}\right),\:\mathcal{T}_{A_{u},B}^{\theta}\left(D_{h}\right),\textrm{ }\mathcal{T}_{B,A_{u}}^{\theta}\left(D_{h}\right)=O\left(h^{\infty}\right).\label{eq:Egorov type conclusion}
\end{equation}
\end{lem}

\begin{proof}
The proof of the first equation \prettyref{eq: extension time arb large-1-1}
is similar to \cite[Prop. 3.4]{Savale-Gutzwiller} with one caveat.
The construction of the microlocal weight functions $g_{u}$ \prettyref{eq: set of exit functions 1},
\prettyref{eq: set of exit functions 2} therein on \cite[pg. 1442]{Savale-Gutzwiller}
is done with respect to long Darboux charts of length $T$. These
charts, for points outside the recurrence set, are constructed using
the Reeb flow upto time $T$. The microlocal weight function $g_{u}$
further needs to lie in the symbol space $S_{\delta'}^{0}$, for some
$\delta'\in\left[0,\frac{1}{2}\right)$. Hence in passing from $S_{\delta'}^{0}$
symbolic estimates for the microlocal weight functions $g_{u}$, from
the flow dependent Darboux chart to $S_{\delta'}^{0}$ symbolic estimates
in a flow independent atlas one needs to account for the Jacobian
$de^{tR}$, $t\leq T$,  of the flow. In particular one needs the
estimates $\left|\partial_{x}^{\alpha}e^{tR}\right|\leq Ch^{-\left|\alpha\right|\left(\delta'-\delta\right)}$,
$t\leq T$, for some $\delta'\in\left[0,\frac{1}{2}\right)$. These
are seen to be valid for $\delta'=\frac{1}{2}-\left(\frac{1}{2}-\delta\right)\left(\frac{\ell}{\Lambda_{\textrm{max}}+\ell}\right)$,
$\ell>0$, and $T\leq\left(\frac{1}{2}-\delta\right)T_{E}^{\ell}\left(h\right)$
below a multiple of the Ehrenfest time.

Following the above, the proof of the second part \ref{eq:Egorov type conclusion}
is similar to that of \cite[Lemma 3.6]{Savale-Gutzwiller} or \cite[Lemma 3.1]{Savale2017-Koszul}.
\end{proof}

Next, we turn to examining the trace $\mathcal{T}_{B,B}^{\theta}\left(D_{h}\right)$.
We shall choose $\theta^{1}\in C_{c}^{\infty}\left(\left(-T_{0},T_{0}\right)\right)$
such that $\theta^{1}\left(x\right)=1$ on $\left(-T',T'\right)$,
$T'<T_{0}$, and $\check{\theta^{1}}\left(\xi\right)\geq0$. Further
we let $f\left(x\right)\geq0$ with $f\left(0\right)=1$. Since the
trace and trace norm of a positive self-adjoint operator agree, we
have
\begin{align}
 & \left\Vert Bf\left(\frac{D_{h}}{\sqrt{h}}\right)\left(\mathcal{F}_{h}^{-1}\theta^{1}\right)\left(\lambda\sqrt{h}-D_{h}\right)B\right\Vert _{\textrm{tr}}\nonumber \\
= & \textrm{tr }\left[B^{2}f\left(\frac{D_{h}}{\sqrt{h}}\right)\left(\mathcal{F}_{h}^{-1}\theta^{1}\right)\left(\lambda\sqrt{h}-D_{h}\right)\right]\nonumber \\
\leq & C_{1}h^{-m-1}\mu^{g^{TX}}\left(S_{T,\varepsilon}^{e}\right)+C_{2}h^{-m-1+\left(1/2-\delta\right)}\label{eq:trace vs trace norm}
\end{align}
for $h$-independent constants $C_{1}$, $C_{2}$ using \prettyref{eq:Sdelta microlocal trace estimate II}
and \prettyref{eq: choice of B near recurrence set}. For $\theta_{c}^{1}\left(t\right)\coloneqq\theta^{1}\left(t-c\right)$
one has $\mathcal{F}_{h}^{-1}\theta_{c}^{1}\left(x\right)=e^{i\frac{xc}{h}}\mathcal{F}_{h}^{-1}\theta^{1}\left(x\right)$.
Hence, $e^{ic\left(\lambda\sqrt{h}-D_{h}\right)}$ being a unitary
operator, the left hand side of \prettyref{eq:trace vs trace norm}
is unchanged under translation of $\theta^{1}$. By writing an arbitrary
$\theta\in C_{c}^{\infty}\left(T_{0},T\right)$, of possibly $h$-dependent
compact support $T=T\left(h\right)$, as a sum of translates of functions
with compact support in $\left(-T_{0},T_{0}\right)$ we obtain 
\begin{align*}
 & \left\Vert Bf\left(\frac{D_{h}}{\sqrt{h}}\right)\left(\mathcal{F}_{h}^{-1}\theta\right)\left(\lambda\sqrt{h}-D_{h}\right)B\right\Vert _{\textrm{tr}}\\
\leq & T\left[C_{1}h^{-m-1}\mu^{g^{TX}}\left(S_{T,\varepsilon}^{e}\right)+C_{2}h^{-m-1+\left(1/2-\delta\right)}\right].
\end{align*}
Combined with \prettyref{eq:Egorov type conclusion} and \cite[Thm 1.3]{Savale2017-Koszul}
we have 
\begin{align}
 & \left|\textrm{tr }\left[f\left(\frac{D_{h}}{\sqrt{h}}\right)\left(\mathcal{F}_{h}^{-1}\theta\right)\left(\lambda\sqrt{h}-D_{h}\right)\right]-h^{-m-1}f\left(\lambda\right)u_{0}\left(\lambda\right)\right|\nonumber \\
\leq & T\left[C_{1}h^{-m-1}\mu^{g^{TX}}\left(S_{T,\varepsilon}^{e}\right)+C_{2}h^{-m-1+\left(1/2-\delta\right)}\right]\label{eq:leading part trace exp}
\end{align}
 for arbitrary $\theta\in C_{c}^{\infty}\left(-T,T\right)$, of possibly
$h$-dependent compact support $T=T\left(h\right)<\left(\frac{1}{2}-\delta\right)T_{E}^{\ell}\left(h\right)$.

\subsection{\label{subsec:Tauberian-argument}Tauberian argument}

 Next we derive asymptotics for irregular functional traces of $\frac{D_{h}}{\sqrt{h}}$
via a Tauberian argument. The arguments below are modified from semiclassical
Tauberian arguments as in \cite[Ch. 11]{Dimassi-Sjostrand}. They
should be also compared with classical Tauberian arguments as in \cite[Appx. B]{Safarov-Vassiliev-1997}.

First note from \cite[Cor. 7.3]{Savale2017-Koszul} that the distributions
$u_{j}\in\mathcal{S}'\left(\mathbb{R}\right)$ \prettyref{eq: microlocal trace expansion}
are smooth near $0$. Hence 
\begin{equation}
u_{j}^{\pm}\left(x\right)\coloneqq1_{\left[0,\infty\right)}\left(\pm x\right)u_{j}\left(x\right)\in\mathcal{S}'\left(\mathbb{R}\right)\label{eq:cutoff functions}
\end{equation}
are well defined tempered distributions and we similarly define $f^{\pm}$
for any $f\in\mathcal{S}\left(\mathbb{R}\right)$. We then have the
following.
\begin{lem}
For any $f\in\mathcal{S}\left(\mathbb{R}\right)$, $\varepsilon=ch^{\delta}$,
$\delta\in\left[0,\frac{1}{2}\right)$, $c,\ell>0$ and $T\leq\left(\frac{1}{2}-\delta\right)T_{E}^{\ell}\left(h\right)$,
one has
\begin{align}
 & \left|\textrm{tr }f^{\pm}\left(\frac{D_{h}}{\sqrt{h}}\right)-\left[h^{-m-\frac{1}{2}}u_{0}^{\pm}\left(f\right)+h^{-m}u_{1}^{\pm}\left(f\right)\right]\right|\nonumber \\
\leq & h^{-m}\left\Vert f\right\Vert _{C^{0}}\left[u_{0}\left(0\right)T^{-1}+O\left(T^{-2}+\mu\left(S_{T,\varepsilon}^{e}\right)\right)\right].\label{eq:irreg funct tr exp}
\end{align}
\end{lem}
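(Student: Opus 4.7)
The statement is a semiclassical Tauberian corollary of the finer trace expansion \eqref{eq:leading part trace exp}, and the plan is to proceed by standard Tauberian smoothing at scale $\eta=\sqrt{h}/T$. This is the right scale for the target error, since by the leading Weyl estimate there should be of order $h^{-m-\frac12}\cdot\eta\cdot u_{0}(0)=h^{-m}u_{0}(0)/T$ rescaled eigenvalues $\mu_{j}:=\lambda_{j}/\sqrt{h}$ in a window of radius $\eta$ about the origin, which matches the announced leading-order error.

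Concretely, I would fix an even $\theta\in C_{c}^{\infty}((-T,T);[0,1])$ with $\theta(0)=1$ and $\check\theta\geq 0$ (constructed for instance as the autocorrelation of a scaled bump supported in $(-T/2,T/2)$), and set $\phi_{h}(x):=h^{-1/2}\check\theta(x/\sqrt{h})$, a non-negative delta-approximant of unit mass at scale $\sqrt{h}/T$. Writing $H^{+}:=\mathbf{1}_{[0,\infty)}$, the decomposition
\[
\mathrm{tr}\,f^{+}(D/\sqrt{h})\;=\;\underbrace{\sum_{j}f(\mu_{j})(H^{+}\!*\phi_{h})(\mu_{j})}_{I_{1}}\;+\;\underbrace{\sum_{j}f(\mu_{j})\bigl[H^{+}-H^{+}\!*\phi_{h}\bigr](\mu_{j})}_{I_{2}}
\]
splits the target into a main term and a boundary correction. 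For $I_{1}$, Fubini combined with the identity $\phi_{h}(\lambda-D/\sqrt{h})=\sqrt{h}\,(\mathcal{F}_{h}^{-1}\theta)(\lambda\sqrt{h}-D)$ turns $I_{1}$ into $\sqrt{h}$ times an integral in $\lambda\in[0,\infty)$ of the trace appearing in \eqref{eq:leading part trace exp}, augmented by its next $h^{1/2}$ coefficient from \eqref{eq:Main trace expansion} (valid since $u_{0},u_{1}\in C^{\infty}$ on $(-\sqrt{2\mu_{1}},\sqrt{2\mu_{1}})\supset\textrm{spt}(f)$ by \eqref{eq:uj smooth near 0}). Integration in $\lambda$ then produces the main terms $h^{-m-1/2}u_{0}^{+}(f)+h^{-m}u_{1}^{+}(f)$ up to an $O(h^{-m}\|f\|_{C^{0}}\mu(S_{T,\varepsilon}^{e}))$ recurrence error.

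For $I_{2}$ I would use that $H^{+}-H^{+}\!*\phi_{h}$ is pointwise bounded by $1$ and (modulo rapidly decaying Schwartz tails) supported in $(-C\sqrt{h}/T,C\sqrt{h}/T)$, giving $|I_{2}|\leq\|f\|_{C^{0}}\cdot\#\{j:|\mu_{j}|\leq C\sqrt{h}/T\}+O(h^{\infty})$. The count is bounded by a second application of \eqref{eq:leading part trace exp}, this time with $f$ replaced by a fixed non-negative bump $f_{0}\geq\mathbf{1}_{[-1,1]}$, integrated in $\lambda$ over the symmetric window $[-C\sqrt{h}/T,C\sqrt{h}/T]$: the leading piece $\int h^{-m-1/2}u_{0}(\lambda)\,d\lambda$ across this window equals $2Cu_{0}(0)h^{-m}/T+O(h^{-m}/T^{2})$ by Taylor expansion of $u_{0}$ around $0$, while the recurrence error integrated over a window of length $\sqrt{h}/T$ contributes only $O(h^{-m}\mu(S_{T,\varepsilon}^{e}))$. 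Combining the $I_{1}$ and $I_{2}$ estimates yields \eqref{eq:irreg funct tr exp}; the case $f^{-}$ follows by symmetry.

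The main obstacle will be the careful bookkeeping of the factor of $T$ that appears in the error $Th^{-m-1}f(\lambda)\mu(S_{T,\varepsilon}^{e})$ of \eqref{eq:leading part trace exp}: in both the main-term and the count estimates this factor must cancel against a $\sqrt{h}/T$-length integration window (respectively a $\sqrt{h}$-rescaling), so that the net recurrence contribution to \eqref{eq:irreg funct tr exp} is of size $h^{-m}\mu(S_{T,\varepsilon}^{e})$ rather than the naïve $Th^{-m-1/2}\mu(S_{T,\varepsilon}^{e})$. This cancellation is the technical heart of the argument, and is made possible precisely by the pointwise-in-$\lambda$ (as opposed to $L^{1}$-integrated) character of \eqref{eq:leading part trace exp}, together with the smoothness of $u_{0},u_{1}$ near the origin from \eqref{eq:uj smooth near 0}.
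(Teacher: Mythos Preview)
There is a genuine gap in your $I_{1}$ estimate. With $\theta\in C_{c}^{\infty}((-T,T))$ you have
\[
I_{1}=\sqrt{h}\int_{0}^{\infty}\mathrm{tr}\!\left[f\!\left(\tfrac{D}{\sqrt{h}}\right)(\mathcal{F}_{h}^{-1}\theta)(\lambda\sqrt{h}-D)\right]d\lambda,
\]
and the only control you have on the integrand is \eqref{eq:leading part trace exp}, whose recurrence error is $Th^{-m-1}f(\lambda)\mu(S_{T,\varepsilon}^{e})$ \emph{pointwise in} $\lambda$. Integrating this over $[0,\infty)$ against the Schwartz weight $f$ and multiplying by $\sqrt{h}$ gives exactly the ``na\"ive'' $Th^{-m-1/2}\|f\|_{L^{1}}\mu(S_{T,\varepsilon}^{e})$ that you yourself flag; the ``$\sqrt{h}$-rescaling'' you invoke is already the prefactor in $I_{1}=\sqrt{h}\int\ldots$ and does not produce any further cancellation. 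The point is that the long-time ($|t|>T_{0}$) piece of the trace contributes at \emph{every} $\lambda\in\mathrm{spt}(f)$, and the $\lambda$-integration has length $O(1)$, not $O(\sqrt{h}/T)$. So your single-scale smoothing cannot yield the target $h^{-m}\mu(S_{T,\varepsilon}^{e})$ in $I_{1}$.

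The paper's remedy is a two-scale smoothing. One first convolves $\mathfrak{M}_{f}$ with a \emph{short-time} mollifier (time support in $[-1,1]$); by the short-time expansion alone this matches $h^{-m-1/2}fu_{0}+h^{-m}fu_{1}$ pointwise in $\lambda$ up to $O(h^{-m+1/2})$, with \emph{no recurrence term}. Integrating over $(-\infty,0]$ produces the main terms, and the discrepancy with $\mathrm{tr}\,f^{-}(D/\sqrt{h})$ is $\int\phi(\lambda'/\sqrt{h})\,d\mathfrak{M}_{f}(\lambda')$ for a rapidly decaying odd function $\phi$. Only at this stage is the long time $T$ introduced: one compares $\phi$ with $\phi\ast\check{\theta}_{T}$ via your local-Weyl count (applied on intervals of length $\sqrt{h}/T$ and summed against the rapid decay of $\phi$), and evaluates the smoothed remainder $\phi\ast\check{\theta}_{T}$ using \eqref{eq:leading part trace exp}. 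Because $\phi$ is concentrated at scale $1$ in the rescaled variable $\lambda'/\sqrt{h}$, the recurrence error is now integrated over a region of effective $\lambda'$-width $O(\sqrt{h})$ rather than $O(1)$, and the oddness $\int\phi=0$ kills the would-be leading contribution; this yields the correct $h^{-m}\mu(S_{T,\varepsilon}^{e})$. In short: extract the main terms with short time first, and invoke the long-time expansion only on the localized boundary correction.
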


\begin{proof}
First choose $\theta\in\mathcal{S}\left(\mathbb{R}\right)$ such
that $\check{\theta}\geq\frac{1}{1+\epsilon}$, $\epsilon>0$, on
$\left[0,1\right]$ and $1=\theta\left(0\right)=\int d\xi\check{\theta}\left(\xi\right)$.
Set $\theta_{T}\left(x\right)=\theta\left(T^{-1}x\right)$ and let
$N\left(a,b\right)$ denote the number of eigenvalues of $\frac{D_{h}}{\sqrt{h}}$
in the interval $\left(a,b\right)$. Choosing $f\left(x\right)\geq0$,
the trace expansion \prettyref{eq:leading part trace exp} with $\lambda=0$
now gives 
\begin{align*}
\frac{T}{\left(1+\epsilon\right)h}N\left(0,T^{-1}\sqrt{h}\right)\leq & \textrm{tr}\left[f\left(\frac{D_{h}}{\sqrt{h}}\right)\frac{T}{h}\check{\theta}\left(\frac{-TD_{h}}{h}\right)\right]\\
= & h^{-m-1}\left[f\left(0\right)u_{0}\left(0\right)+O\left(T\left[\mu\left(S_{T,\varepsilon}^{e}\right)+h^{1/2-\delta}\right]\right)\right]
\end{align*}
$\forall\epsilon>0$. And hence 
\begin{equation}
N\left(0,T^{-1}\sqrt{h}\right)\leq h^{-m}\left[T^{-1}f\left(0\right)u_{0}\left(0\right)+O\left(\mu\left(S_{T,\varepsilon}^{e}\right)+h^{1/2-\delta}\right)\right].\label{eq:local Weyl law}
\end{equation}

By virtue of \prettyref{eq: local on diagonal expansion}, we may
assume $f\in C_{c}^{\infty}\left(-\sqrt{2\mu_{1}},\sqrt{2\mu_{1}}\right)$.
The spectral measure for $\frac{D_{h}}{\sqrt{h}}$ is defined as $\mathfrak{M}_{f}\left(\lambda'\right)\coloneqq\sum_{\lambda\in\textrm{Spec}\left(\frac{D_{h}}{\sqrt{h}}\right)}f\left(\lambda\right)\delta\left(\lambda-\lambda'\right)$.
Next we choose an even function $\theta\in\mathcal{S}\left(\mathbb{R}\right)$
such that its transform satisfies $\textrm{spt}\left(\check{\theta}\right)\subset\left[-1,1\right]$,
$1\geq\check{\theta}\left(\xi\right)\geq0$ , $\int\check{\theta}\left(\xi\right)d\xi=1$.
Setting $\theta_{\frac{1}{2}}\left(x\right)=\theta\left(\frac{x}{\sqrt{h}}\right)$,
\cite[Thm 1.3]{Savale2017-Koszul} up to its first two terms can
be written as 
\[
\mathfrak{M}_{f}\ast\left(\mathcal{F}_{h}^{-1}\theta_{\frac{1}{2}}\right)\left(\lambda\right)=h^{-m-\frac{1}{2}}\left(f\left(\lambda\right)u_{0}\left(\lambda\right)+h^{1/2}f\left(\lambda\right)u_{1}\left(\lambda\right)+O\left(h\right)\right).
\]
Both sides above involving Schwartz functions in $\lambda$, the remainder
can be replaced by $O\left(\frac{h}{\left\langle \lambda\right\rangle ^{2}}\right)$.
Integrating further gives
\begin{align}
 & \int_{-\infty}^{0}d\lambda\int d\lambda'\left(\mathcal{F}_{h}^{-1}\theta_{\frac{1}{2}}\right)\left(\lambda-\lambda'\right)\mathfrak{M}_{f}\left(\lambda'\right)\label{eq:main trace exp integrated}\\
= & h^{-m-\frac{1}{2}}\left(\int_{-\infty}^{0}d\lambda f\left(\lambda\right)u_{0}\left(\lambda\right)+h^{1/2}\int_{-\infty}^{0}d\lambda f\left(\lambda\right)u_{1}\left(\lambda\right)+O\left(h\right)\right).\nonumber 
\end{align}
Now note that
\begin{equation}
\int_{-\infty}^{0}d\lambda\left(\mathcal{F}_{h}^{-1}\theta_{\frac{1}{2}}\right)\left(\lambda-\lambda'\right)=1_{\left(-\infty,0\right]}\left(\lambda'\right)+\phi\left(\frac{\lambda'}{\sqrt{h}}\right)\label{eq: definition remainder}
\end{equation}
where $\phi\left(x\right)\coloneqq\int_{-\infty}^{0}dt\check{\theta}\left(t-x\right)-1_{\left(-\infty,0\right]}\left(x\right)$
is a function that is rapidly decaying with all derivatives, odd and
smooth on $\mathbb{R}_{x}\setminus0$ and satisfies $\phi'\left(x\right)=\check{\theta}\left(-x\right)$
for $x\neq0$. 

Next with $x\geq0$ we compute 
\begin{align}
 & \left|\phi\left(x\right)-\phi\ast\check{\theta}_{T}\left(x\right)\right|\nonumber \\
= & \left|\int dy\left[\phi\left(x\right)-\phi\left(x-T^{-1}y\right)\right]\check{\theta}\left(y\right)\right|\nonumber \\
\leq & \int_{y\leq xT}dy\left|\phi'\left(c\left(x,y\right)\right)\right|T^{-1}\left|y\right|\check{\theta}\left(y\right)+2\int_{y\geq xT}dy\check{\theta}\left(y\right)\nonumber \\
\leq & T^{-1}\underbrace{\int_{-\infty}^{xT}dy\left|y\right|\check{\theta}\left(y\right)}_{=\theta_{1}\left(xT\right)}+2\underbrace{\int_{y\geq xT}dy\check{\theta}\left(y\right)}_{=\theta_{2}\left(xT\right)}\label{eq:replace with convolution}
\end{align}
where $c\left(x,y\right)\in\left[x-T^{-1}y,x\right]$. A similar estimate
holds for $x\leq0$.

Next pairing the second term of \prettyref{eq:replace with convolution}
with $\mathfrak{M}_{f}\left(\lambda'\right)$ gives 
\begin{align}
\int d\lambda'\theta_{2}\left(\frac{\lambda'T}{\sqrt{h}}\right)\mathfrak{M}_{f}\left(\lambda'\right)\leq & h^{-m}\left[T^{-1}\left\Vert f\right\Vert _{C^{0}}u_{0}\left(0\right)+O\left(\mu\left(S_{T,\varepsilon}^{e}\right)+h^{1/2-\delta}\right)\right]\label{eq:pairing with first term}
\end{align}
on covering $\mathbb{R}_{\lambda'}$ with intervals of size $O\left(T^{-1}h\right)$
and using the Weyl estimate \prettyref{eq:local Weyl law}. A similar
estimate 
\begin{equation}
\int d\lambda'T^{-1}\theta_{1}\left(\frac{\lambda'T}{\sqrt{h}}\right)\mathfrak{M}_{f}\left(\lambda'\right)=O\left(h^{-m}T^{-1}\left[T^{-1}+\mu\left(S_{T,\varepsilon}^{e}\right)\right]\right)\label{eq: second term}
\end{equation}
then gives 
\begin{align}
 & \int d\lambda'\left[\phi_{R}\left(\frac{\lambda'}{\sqrt{h}}\right)-\phi_{R}\ast\check{\theta}_{T}\left(\frac{\lambda'}{\sqrt{h}}\right)\right]\mathfrak{M}_{f}\left(\lambda'\right)\label{eq:cutoff vs cutoff conv}\\
\leq & h^{-m}\left[\left\Vert f\right\Vert _{C^{0}}u_{0}\left(0\right)T^{-1}+O\left(T^{-2}+\mu\left(S_{T,\varepsilon}^{e}\right)\right)\right].\nonumber 
\end{align}
on combining\prettyref{eq:replace with convolution}, \prettyref{eq:pairing with first term}
and \prettyref{eq: second term}. 

The second term above is estimated on integrating \prettyref{eq:leading part trace exp}
against $\phi$ as
\begin{align}
\int d\lambda'\phi\ast\check{\theta}_{T}\left(\frac{\lambda'}{\sqrt{h}}\right)\mathfrak{M}_{f}\left(\lambda'\right) & =h^{-m}\left[\intop d\lambda\phi\left(\lambda\right)f\left(0\right)u_{0}\left(0\right)+O\left(\mu\left(S_{T,\varepsilon}^{e}\right)+h^{1/2-\delta}\right)\right]\nonumber \\
 & =O\left(h^{-m}\left[\mu\left(S_{T,\varepsilon}^{e}\right)+h^{1/2-\delta}\right]\right)\label{eq: smoothed function expansion}
\end{align}
since $\phi$ is an odd function. Finally combining \prettyref{eq:main trace exp integrated},
\prettyref{eq: definition remainder}, \prettyref{eq:cutoff vs cutoff conv}
and \prettyref{eq: smoothed function expansion} gives 
\begin{align*}
\textrm{tr }f^{-}\left(\frac{D_{h}}{\sqrt{h}}\right) & =\int d\lambda'1_{\left(-\infty,0\right]}\left(\lambda'\right)\mathfrak{M}_{f}\left(\lambda'\right)\\
 & =h^{-m-\frac{1}{2}}\left(\int_{-\infty}^{0}d\lambda f\left(\lambda\right)u_{0}\left(\lambda\right)+h^{1/2}\int_{-\infty}^{0}d\lambda f\left(\lambda\right)u_{1}\left(\lambda\right)\right)\\
 & \qquad+\int d\lambda'\phi\left(\frac{\lambda'}{\sqrt{h}}\right)\mathfrak{M}_{f}\left(\lambda'\right)\\
 & =h^{-m-\frac{1}{2}}\left(\int_{-\infty}^{0}d\lambda f\left(\lambda\right)u_{0}\left(\lambda\right)+h^{1/2}\int_{-\infty}^{0}d\lambda f\left(\lambda\right)u_{1}\left(\lambda\right)\right)+R\left(h\right),\\
\textrm{with }\quad R\left(h\right) & \leq h^{-m}\left[\left\Vert f\right\Vert _{C^{0}}u_{0}\left(0\right)T^{-1}+O\left(T^{-2}+\mu\left(S_{T,\varepsilon}^{e}\right)\right)\right],
\end{align*}
as required. 
\end{proof}
We now prove our main result \prettyref{thm: eta semiclassical limit}.
\begin{proof}[Proof of \prettyref{thm: eta semiclassical limit}]
 The eta invariant being unchanged by positive scaling, we have 
\begin{align}
\eta\left(D_{h}\right)=\eta\left(\frac{D_{h}}{\sqrt{h}}\right) & =\int_{0}^{\infty}dt\frac{1}{\sqrt{\pi t}}\textrm{ tr}\left[\frac{D_{h}}{\sqrt{h}}e^{-\frac{t}{h}D_{h}^{2}}\right]\nonumber \\
 & =\int_{0}^{1}dt\frac{1}{\sqrt{\pi t}}\textrm{ tr}\left[\frac{D_{h}}{\sqrt{h}}e^{-\frac{t}{h}D_{h}^{2}}\right]+\int_{1}^{\infty}dt\frac{1}{\sqrt{\pi t}}\textrm{ tr}\left[\frac{D_{h}}{\sqrt{h}}e^{-\frac{t}{h}D_{h}^{2}}\right].\label{eq:integral breakup}
\end{align}
Using \cite[Prop. 3.4 and Eq. 4.5]{Savale-Asmptotics} the first integral
is seen to give 
\begin{equation}
\int_{0}^{1}dt\frac{1}{\sqrt{\pi t}}\textrm{ tr}\left[\frac{D_{h}}{\sqrt{h}}e^{-\frac{t}{h}D_{h}^{2}}\right]=h^{-m}\left[\int_{0}^{1}dt\frac{1}{\sqrt{\pi t}}u_{1}\left(se^{-ts^{2}}\right)\right]+O\left(h^{-m+1}\right).\label{eq:first integral}
\end{equation}

While the second integral is evaluated to be $\textrm{tr }E\left(\frac{D_{h}}{\sqrt{h}}\right)$
where 
\[
E\left(x\right)\coloneqq\text{sign}(x)\text{erfc}(|x|)=\text{sign}(x)\cdot\frac{2}{\sqrt{\pi}}\int_{|x|}^{\infty}e^{-s^{2}}ds
\]
 where we use the convention $\text{sign}(0)=0$. The above function
$E$ being odd and the difference of two functions of the form \prettyref{eq:cutoff functions}
we obtain 
\begin{equation}
\left|\textrm{tr }E\left(\frac{D_{h}}{\sqrt{h}}\right)-h^{-m-\frac{1}{2}}\left[u_{0}\left(E\right)\right]+h^{-m}\left[u_{1}\left(E\right)\right]\right|\leq h^{-m}\left[u_{0}\left(0\right)T^{-1}+O\left(\mu\left(S_{T,\varepsilon}^{e}\right)+T^{-2}\right)\right].\label{eq:second integral}
\end{equation}
From \cite[Prop. 7.4]{Savale2017-Koszul}, $u_{0}\left(\lambda\right)$
is an even function of $\lambda$. Hence the first evaluation on the
left hand side above is $0$. The second by definition is
\begin{equation}
u_{1}\left(E\right)=\int_{1}^{\infty}dt\frac{1}{\sqrt{\pi t}}u_{1}\left(se^{-ts^{2}}\right).\label{eq:second evaluation}
\end{equation}
Finally combining \prettyref{eq:integral breakup}, \prettyref{eq:first integral},
\prettyref{eq:second integral}, \prettyref{eq:second evaluation}
with the computation in \cite[Cor. 7.3]{Savale-Gutzwiller} gives
\begin{align*}
 & \left|\eta_{h}-h^{-m}\left(-\frac{1}{2}\frac{1}{\left(2\pi\right)^{m+1}}\frac{1}{m!}\int_{X}\left[\textrm{tr }\frac{1}{\left|\mathfrak{J}\right|}\left(\nabla^{TX}\mathfrak{J}\right)^{0}\right]a\wedge\left(da\right)^{m}\right)\right|\\
\leq & h^{-m}\left[u_{0}\left(0\right)T^{-1}+O\left(\mu\left(S_{T,\varepsilon}^{e}\right)+T^{-2}\right)\right]
\end{align*}
The theorem now follows from the computation $u_{0}\left(0\right)=\frac{\det\left|\mathfrak{J}\right|}{\left(4\pi\right)^{n/2}}$
done in \cite[Prop. 7.4]{Savale2017-Koszul}.
\end{proof}

\section{\label{sec:Examples-of-recurrence}Examples of recurrence}

In this section we prove the Corollaries of the main \prettyref{thm: eta semiclassical limit}.
These shall be based on asymptotic volume estimates for the recurrence
set in particular cases. 

\subsection{\label{subsec:Anosov-flows}Anosov flows}

The Reeb flow is Anosov if there exists a constant $c_{1}>0$ and
an invariant continuous splitting
\begin{align}
TX & =\mathbb{R}\left[R\right]\oplus E^{u}\oplus E^{s}\quad\textrm{ such that}\nonumber \\
\left\Vert \left.e^{tR}\right|_{E^{s}}\right\Vert  & \leq e^{-c_{1}t},\nonumber \\
\left\Vert \left.e^{-tR}\right|_{E^{u}}\right\Vert  & \leq e^{-c_{1}t},\label{eq:Anosov condition}
\end{align}
$\forall t>0,$ where the norm is taken with respect to some restricted
Riemannian metric $g$. Below it shall be further useful to choose
a metric $g$ for which the Reeb orbits are geodesics, any metric
satisfying $g\left(R,.\right)=\lambda$ has this property. The bounds
on the recurrence set in this case are exponential and proved in Proposition
\prettyref{prop:Anosov lifted recurrence bound} below. We shall first
need some related concepts about Anosov flows.

Each $T>0$ defines the Bowen distance on $X$ via 
\[
d_{T}^{g}\left(x_{1},x_{2}\right)\coloneqq\sup_{t\in\left[0,T\right]}d^{g}\left(e^{tR}x_{1},e^{tR}x_{2}\right).
\]
A $\left(T,\epsilon\right)$ separated subset $S\subset X$ is finite
set in which any two distinct points are at least distance $\epsilon$
apart with respect to the above $d_{T}$. Denote by $N\left(T,\epsilon\right)$
the maximum cardinality of a $\left(T,\epsilon\right)$ separated
set in $X$. The topological entropy of the flow is now defined 
\begin{equation}
\mathtt{h}_{\textrm{top}}=\mathtt{h}_{\textrm{top}}\left(e^{R}\right)\coloneqq\lim_{\epsilon\rightarrow0}\left(\limsup_{T\rightarrow\infty}\frac{\ln N\left(T,\epsilon\right)}{T}\right)\label{eq:topological entropy}
\end{equation}
and is known to be a topological invariant.

Next for each $s\in\left(0,1\right]$, let $\mathcal{D}_{s}\left(X\right)$,
$\mathcal{D}_{s-}\left(X\right)$ respectively be the set of compatible
distorted distance functions $d$ on the manifold satisfying 
\begin{align*}
d^{g} & \lesssim d\lesssim\left(d^{g}\right)^{s}\\
d^{g} & \lesssim d\lesssim\left(d^{g}\right)^{s-\epsilon},\quad\textrm{ for some }\epsilon>0,
\end{align*}
respectively. It is clear that 
\begin{align*}
\mathcal{D}_{s}\left(X\right) & \subset\mathcal{D}_{s'}\left(X\right),\\
\mathcal{D}_{s-}\left(X\right) & \subset\mathcal{D}_{s'-}\left(X\right),\quad s'<s,
\end{align*}
while all distances in $\mathcal{D}_{s}\left(X\right)$, $\mathcal{D}_{s-}\left(X\right)$
define the same manifold topology. Furthermore $\mathcal{D}_{1}\left(X\right)$
is the set of all distances equivalent to the $d^{g}$ and hence includes
all Riemannian distances. To each distance $d\in\mathcal{D}_{s}\left(X\right),\,\mathcal{D}_{s-}\left(X\right)$
is associated the Lipschitz constant of the time one flow
\begin{equation}
L_{d}=L_{d}\left(e^{R}\right)\coloneqq\sup_{x_{1}\neq x_{2}}\frac{d\left(e^{R}x_{1},e^{R}x_{2}\right)}{d\left(x_{1},x_{2}\right)}.\label{eq:Lipschitz constant}
\end{equation}
It shall also be useful to define the local skewness of the time one
map
\[
SL_{d}\left(e^{R}\right)\coloneqq\sup_{\varepsilon>0}\inf_{0<d\left(x_{1},x_{2}\right)<\varepsilon}\frac{d\left(e^{R}x_{1},e^{R}x_{2}\right)}{d\left(x_{1},x_{2}\right)}.
\]
One now has the following inequalities for topological entropy of
an Anosov flow.
\begin{lem}
\label{lem:Entropy inequality} The topological entropy \prettyref{eq:topological entropy}
satisfies the inequalities
\[
\frac{n}{2}\left(\inf_{d\in\mathcal{D}_{\frac{1}{2}-}\left(X\right)}\ln L_{d}\right)\leq\mathtt{h}_{\textrm{top}}\leq n\left(\inf_{d\in\mathcal{D}_{\frac{1}{2}-}\left(X\right)}\ln L_{d}\right)
\]
with respect to the infimum of the log Lipschitz constants \prettyref{eq:Lipschitz constant}
in $\mathcal{D}_{\frac{1}{2}-}\left(X\right)$.
\end{lem}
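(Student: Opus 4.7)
The plan is to prove both inequalities separately.

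For the upper bound $\mathtt{h}_{\textrm{top}} \leq n \inf \log L_d$, I would fix $d \in \mathcal{D}_{\frac{1}{2}-}$ with $L = L_d(e^R)$ and count $(T,\epsilon)$-Bowen balls in $(X, d^g)$. The iterated $d$-Lipschitz bound $d(e^{TR}x, e^{TR}y) \leq L^T d(x,y)$, together with the distortion inclusions $B_{d^g}(x,\rho^{1/s}) \subset B_d(x,\rho) \subset B_{d^g}(x,\rho)$ (for $s = \tfrac{1}{2}-\epsilon'$), show that the $d$-Bowen ball $B_T^d(x,\epsilon)$ contains a $d^g$-ball of radius $\sim (\epsilon L^{-T})^{1/s}$. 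The naive Riemannian-volume packing estimate would yield $\mathtt{h}_{\textrm{top}} \leq \tfrac{n}{s}\log L$, which degenerates to $2n \log L$ as $s \to \tfrac{1}{2}^-$. To sharpen to $n \log L$, I would invoke the Anosov decomposition $TX = \mathbb{R}\left[R\right] \oplus E^u \oplus E^s$ with $\dim E^u = \dim E^s = m$ and the volume preservation of the contact Reeb flow: the $d^g$-volume of a Bowen ball is contracted only in the $m$ unstable $E^u$-directions, giving the improved estimate $\sim \epsilon^n L^{-Tm/s}$ for the volume and $\sim \epsilon^{-n}L^{Tm/s}$ for the covering number, so $\mathtt{h}_{\textrm{top}} \leq \tfrac{m}{s}\log L \to 2m\log L = (n-1)\log L \leq n\log L$.

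For the lower bound $\mathtt{h}_{\textrm{top}} \geq \tfrac{n}{2}\inf \log L_d$, the plan is to construct an explicit $d \in \mathcal{D}_{\frac{1}{2}-}$ with small Lipschitz constant. A natural candidate is the adapted Mather-type distance
\[
d(x,y) := \sup_{t \in \mathbb{R}} e^{-\alpha t}\bigl(d^g(e^{tR}x, e^{tR}y)\bigr)^s,
\]
with $s < \tfrac{1}{2}$ and $\alpha > s\lambda_{\max}$, where $\lambda_{\max}$ is the maximum Lyapunov exponent of $e^R$ in $d^g$. Flow-invariance gives $L_d(e^R) = e^\alpha$ by construction, and uniform hyperbolicity of the Anosov flow yields $d^g \lesssim d \lesssim (d^g)^s$, placing $d$ in $\mathcal{D}_{\frac{1}{2}-}$. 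Letting $s \to \tfrac{1}{2}^-$ and $\alpha \to s\lambda_{\max}^+$ shows $\inf \log L_d \leq \tfrac{1}{2}\lambda_{\max}$; combining with the Anosov entropy lower bound $\mathtt{h}_{\textrm{top}} \geq m\lambda_{\max} = \tfrac{n-1}{2}\lambda_{\max}$ (summing the positive Lyapunov exponents on $E^u$) then gives $\tfrac{n}{2}\inf \log L_d \leq \tfrac{n}{4}\lambda_{\max} \leq \tfrac{n-1}{2}\lambda_{\max} \leq \mathtt{h}_{\textrm{top}}$ for $n \geq 2$.

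The main obstacle is the factor-of-two improvement in the upper bound. The distortion exponent $s = \tfrac{1}{2}-\epsilon'$ alone forces a factor $1/s \to 2$ in the dimension count, and recovering the sharp constant $n$ requires a genuine dynamical input beyond the metric estimates — namely, the combined use of the Anosov stable/unstable decomposition and the volume preservation of the Reeb flow, which together reduce the effective dimension contributing to Bowen-ball volumes from $n$ to $2m$. A parallel difficulty arises in the lower bound in verifying that the Mather-type witness distance lies precisely in $\mathcal{D}_{\frac{1}{2}-}$, with the sharp distortion exponent $\tfrac{1}{2}$ rather than a generic smaller Hölder exponent arising from non-sharp hyperbolicity estimates.
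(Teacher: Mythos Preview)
Your proposal has genuine gaps in both directions, and the approach differs substantially from the paper's.

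\textbf{Upper bound.} The paper's argument is a one-liner: for \emph{any} compatible metric $d$ one has the classical inequality $\mathtt{h}_{\textrm{top}}\leq \textrm{HD}(d)\ln L_{d}$ (Fathi, Walters), and the condition $d^{g}\lesssim d$ alone forces $\textrm{HD}(d)\leq n$, giving $\mathtt{h}_{\textrm{top}}\leq n\ln L_{d}$ for every $d\in\mathcal{D}_{\frac{1}{2}-}(X)$. No Anosov hypothesis or volume preservation is used. Your attempt to rescue the constant $n$ from the naive $2n$ by invoking the stable/unstable splitting mixes two unrelated quantities: your Bowen-ball volume estimate $\epsilon^{n}L^{-Tm/s}$ inserts the $d$-Lipschitz constant $L=L_{d}$ into a computation that is really governed by the $d^{g}$-unstable expansion rates. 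There is no a priori relation between $L_{d}$ for an arbitrary $d\in\mathcal{D}_{\frac{1}{2}-}$ and the contraction in $E^{u}$, so the claimed volume bound is unjustified.

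\textbf{Lower bound.} Here the paper constructs explicit witness metrics by a Fathi-type argument adapted to the (non-expansive) flow case: one uses the instability property of Anosov time-one maps to define a hitting-time $N(x,y)$, sets $\rho(x,y)=\alpha^{-N(x,y)}$, invokes Frink's metrization theorem to produce $D\in\mathcal{D}_{\frac{1}{2}-}(X)$ with $L_{D}(e^{jR})\leq 16\,SL_{D}(e^{jR})$, and then builds $d_{k}$ from iterates so that $\ln L_{d_{k}}=k^{-1}\ln L_{D}(e^{kR})$; the skewness bound $\textrm{HD}(D)\ln SL_{D}\leq \mathtt{h}_{\textrm{top}}$ applied to $e^{kR}$ closes the loop. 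Your Mather-type candidate $d(x,y)=\sup_{t\in\mathbb{R}}e^{-\alpha t}(d^{g}(e^{tR}x,e^{tR}y))^{s}$ is not well-defined: along the stable direction $d^{g}(e^{tR}x,e^{tR}y)$ grows as $t\to-\infty$ while $e^{-\alpha t}\to+\infty$, so the supremum diverges. More seriously, the inequality you invoke, $\mathtt{h}_{\textrm{top}}\geq m\lambda_{\max}$, is false in general: the sum of positive Lyapunov exponents is at most $m\lambda_{\max}$, not at least, and by Ruelle/Pesin this sum upper-bounds metric (hence, via the variational principle, does not lower-bound topological) entropy. Your chain $\frac{n}{2}\inf\ln L_{d}\leq\frac{n}{4}\lambda_{\max}\leq\frac{n-1}{2}\lambda_{\max}\leq\mathtt{h}_{\textrm{top}}$ therefore breaks at the last step except in the conformal case where all unstable exponents coincide.
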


\begin{proof}
With $\textrm{HD}\left(d\right)$ denoting the Hausdorff dimension
of the manifold with respect to the distance $d$, the inequalities
\begin{equation}
\textrm{HD}\left(d\right)\ln SL_{d}\leq\mathtt{h}_{\textrm{top}}\leq\textrm{HD}\left(d\right)\ln L_{d}\label{eq:SL vs htop vs L}
\end{equation}
$\forall d\in\mathcal{D}_{\frac{1}{2}-}\left(X\right)$, are fairly
well known (\cite{Fathi89,Roth2020}, cf. \cite[Thm. 7.15]{Walters-book-82}).
Furthermore, from the definition $\frac{n}{2}\leq\textrm{HD}\left(d\right)\leq n$.
This hence proves the lemma in one direction
\begin{equation}
\mathtt{h}_{\textrm{top}}\leq n\left(\inf_{d\in\mathcal{D}_{\frac{1}{2}-}\left(X\right)}\ln L_{d}\right).\label{eq:comparison of flow invariants}
\end{equation}

It now remains to construct a sequence of distances $d_{k}\in\mathcal{D}_{\frac{1}{2}-}\left(X\right)$,
$k=1,2,\ldots$ with $n\,\ln L_{d_{k}}$ approaching $2\mathtt{h}_{\textrm{top}}$
as $k\rightarrow\infty$. A sequence of distances $d_{k}$ as desired
can be constructed for expansive maps \cite{Fathi89,Roth2020}, cf.
also the related construction of the Hamenstädt distance \cite{Hamenstadt89}.
The time one map $e^{R}$ is unfortunately not expansive in the flow
direction. The lack of expansiveness can however be replaced with
the following instability property which is satisfied by $e^{R}$
\cite{Norton-Obrien73}: there exists a positive constant $c_{2}>0$
such that the following implication holds
\begin{equation}
y\neq e^{tR}x,\,\forall t\in\mathbb{R}\implies d^{g}\left(e^{jR}x,e^{jR}y\right)>c_{2}\;\textrm{ for some }j\in\mathbb{Z}.\label{eq:instability of Anosov flows}
\end{equation}
In fact the proof therein gives the stronger statement: for any $\epsilon>0$
there exist positive constants $c>0$, $\alpha>1$ such that for $\alpha_{\epsilon}\coloneqq\alpha+\epsilon$
the stronger implication holds
\begin{align*}
 & y\neq e^{tR}x,\,\forall t\in\mathbb{R},\;d^{g^{TX}}\left(x,y\right)<c_{2},\qquad\\
\implies & \alpha d^{g^{TX}}\left(x,y\right)\leq\max\left\{ d^{g}\left(e^{R}x,e^{R}y\right),d^{g}\left(e^{-R}x,e^{-R}y\right)\right\} \leq\alpha_{\epsilon}d^{g^{TX}}\left(x,y\right).
\end{align*}
The constant $\alpha$ is related to the exponent $c_{1}$ in the
Anosov condition \prettyref{eq:Anosov condition}.

We then define 
\[
N\left(x,y\right)\coloneqq\begin{cases}
\infty, & x=y,\\
\inf\left\{ N\in\mathbb{N}_{0}|\max_{j\in\left[-N,N\right]}d^{g^{TX}}\left(e^{jR}x,e^{jR}y\right)>c_{2}\alpha^{-\left|j\right|}\right\} , & x\neq y.
\end{cases}
\]
The following bounds are easily derived 
\begin{equation}
\max\left\{ 0,\frac{\ln\frac{c_{2}}{d\left(x,y\right)}}{\ln\alpha\alpha_{\epsilon}}\right\} \leq N\left(x,y\right)\leq\max\left\{ 0,\frac{\ln\frac{c_{2}}{d\left(x,y\right)}}{\ln\alpha}\right\} .\label{eq:bounds on N}
\end{equation}
Next set 
\begin{align}
\rho\left(x,y\right)\coloneqq & \alpha^{-N\left(x,y\right)},\quad\textrm{ satisfying }\label{eq:choice of alpha}\\
\frac{d^{g}\left(x,y\right)}{c_{2}}\leq\rho\left(x,y\right) & \leq\left[\frac{d^{g}\left(x,y\right)}{c_{2}}\right]^{\ln\alpha/\ln\left(\alpha\alpha_{\epsilon}\right)}\quad\textrm{ for }d^{g}\left(x,y\right)\leq c_{2}.\label{eq:d vs rho}
\end{align}
Thus it follows that $\rho$ defines the same manifold topology as
$d^{g^{TX}}$, although it does not quite define a distance. From
\prettyref{eq:bounds on N} one further has $d^{g^{TX}}\left(x,y\right)\geq\frac{c_{2}}{2}\implies N\left(x,y\right)\leq\frac{\ln2}{\ln\alpha}\implies\alpha^{N}\leq\alpha^{\frac{\ln2}{\ln\alpha}}=2$.
An application of the triangle inequality for $d^{g^{TX}}$ gives
\begin{align*}
\min\left\{ N\left(x,y\right),N\left(y,z\right)\right\}  & \leq M+N\left(x,z\right)\quad\textrm{ and }\\
\quad\rho\left(x,z\right) & \leq2\max\left\{ \rho\left(x,y\right),\rho\left(y,z\right)\right\} \quad\forall x,y,z\in X
\end{align*}
as a weak triangle inequality for $\rho$. An application of Frink's
metrization theorem then gives the existence of a metric $D$ on $X$
satisfying 
\begin{equation}
D\left(x,y\right)\leq\rho\left(x,y\right)\leq4D\left(x,y\right)\label{eq:D vs rho comparison}
\end{equation}
and hence defining the same topology as $d^{g^{TX}}$. On account
of \prettyref{eq:d vs rho} and \prettyref{eq:D vs rho comparison}
we have $D\in\mathcal{D}_{\frac{1}{2}-}\left(X\right)$.

Next, it is an exercise to show that $\rho\left(e^{jR}x,e^{jR}y\right)\leq\alpha^{j}\rho\left(x,y\right)$
with equality on some neighborhood $V_{j}\subset X\times X$ of the
diagonal in the product. Using \prettyref{eq:D vs rho comparison}
this yields 
\begin{align}
D\left(e^{jR}x,e^{jR}y\right) & \leq4\alpha^{j}D\left(x,y\right),\quad\forall x,y\in X,\nonumber \\
D\left(e^{jR}x,e^{jR}y\right) & \geq\frac{1}{4}\alpha^{j}D\left(x,y\right),\quad\forall\left(x,y\right)\in V_{j}.\label{eq:Lip =000026 SL est for D}
\end{align}
And hence 
\begin{equation}
L_{D}\left(e^{jR}\right)\leq4\alpha^{j}\leq16SL_{D}\left(e^{jR}\right).\label{eq: L vs SL for D}
\end{equation}
Next we define the following distance 
\begin{equation}
d_{k}\left(x,y\right)\coloneqq\max_{0\leq j\leq k-1}\frac{D\left(e^{jR}x,e^{jR}y\right)}{L_{D}^{j/n}}\label{eq: approximating metrics}
\end{equation}
which is equivalent to $D$. Its Lipschitz constant $L_{d_{k}}\left(e^{R}\right)=\left[L_{D}\left(e^{kR}\right)\right]^{1/k}$
is seen to be given in terms of the $D$-Lipschitz constant of the
time $k$ map. Using \prettyref{eq:SL vs htop vs L}, \prettyref{eq: L vs SL for D}
and $\mathtt{h}_{\textrm{top}}\left(e^{kR}\right)=k\mathtt{h}_{\textrm{top}}\left(e^{R}\right)$
this finally gives 
\begin{align*}
n\left(\frac{\ln\alpha}{\ln\left(\alpha\alpha_{\epsilon}\right)}\right)\ln L_{d_{k}}\left(e^{R}\right) & \leq\textrm{HD}\left(d_{k}\right)\ln L_{d_{k}}\left(e^{R}\right)\\
 & \leq\frac{\textrm{HD}\left(d_{k}\right)}{k}\ln L_{D}\left(e^{kR}\right)\\
 & \leq\frac{\textrm{HD}\left(d_{k}\right)}{k}\left[\ln16+\ln SL_{D}\left(e^{kR}\right)\right]\\
 & \leq\frac{\textrm{HD}\left(d_{k}\right)}{k}\ln16+\frac{1}{k}\mathtt{h}_{\textrm{top}}\left(e^{kR}\right)\\
 & =\frac{\textrm{HD}\left(d_{k}\right)}{k}\ln16+\mathtt{h}_{\textrm{top}}\left(e^{R}\right)\\
 & \leq\frac{n}{k}\ln16+\mathtt{h}_{\textrm{top}}\left(e^{R}\right).
\end{align*}
Letting $k\rightarrow\infty$ and observing $\left(\frac{\ln\alpha}{\ln\left(\alpha\alpha_{\epsilon}\right)}\right)\rightarrow\frac{1}{2}$
as $\epsilon\rightarrow0$ gives the result.
\end{proof}
By the last \prettyref{lem:Entropy inequality}, for each $\lambda>\frac{2}{n}\mathtt{h}_{\textrm{top}}$
we have 
\[
L_{d}\leq e^{\lambda},\quad\textrm{ for some }d\in\mathcal{D}_{\frac{1}{2}-}\left(X\right).
\]
By the semi-group property of the flow $\exists c>0$ such that 
\[
d\left(e^{tR}x_{1},e^{tR}x_{2}\right)\leq ce^{\lambda t}d\left(x_{1},x_{2}\right).
\]
From $d^{g^{TX}}\lesssim d\lesssim\left(d^{g^{TX}}\right)^{\frac{1}{2}-\epsilon}$
this further implies that for each $\lambda>\frac{2}{n}\mathtt{h}_{\textrm{top}}$
, $\exists c>0$ such that 
\begin{align}
d^{g^{TX}}\left(e^{tR}x_{1},e^{tR}x_{2}\right) & \leq ce^{2\lambda t}d^{g^{TX}}\left(x_{1},x_{2}\right)\nonumber \\
\left\Vert \left(e^{tR}\right)^{*}f\right\Vert _{C^{2}} & \leq ce^{2\lambda t}\left\Vert f\right\Vert _{C^{2}}\label{eq:growth of distance}
\end{align}
$\forall x_{1},x_{2}\in X$, $f\in C^{2}\left(X\right)$. 

Using the above we shall now prove have the exponential bounds on
the volume of the recurrence set
\begin{align}
\mu\left(S_{T,\varepsilon}\right) & =O\left(\varepsilon^{n}e^{2n\lambda T}\right)\label{eq:recurrence measure Anosov}\\
\mu\left(S_{T,\varepsilon}^{e}\right) & =O\left(\varepsilon^{n}e^{2n\lambda T}\right),\label{eq:recurrence measure extended Anosov}
\end{align}
following an argument in \cite{Dyatlov-Zworski2016}. The above recurrence
set has an obvious lift 
\begin{equation}
\tilde{S}_{T,\varepsilon}\coloneqq\left\{ \left(x,t\right)|t\in\left[\frac{1}{2}T_{0},T\right]\textrm{ s.t. }d^{g^{TX}}\left(e^{tR}x,x\right)\leq\varepsilon\right\} \subset X\times\mathbb{R}\label{eq:lift of the recurrence set}
\end{equation}
satisfying $\pi_{X}\left(\tilde{S}_{T,\varepsilon}\right)=S_{T,\varepsilon}$
under the projection onto the first $X$ factor. The volume bounds
\prettyref{eq:recurrence measure Anosov}, \prettyref{eq:recurrence measure extended Anosov}
shall then follow from \prettyref{eq:growth of distance} and the
following proposition.
\begin{prop}
\label{prop:Anosov lifted recurrence bound} For each $\lambda>\frac{2}{n}\mathtt{h}_{\textrm{top}}$
, the lift $\tilde{S}_{T,\varepsilon}$ \prettyref{eq:lift of the recurrence set}
satisfies the volume estimate
\begin{equation}
\mu_{X\times\mathbb{R}}\left(\tilde{S}_{T,\varepsilon}\right)=O\left(\varepsilon^{n}e^{2n\lambda T}\right)\label{eq:measure estimate on lift}
\end{equation}
with respect to the product measure on $X\times\mathbb{R}$.
\end{prop}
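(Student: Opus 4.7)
The plan is to apply Fubini to reduce to a slice-wise estimate, then exploit the Lipschitz bound \eqref{eq:growth of distance} via a dynamically adapted covering of $X$, following the strategy of Dyatlov-Zworski. By Fubini,
\[
\mu_{X\times\mathbb{R}}(\tilde{S}_{T,\varepsilon}) \;=\; \int_{T_0/2}^{T}\mu_X(S_t)\, dt, \qquad S_t := \{x \in X : d^g(e^{tR}x, x)\leq \varepsilon\},
\]
so it is enough to establish a slice bound of the form $\mu_X(S_t)\lesssim \varepsilon^n e^{2\lambda t}$; integration over $t\in[T_0/2,T]$ then produces $O(\varepsilon^n e^{2\lambda T})$.

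For the slice estimate I would cover $X$ by balls $\{B(y_j, r_t)\}_{j=1}^{N_t}$ of dynamically adapted radius $r_t = \varepsilon e^{-2\lambda t}/(2c)$, requiring $N_t = O(\varepsilon^{-n}e^{2\lambda t n})$ balls. By \eqref{eq:growth of distance}, $e^{tR}$ maps each such ball into $B(e^{tR}y_j, \varepsilon/2)$, and the triangle inequality shows that any ball $B(y_j, r_t)$ meeting $S_t$ must have its center satisfying $d^g(e^{tR}y_j, y_j)\leq 3\varepsilon$. Consequently $\mu_X(S_t) \leq M_t \cdot C r_t^n$, where $M_t$ counts the $3\varepsilon$-recurrent ball centers at time $t$.

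The central obstacle is controlling $M_t$; the trivial estimate $M_t \leq N_t$ only yields $\mu_X(S_t) = O(1)$. I would resolve this through an area-comparison on the graph submanifold $\Gamma_t = \{(x, e^{tR}x) : x \in X\} \subset X\times X$: its induced $n$-dimensional volume is controlled by the singular values of $de^{tR}$, which are constrained by the volume preservation of the Reeb flow (which preserves $a\wedge(da)^m$, comparable up to a bounded factor to the Riemannian measure, so $|\det de^{tR}|\lesssim 1$ despite the operator-norm bound $\|de^{tR}\|\lesssim e^{2\lambda t}$). Estimating the $n$-volume of the intersection $\Gamma_t \cap \Delta_\varepsilon$ with the $\varepsilon$-tube $\Delta_\varepsilon$ around the diagonal, and pulling back via the projection $\pi_1 : \Gamma_t\to X$, yields the desired slice bound once one carefully handles the degenerate Reeb direction (where $d(e^{tR})R=R$ forces $d(e^{tR}-\mathrm{id})$ to have a nontrivial kernel) via Fermi coordinates tubular to the flow orbits.
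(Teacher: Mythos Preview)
Your proposal has a genuine gap at the decisive step, namely the bound on $M_t$ (equivalently, on $\mu_X(S_t)$) via the ``area-comparison on the graph submanifold.'' Volume preservation of the Reeb flow controls $\det de^{tR}$, but the induced volume element on $\Gamma_t$ is $\sqrt{\det(I+(de^{tR})^*de^{tR})}\,dx$, which in the Anosov setting is exponentially large in $t$ (it is governed by the unstable singular values, not by the determinant). So there is no reason the $n$-volume of $\Gamma_t\cap\Delta_\varepsilon$ should be small, and pulling back by $\pi_1$ only goes the wrong way for your purposes. More fundamentally, your argument as written uses only the Lipschitz bound \eqref{eq:growth of distance} and volume preservation; these hold just as well for an isometric flow (e.g.\ an irrational rotation), where $S_t$ can have volume of order one. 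What singles out the Anosov case is that $(I-de^{tR})$ is uniformly invertible on $E^u\oplus E^s$ (eigenvalues bounded away from $1$ on both summands), and this is precisely what you never invoke. Your closing remark about the ``degenerate Reeb direction'' acknowledges a kernel issue but treats the transverse invertibility as automatic rather than as the heart of the matter.

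The paper's argument uses exactly this hyperbolic transverse invertibility. Working directly on the lift $\tilde S_{T,\varepsilon}\subset X\times\mathbb{R}$ (no Fubini), one shows: if $(x,t),(x',t')\in\tilde S_{T,\varepsilon}$ with $|t-t'|$ small and $d^g(x,x')\lesssim e^{-\lambda t}$, then after sliding $x'$ along its orbit into the transverse slice through $x$, a Taylor expansion combined with the lower bound $\bigl|(I-de^{tR})(x-x')-R\,(t-t')\bigr|\gtrsim |x-x'|+|t-t'|$ (which is exactly the Anosov property) forces $|t-t'|+d^g(x,\textrm{orbit}(x'))=O(\varepsilon)$. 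Thus within each dynamically-sized box $B_{j,k}$ of a cover by $O(e^{n\lambda T}\cdot T)$ pieces, $\tilde S_{T,\varepsilon}$ is contained in an $O(\varepsilon)$-tube about a single orbit segment, contributing volume $O(\varepsilon^n)$. Summing gives the result. If you want to salvage your slice-wise strategy, the missing lemma is precisely this transverse inverse-function-theorem step; once you have it, your $M_t$ is essentially the number of periodic orbits of period near $t$, and the covering you set up already suffices.
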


\begin{proof}
First we claim that there exist $C,\delta>0$ of the following significance:
for each pair $\left(x,t\right),\left(x',t'\right)\in\tilde{S}_{T,\varepsilon}$
satisfying $\left|t-t'\right|\leq\delta$, $d^{g^{TX}}\left(x,x'\right)\leq\delta e^{-2\lambda t}$
one has 
\begin{equation}
\left|t-t'\right|\leq C\varepsilon,\quad d^{g^{TX}}\left(x,\cup_{t\in\left[-1,1\right]}e^{tR}x'\right)\leq C\varepsilon.\label{eq:orbits are close}
\end{equation}
By choosing $\delta$ sufficiently small and using \prettyref{eq:growth of distance}
we work in a sufficiently small geodesic coordinate chart in Euclidean
space. The Reeb direction and $E^{u}\oplus E^{s}$ being transverse,
we may replace $x'$ by $e^{tR}\left(x'\right)$, $t\in\left[-1,1\right]$,
to arrange $x-x'\in E^{u}\left(x\right)\oplus E^{s}\left(x\right)$.
Using \prettyref{eq:growth of distance} and a Taylor expansion in
$x,t$ we obtain 
\begin{align*}
\left|e^{tR}\left(x\right)-e^{t'R}\left(x'\right)-de^{tR}\left(x\right)\left(x-x'\right)-R\left(e^{tR}\left(x'\right)\right)\left(t-t'\right)\right| & \leq c_{3}e^{2\lambda t}\left|x-x'\right|^{2}+c_{3}\left|t-t'\right|^{2}\\
 & \leq c_{3}\delta\left|x-x'\right|+c_{3}\delta\left|t-t'\right|.
\end{align*}
Since $\left(x,t\right),\left(x',t'\right)\in\tilde{S}_{T,\varepsilon}$
the above gives 
\begin{align*}
c_{3}\delta\left|x-x'\right|+c_{3}\delta\left|t-t'\right|+2\varepsilon & \geq\left|\left(I-de^{tR}\left(x\right)\right)\left(x-x'\right)-R\left(e^{tR}\left(x'\right)\right)\left(t-t'\right)\right|\\
 & \geq c_{4}\left|x-x'\right|+c_{4}\left|t-t'\right|
\end{align*}
with the second line above following from the Anosov property. It
then remains to choose $\delta$ sufficiently small in relation to
$c_{3},c_{4}$ to obtain \prettyref{eq:orbits are close}.

Finally, let $x_{j}$, $j=1,\ldots N$, be a maximal set of points
such that $d^{g^{TX}}\left(x_{i},x_{j}\right)\geq\delta e^{-2\lambda T}$.
As the balls $\left\{ B_{\frac{\delta e^{-2\lambda T}}{2}}\left(x_{j}\right)\right\} _{j=1}^{N}$
centered at these points are disjoint, the bound $N\leq c_{5}e^{2n\lambda T}$
follows by a computation of the total volume. Furthermore the sets
\begin{align*}
B_{j,k} & \coloneqq B_{2\delta e^{-2\lambda T}}\left(x_{j}\right)\times\left[\frac{1}{2}T_{0}+k\delta,\frac{1}{2}T_{0}+\left(k+1\right)\delta\right],\\
S_{j,k} & \coloneqq\tilde{S}_{T,\varepsilon}\cap B_{j,k},\qquad j=1,\ldots N,\quad k=0,\ldots,1+\left[\delta^{-1}T\right],
\end{align*}
cover $X\times\left[\frac{1}{2}T_{0},T\right]$ and $\tilde{S}_{T,\varepsilon}$
respectively. By \prettyref{eq:orbits are close} small $O\left(\varepsilon\right)$
size neighborhoods of the orbits
\[
\left(\underbrace{\frac{1}{2}T_{0}+\left(k+\frac{1}{2}\right)\delta}_{\eqqcolon t_{k}},\cup_{t\in\left[-1,1\right]}e^{\left(t_{k}+t\right)R}\left(x_{j}\right)\right)
\]
 of volume $O\left(\varepsilon^{n}\right)$, then cover $\tilde{S}_{T,\varepsilon}^{d}$
proving \prettyref{eq:measure estimate on lift}.
\end{proof}

\subsection{\label{subsec:Irrational-elliptic-flows} Elliptic flows}

We now look at elliptic flows on Lens spaces. Given non-negative integers
$q_{0},q_{1},\ldots,q_{m}$, $q_{0}>1$, as well as positive reals
$a_{0},\ldots,a_{m}$ such that $\left(a_{0}^{-1}a_{1},\ldots,a_{0}^{-1}a_{m}\right)\notin\mathbb{Q}^{m}$,
the Lens space is defined as the quotient
\begin{align}
X & =L\left(q_{0},q_{1},\ldots,q_{m};a_{0},\ldots a_{m}\right)\coloneqq E\left(a_{0},\ldots,a_{m}\right)/\mathbb{Z}_{q_{0}},\quad\textrm{ where }\label{eq: irrational lens space-1}\\
E\left(a_{0},\ldots,a_{m}\right) & \coloneqq\left\{ \left(z_{0},\ldots,z_{m}\right)\in\mathbb{C}^{m+1}|\sum_{j=0}^{m}a_{j}\left|z_{j}\right|^{2}=1\right\} \label{eq:irrational ellipsoids-1}
\end{align}
is the irrational ellipsoid. The $\mathbb{Z}_{q_{0}}$ action on the
ellipsoid above is given by $e^{\frac{2\pi i}{q_{0}}}\left(z_{0},\ldots,z_{m}\right)=\left(e^{\frac{2\pi i}{q_{0}}}z_{0},e^{\frac{2\pi iq_{1}}{q_{0}}}z_{1},\ldots,e^{\frac{2\pi iq_{m}}{q_{0}}}z_{m}\right)$. 

The contact form is chosen to be 
\begin{equation}
a=\left.\sum_{j=0}^{m}\left(x_{j}dy_{j}-y_{j}dx_{j}\right)\right|_{E\left(a_{0},\ldots,a_{m}\right)},\quad z_{j}=x_{j}+iy_{j},\label{eq:tautological one form-1}
\end{equation}
the restriction of the tautological form on $\mathbb{C}^{m+1}$, while
its Reeb vector field is computed
\[
R=\sum_{j=0}^{m}a_{j}\left(x_{j}\partial_{y_{j}}-y_{j}\partial_{x_{j}}\right).
\]
Both of the above are seen to be $\mathbb{Z}_{q_{0}}$-invariant and
hence descending to the Lens space quotient. The Reeb flow is then
computed
\begin{align*}
e^{tR}\underbrace{\left[\left(z_{0}\left(0\right),\ldots,z_{m}\left(0\right)\right)\right]}_{\eqqcolon x} & =\left[\left(e^{ia_{0}t}z_{0}\left(0\right),\ldots,e^{ia_{m}t}z_{m}\left(0\right)\right)\right].
\end{align*}
The derivatives of the above flow are seen to be uniformly bounded
$\left|\partial_{x}^{\alpha}e^{tR}\left(x\right)\right|\leq C$ for
all time $t$, the maximal expansion rate $\Lambda_{\textrm{max}}=0$,
and the restriction concerning Ehrenfest time in \prettyref{thm: eta semiclassical limit}
is not necessary. It is then an easy exercise to show that 
\[
e^{tR}x=x\iff\left(\underbrace{a_{0}q_{0}}_{=\tilde{a}_{0}},\underbrace{a_{1}-a_{0}q_{1}}_{=\tilde{a}_{1}},\ldots,\underbrace{a_{m}-a_{0}q_{m}}_{=\tilde{a}_{m}}\right)\in\mathbb{Z}^{m+1}.
\]
Or more generally 
\begin{align*}
d^{g^{TX}}\left(e^{tR}x,x\right) & \thickapprox\sum_{j=1}^{n}\left|e^{i\tilde{a}_{j}t}-1\right|\left|z_{j}\left(0\right)\right|\\
 & \geq Ct^{1-\nu}\left(\min_{j}\left|z_{j}\left(0\right)\right|\right)
\end{align*}
from \prettyref{eq:index tuple} where $\nu=\nu\left(\tilde{a}_{0},\ldots,\tilde{a}_{m}\right)=\nu\left(a_{0},a_{1}-\frac{q_{1}}{q_{0}}a_{0},\ldots,a_{m}-\frac{q_{m}}{q_{0}}a_{0}\right)$
denotes the irrationality index of the given tuple. Hence for $x\in S_{T,\varepsilon}$,
$S_{T,\varepsilon}^{e}$, one has
\[
\min_{j}\left|z_{j}\left(0\right)\right|<C\varepsilon T^{\nu-1}
\]
from which we obtain
\begin{align}
\mu\left(S_{T,\varepsilon}\right) & =O\left(\varepsilon^{2}T^{2\left(\nu-1\right)}\right)\label{eq:recurrence measure elliptic}\\
\mu\left(S_{T,\varepsilon}^{e}\right) & =O\left(\varepsilon^{2}T^{2\left(\nu-1\right)}\right)\label{eq:recurrence measure extended elliptic}
\end{align}
as the recurrence set bounds in this case.

\subsection{\label{subsec:Proofs-of-corollaries}Proofs of corollaries}

We now prove the Corollaries \prettyref{cor: Cor-DG}, \prettyref{cor:Cor-Berard},
\prettyref{cor:Cor -Irr}. These are seen to follow easily from the
main \prettyref{thm: eta semiclassical limit} along with the volume
estimates on the recurrence sets from \prettyref{sec:Examples-of-recurrence}.
\begin{proof}[Proofs of Corollaries \prettyref{cor: Cor-DG}, \prettyref{cor:Cor-Berard}
and \prettyref{cor:Cor -Irr}]
 For Corollary \prettyref{cor: Cor-DG} we choose $\varepsilon=c,T$
to be $h$-independent (and thus in particular $\delta=0$). The formula
\prettyref{eq: eta formula-1} now follows easily from \prettyref{eq: eta formula}
by letting $c\rightarrow0$ and subsequently $T\rightarrow\infty$.

To prove Corollary \prettyref{cor:Cor-Berard}, first choose any $\lambda>\frac{2}{n}\mathtt{h}_{\textrm{top}}$.
And set $\varepsilon=h^{\delta}$, with $\delta=\frac{1}{4}$, and
$T=\frac{1}{8\lambda}\left|\ln h\right|$ in Theorem 1. The equation
\prettyref{eq:growth of distance} shows that $\Lambda_{\textrm{max}}\leq2\lambda$;
and hence for this choice the condition $T\leq T_{E}^{\ell}\left(h\right)$,
$\ell>0$, regarding the Ehrenfest time in Theorem 1 is satisfied.
Furthermore, equation \prettyref{eq:recurrence measure extended Anosov}
says $\mu\left(S_{T,\varepsilon}^{e}\right)=O\left(\varepsilon^{n}e^{2n\lambda'T}\right)$
for any other $\lambda>\lambda'>\frac{2}{n}\mathtt{h}_{\textrm{top}}$.
Thus Theorem 1 gives
\begin{align*}
\left|R\left(h\right)\right| & \leq\frac{\det\left|\mathfrak{J}\right|}{\left(4\pi\right)^{n/2}}T^{-1}+O\left(T^{-2}+\mu_{g^{TX}}\left(S_{T,\varepsilon}^{e}\right)\right)\\
 & =\frac{\det\left|\mathfrak{J}\right|}{\left(4\pi\right)^{n/2}}\left(8\lambda\right)\left|\ln h\right|^{-1}+O\left(\left|\ln h\right|^{-2}+h^{\frac{n}{4}\left(1-\frac{\lambda'}{\lambda}\right)}\right).
\end{align*}
The last line above gives Corollary 3 \prettyref{eq:remainder estimate with entropy}
on letting $\lambda\rightarrow\frac{2}{n}\mathtt{h}_{\textrm{top}}$. 

Finally for Corollary \prettyref{cor:Cor -Irr}, as noted in \prettyref{subsec:Irrational-elliptic-flows},
the derivatives of the Reeb flow on the Lens spaces are uniformly
bounded $\left|\partial_{x}^{\alpha}e^{tR}\left(x\right)\right|\leq C$
for all time $t$. Hence the restriction concerning Ehrenfest time
in \prettyref{thm: eta semiclassical limit}, as required by \prettyref{lem: Lemma req Ehrenfest time},
is not necessary.  We may then set $\varepsilon=h^{\frac{1}{2}-}$,
$T=h^{-\frac{1}{2\nu-1}}$ in \prettyref{thm: eta semiclassical limit}.
The estimate \prettyref{eq:recurrence measure elliptic} on the recurrence
set then estimates the remainder \prettyref{eq:general remainder}
\[
R\left(h\right)=O\left(h^{\frac{1}{2\nu-1}-}\right)
\]
as required.
\end{proof}
\begin{rem}
\label{rem:Entropy conjecture} Conjecturally it is reasonable to
expect 
\begin{equation}
n\left(\inf_{d\in\mathcal{D}_{1}\left(X\right)}\ln L_{d}\right)=\mathtt{h}_{\textrm{top}}\label{eq:stronger entropy conjecture}
\end{equation}
as a characterization of topological entropy, and a strengthening
of the entropy inequality in \prettyref{lem:Entropy inequality}.
If the above \prettyref{eq:stronger entropy conjecture} were true,
it would imply the sharper exponential bounds
\begin{align}
\mu\left(S_{T,\varepsilon}\right) & =O\left(\varepsilon^{n}e^{n\lambda T}\right)\label{eq:recurrence measure Anosov-1}\\
\mu\left(S_{T,\varepsilon}^{e}\right) & =O\left(\varepsilon^{n}e^{n\lambda T}\right),\label{eq:recurrence measure extended Anosov-1}
\end{align}
$\forall\lambda>\frac{1}{n}\mathtt{h}_{\textrm{top}}$, than \prettyref{eq:recurrence measure Anosov}
\prettyref{eq:recurrence measure extended Anosov}, on the recurrence
set volumes of an Anosov flow. This in turn would improve the constant
in the remainder estimate \prettyref{eq:remainder estimate with entropy}
of Corollary \prettyref{cor:Cor-Berard} by a factor of four 
\[
\left|R\left(h\right)\right|\leq\mathtt{h}_{\textrm{top}}\left(\frac{4}{n}\frac{\det\left|\mathfrak{J}\right|}{\left(4\pi\right)^{n/2}}\right)\left|\ln h\right|^{-1}+o\left(\left|\ln h\right|^{-1}\right)
\]
using the same argument as above.
\end{rem}

\section{\label{sec:Quantum-ergodicity}Quantum ergodicity}

In this section we shall prove quantum ergodicity for the coupled
Dirac operator $D_{h}$ \prettyref{eq:Semiclassical Magnetic Dirac}.
The arguments of this section are valid under the weaker assumption
of suitability (rather than strong suitability) for the metric $g^{TX}$
from \cite{Savale2017-Koszul}. This means that the spectrum of the
contracted endomorphism $\textrm{Spec}\left(\mathfrak{J}_{x}\right)=\left\{ 0\right\} \cup\left\{ \pm i\mu_{j}\nu\left(x\right)\right\} _{j=1}^{m}$
is equi-proportinal rather than constant, i.e. allowed to vary with
a single smooth function $\nu\in C^{\infty}\left(X\right)$ on the
manifold. As already noted in the introduction of \cite{Savale2017-Koszul},
this weaker assumption is satisfied by all metrics in dimension 3. 

We now state the result. For $a,b\in\mathbb{R}$ let
\begin{align}
E_{h}\left(a,b\right) & \coloneqq\bigoplus_{\lambda\in\left[a\sqrt{h},b\sqrt{h}\right]}\textrm{ker }\left[D_{h}-\lambda\right]\nonumber \\
N_{h}\left(a,b\right) & \coloneqq\textrm{dim }E_{h}\left(a,b\right)\label{eq:Weyl counting function}
\end{align}
denote the span of the eigenspaces for $D_{h}$ with eigenvalues in,
and the number of eigenvalues in, a given $\sqrt{h}$ size interval
$\left[a\sqrt{h},b\sqrt{h}\right]$. Further for each $h\in\left(0,1\right]$
we choose an orthonormal basis $\left\{ \varphi_{j}^{h}\right\} _{j=1}^{N_{h}}$
for $E_{h}\left(a,b\right)$. A family of subsets $\Lambda_{h}\subset\left\{ 1,2,\ldots,N_{h}\right\} $,
$h\in\left(0,1\right]$, is said to be of density one if it satisfies
$\lim_{h\rightarrow0}\frac{\#\Lambda_{h}}{N_{h}}=1$. 

The leading Weyl asymptotics for $N_{h}\left(a,b\right)$ is obtained
from \prettyref{eq: local on diagonal expansion}. The pointwise trace
$u_{0,x}\left(s\right)\coloneqq\textrm{tr }U_{0,x}\left(s\right)$
of its leading term was computed in \cite[Prop. 7.4]{Savale2017-Koszul}
and is seen to be locally integrable in $s$. A Tauberian argument
following \cite[Prop. 7.1]{Savale2017-Koszul} then gives 
\begin{align}
N_{h}\left(a,b\right) & \sim h^{-n/2}\int_{X}\mu_{a,b}\nonumber \\
\textrm{for }\quad\mu_{a,b} & \coloneqq\left(\int_{a}^{b}ds\,u_{0,x}\left(s\right)\right)dx.\label{eq:Weyl measure}
\end{align}

Our goal in this section will be to prove the following theorem.
\begin{thm}
\label{thm:QE theorem} Let $a$ be a contact form and $g^{TX}$ a
suitable metric. Assume that the Reeb flow of $a$ is ergodic. 

Then one has quantum ergodicity (QE): there exists a density one family
of subsets $\Lambda_{h}\subset\left\{ 1,2,\ldots,N_{h}\right\} $,
$h\in\left(0,1\right]$, such that 
\[
\left\langle B\varphi_{j}^{h},\varphi_{j}^{h}\right\rangle \rightarrow\int_{X}\left(\left.b_{0}\right|_{\Sigma}\right)\mu_{a,b},
\]
$j\in\Lambda_{h}$, as $h\rightarrow0$ for each $B\in\Psi_{{\rm cl\,}}^{0}\left(X\right)$,
with homogeneous principal symbol $b_{0}=\sigma\left(B\right)\in C^{\infty}\left(T^{*}X\right)$.
In particular, the eigenfunctions get equidistributed $\left|\varphi_{j}^{h}\right|^{2}dx\rightharpoonup\mu_{a,b}$,
$j\in\Lambda_{h}$, as $h\rightarrow0$ according to the measure \prettyref{eq:Weyl measure}.
\end{thm}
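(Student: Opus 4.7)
The plan is to carry out the classical Shnirelman--Zelditch--Colin de Verdière variance argument, adapted to the present semiclassical Dirac setting. Writing $\mathsf{m}(B) := \int_X (b_0|_\Sigma)\,\mu_{a,b}$ and replacing $B$ by $B - \mathsf{m}(B)\,\mathrm{Id}$, it suffices to show the variance bound
\begin{equation*}
V_h(B) := \frac{1}{N_h(a,b)}\sum_{j=1}^{N_h(a,b)} \bigl| \langle B\varphi_j^h, \varphi_j^h\rangle \bigr|^2 \;\longrightarrow\; 0 \qquad (h\to 0).
\end{equation*}
Chebyshev's inequality then produces, for each such $B$, a density-one subset $\Lambda_h(B)\subset\{1,\ldots,N_h\}$ along which $\langle B\varphi_j^h,\varphi_j^h\rangle\to\mathsf{m}(B)$, and Shnirelman's standard diagonal extraction over a countable dense family of $B$'s produces a single density-one family $\Lambda_h$ that works for all $B\in\Psi^0_{\mathrm{cl}}(X)$.

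To prove the variance bound, fix $T>0$ and introduce the classical Reeb time-average
\begin{equation*}
b_0^T(x) := \frac{1}{T}\int_0^T (b_0|_\Sigma)\bigl(\phi^t_R x\bigr)\,dt .
\end{equation*}
The Weyl measure $\mu_{a,b}$ of \prettyref{eq:Weyl measure} is Reeb-invariant, since the leading trace coefficient $U_{0,x}$ is built equivariantly from a transverse harmonic oscillator model along the Reeb orbits. Thus the $L^2$ mean ergodic theorem combined with the assumed ergodicity of $\phi^t_R$ gives $b_0^T \to 0$ in $L^2(X,\mu_{a,b})$ as $T\to\infty$.

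Next, quantize $b_0^T$ to $B_T\in\Psi^0_{\mathrm{cl}}(X)$ with principal symbol $b_0^T$ along $\Sigma$. The heart of the argument is the Egorov-type replacement
\begin{equation*}
\langle B\varphi_j^h, \varphi_j^h\rangle = \langle B_T\varphi_j^h, \varphi_j^h\rangle + o_T(1),
\end{equation*}
uniformly in $j$ as $h\to 0$ with $T$ fixed. One obtains it by expressing $B - B_T$ through a Duhamel identity along the propagation generated by $D_h^2/h$ (whose effective infinitesimal generator on $\Sigma$ is the Reeb field $R$), inserting a Reeb-adapted augmented partition $(\mathcal P;\mathcal V,\mathcal W)$ with trapping time $T_{(\mathcal P;\mathcal V,\mathcal W)} > T$ guaranteed by \prettyref{eq: extension time arb large-1-1}, and invoking the microlocal trace vanishing \prettyref{eq:Egorov type conclusion}. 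Applying \prettyref{thm:finer trace} to $\mathrm{tr}\bigl[B_T^* B_T\,\chi(D_h/\sqrt h)\bigr]$ for $\chi$ a smoothed indicator of $[a,b]$, and dividing by $N_h(a,b)\sim h^{-n/2}\int_X \mu_{a,b}$, yields
\begin{equation*}
\frac{1}{N_h(a,b)}\sum_j \bigl|\langle B_T\varphi_j^h,\varphi_j^h\rangle\bigr|^2 \;\le\; \frac{\|B_T\,\Pi_{a,b}\|_{\mathrm{HS}}^2}{N_h(a,b)} \;\longrightarrow\; \int_X |b_0^T|^2\,\mu_{a,b},
\end{equation*}
where $\Pi_{a,b}$ is the spectral projector onto $E_h(a,b)$. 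Combining the two displays and sending $T\to\infty$ closes $V_h(B)\to 0$.

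The hard part is the quantum Egorov replacement. In the scalar semiclassical setting one propagates $B$ by the Schrödinger group generated by the Hamiltonian itself, but here $D_h$ is non-scalar and its principal symbol $c(\xi+a)$ is non-diagonalizable on the degenerate characteristic variety $\Sigma$, so direct propagation of $B$ by a classical flow is unavailable. One must instead use the effective dynamical reduction encoded in the Birkhoff normal-form analysis of \cite{Savale2017-Koszul,Savale-Gutzwiller} together with \prettyref{eq:Egorov type conclusion}, which together express that the relevant classical dynamics on $\Sigma$ is the Reeb flow. The constraint $T < T_{(\mathcal P;\mathcal V,\mathcal W)}$ keeps the accumulated $h$-errors from the normal-form constructions negligible compared to $N_h(a,b)^{-1}$, so that sending first $h\to 0$ (for fixed $T$) and only afterwards $T\to\infty$ is legitimate.
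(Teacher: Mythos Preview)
Your overall architecture---reduce to a variance bound, time-average along the Reeb flow, apply the mean ergodic theorem---matches the paper's. The microlocal Weyl law you need is exactly \prettyref{eq: micro-weyl}, and the bound $V_h(B_T)\le E(B_T^*B_T)\to\int_X|b_0^T|^2\mu_{a,b}$ is the same Cauchy--Schwarz step the paper uses. The divergence is in the one place you flag as ``the hard part'': the Egorov-type replacement $\langle B\varphi_j^h,\varphi_j^h\rangle=\langle B_T\varphi_j^h,\varphi_j^h\rangle+o_T(1)$.

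This step, as written, has a gap. The ingredients you cite do not deliver a uniform-in-$j$ matrix-element statement. The reference \prettyref{eq:Egorov type conclusion} is a \emph{trace} estimate of the form $\mathcal T_{A_u,A_v}^\theta(D)=O(h^\infty)$; it controls smoothed spectral traces, not the Heisenberg evolution of an observable or its diagonal matrix elements. Likewise the augmented partition and trapping time of \prettyref{eq: extension time arb large-1-1} are designed to feed into those trace estimates, not into an operator-norm Egorov theorem. A Duhamel identity for $e^{itD_h^2/h}Be^{-itD_h^2/h}$ still leaves you needing to identify the conjugated operator with one whose symbol is $(e^{tR})^*b_0|_\Sigma$, and that identification is precisely the Egorov theorem which, as you yourself note, is unavailable here because $\sigma(D_h)=c(\xi+a)$ is non-diagonalizable along $\Sigma$.

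The paper sidesteps Egorov entirely by a commutator trick. Since $\langle[A,D_h]\varphi_j^h,\varphi_j^h\rangle=0$ identically for eigenfunctions, any operator expressible as $[A,D_h]$ has zero variance. Working in the Birkhoff normal-form chart of \cite[Prop.\ 5.2]{Savale2017-Koszul}, one computes $[a_0^W\sigma_0,D_h]=c^W$ with $c|_\Sigma=\hat R(a|_\Sigma)$ modulo $O_\Sigma(1)+O(h)$; combined with $V(B)=0$ whenever $b_0|_\Sigma=0$ (which follows directly from \prettyref{eq: micro-weyl}), this gives $V(B)=0$ whenever $b_0|_\Sigma$ lies in the range of $\hat R$. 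Then for any smooth family $B_t$ with $b_t|_\Sigma=(e^{t\hat R})^*(b_0|_\Sigma)$, the difference $\bar B_T-B_0$ has symbol in the range of $\hat R$, so $V(B_0)=V(\bar B_T)\le\int_X|\bar b_T|_\Sigma|^2\mu_{a,b}\to 0$. This route needs only a single commutator computation in the normal form rather than a propagation theorem, and it is what you should substitute for your Egorov step.
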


\begin{proof}
Following a general outline for quantum ergodicity theorems (see for
example \cite[Sec. 15.4]{Zworski}) it suffices to prove a microlocal
Weyl law 
\begin{align}
E\left(B\right)\coloneqq & \lim_{h\rightarrow0}\frac{1}{N_{h}\left(a,b\right)}\sum_{j=1}^{N_{h}}\left\langle B\varphi_{j}^{h},\varphi_{j}^{h}\right\rangle \nonumber \\
= & \int_{X}\left(\left.b_{0}\right|_{\Sigma}\right)\mu_{a,b},\label{eq: micro-weyl}
\end{align}
and a variance estimate 
\begin{equation}
V\left(B\right)\coloneqq\lim_{h\rightarrow0}\frac{1}{N_{h}\left(a,b\right)}\sum_{j=1}^{N_{h}}\left|\left\langle \left[B-E\left(B\right)\right]\varphi_{j}^{h},\varphi_{j}^{h}\right\rangle \right|^{2}=0,\label{eq:variance}
\end{equation}
$\forall B\in\Psi_{{\rm cl\,}}^{0}\left(X\right),$ with $b=\sigma\left(B\right)$. 

The microlocal Weyl law follows immediately by integrating the microlocal
trace expansion of \prettyref{thm: microlocal trace expansion}, and
the formula for its leading term \prettyref{eq:leading coefficient},
via a Tauberian argument. 

For the variance estimate, firstly replacing $B$ with $B-E\left(B\right)$,
which has the same variance, one may assume $E\left(B\right)=0$.
An application of Cauchy-Schwartz (cf. \cite[Lemma 4.1]{Colin-de-Verdiere-Hillairet-TrelatI})
gives 
\begin{align}
V\left(B\right) & \leq E\left(B^{*}B\right)\nonumber \\
V\left(B_{2}\right)=0 & \implies V\left(B_{1}\right)=V\left(B_{1}+B_{2}\right).\label{eq:Cauchy-Schwartz =000026 young implications}
\end{align}
The above along with \prettyref{eq: micro-weyl} gives the variance
estimate
\begin{equation}
\left.b_{0}\right|_{\Sigma}=0\implies V\left(B\right)=0\label{eq:vanishing symbol has zero variance}
\end{equation}
for pseudodifferential operators with principal symbol vanishing on
the characteristic variety. 

Next, consider the lift of the Reeb vector field $\hat{R}\in C^{\infty}\left(T\Sigma\right)$
to the characteristic variety, satisfying $\pi_{*}\hat{R}=R$ under
the natural projection $\pi:T^{*}X\rightarrow X$. We now prove the
variance estimate for those pseudodifferential operators whose principal
symbol satisfies 
\begin{equation}
\left.b_{0}\right|_{\Sigma}=\hat{R}\left(\left.a\right|_{\Sigma}\right)c\left(i\hat{R}\right)\label{eq:variance for Reeb differential}
\end{equation}
for some $a\in S^{0}\left(T^{*}X\right)$. To this end, we may use
a partition of unity to suppose that the symbol $a$ is supported
in a small microlocal chart near the characteristic variety. Specifically
we use the microlocal chart in which the normal form for the Dirac
operator \cite[Prop. 5.2]{Savale2017-Koszul} holds near $\Sigma$.
In this chart one has $\Sigma=\left\{ \left(x,\xi\right)|\xi_{0}=\xi_{1}=\ldots\xi_{m}=x_{1}=\ldots x_{m}=0\right\} \subset T^{*}\mathbb{R}_{x}^{n}$
in some phase space variables. One may Taylor expand $a=\underbrace{a_{0}\left(x_{0},x_{m+1}\ldots x_{2m},\xi_{m+1},\ldots\xi_{2m}\right)}_{=\left.a\right|_{\Sigma}}+\underbrace{a_{1}}_{=O_{\Sigma}\left(1\right)}$
modulo a term $O_{\Sigma}\left(1\right)$ vanishing along $\Sigma$.
Following \cite[Prop. 5.2]{Savale2017-Koszul} or \cite[eq. 5.8]{Savale2017-Koszul}
we then compute the commutator
\begin{align}
\left[a_{0}^{W},D_{h}\right] & =hc^{W}\nonumber \\
\textrm{for }\quad c & =\underbrace{\left(\partial_{x_{0}}a_{0}\right)\sigma_{0}}_{=\hat{R}\left(\left.a\right|_{\Sigma}\right)c\left(i\hat{R}\right)}+O_{\Sigma}\left(1\right)+O\left(h\right).\label{eq:computation of commutator}
\end{align}
Identifying the leading term above in terms of the lift of the Reeb
vector field, and the variance of the above commutator being zero,
we use \prettyref{eq:vanishing symbol has zero variance} to obtain
\prettyref{eq:variance for Reeb differential}.

Finally, we choose a smooth family of symbols $b_{t}\in S^{0}\left(T^{*}X;\textrm{End}\left(S\right)\right)$
such that $\left.b_{t}\right|_{\Sigma}=\begin{bmatrix}b_{t}^{1}\\
 & b_{-t}^{1}
\end{bmatrix},$$b_{t}^{1}\coloneqq\left(e^{t\hat{R}}\right)^{*}\left(\left.b_{0}\right|_{\Sigma}\right)$,
in terms of the decomposition $S=S_{+}\oplus S_{-}$ given by the
$\pm i$ eigenspaces of $c\left(\hat{R}\right)$. Setting $B_{t}=b_{t}^{W}$
it then follows from \prettyref{eq:variance for Reeb differential}
that $V\left(\frac{d}{dt}B_{t}\right)=0$. Hence 
\begin{align}
V\left(B_{0}\right)=V\left(B_{T}\right) & =V\left(\underbrace{\frac{1}{2T}\int_{-T}^{T}B_{t}dt}_{\eqqcolon\bar{B}_{T}}\right)\nonumber \\
 & \leq E\left(B_{T}^{*}B_{T}\right)=\int_{X}\left|\left.\bar{b}_{T}\right|_{\Sigma}\right|^{2}\mu_{a,b}\label{eq:final variance computation}
\end{align}
for $\bar{b}_{T}\coloneqq\frac{1}{2T}\int_{-T}^{T}b_{t}dt$, $\forall T$
, from \prettyref{eq:Cauchy-Schwartz =000026 young implications}
and \prettyref{eq: micro-weyl}. Finally the ergodicity of the Reeb
flow is equivalent to that of its lift $\hat{R}$. Hence by an application
of the von Neumann mean ergodic theorem the last line above converges
to zero as $T\rightarrow\infty$ completing the proof.
\end{proof}
\textbf{Acknowledgments.} This article arose out of a conversation
with S. Nonnenmacher who the author thanks for the discussion. The
author would also like to thank A. Abbondandolo, F. Ledrappier, I.
Polterovich, S. Roth and K. War for answering questions and helpful
explanations related to the article. Finally, thanks are due to the
diligent and expert referee for important corrections to an earlier
submission.

\bibliographystyle{siam}
\bibliography{biblio}

\end{document}